\title{Online Newton Method for Bandit Convex Optimisation}
\author{
Hidde Fokkema\\
Korteweg-de Vries Institute for Mathematics\\
University of Amsterdam\\
\texttt{h.j.fokkema@uva.nl}
\And
Dirk van der Hoeven\\
Mathematical Institute\\
Leiden University\\
\texttt{dirk@dirkvanderhoeven.com}
\And
Tor Lattimore\\
Google DeepMind\\
\texttt{lattimore@google.com}
\And
Jack J. Mayo\\
Korteweg-de Vries Institute for Mathematics\\
University of Amsterdam\\
\texttt{jackjamesmayo@gmail.com}
}
\date{}
\definecolor{dkblue}{cmyk}{1,.54,.04,.19}
\setlist[enumerate,1]{%
    label={\normalfont\textit{(\alph*)}},
    itemsep=0pt,
    labelwidth=!,
    labelindent=10pt
}
\pgfplotsset{compat=1.11}
\let\epsilon\varepsilon
\newtheorem{theorem}{Theorem}
\newtheorem{lemma}[theorem]{Lemma}
\newtheorem{proposition}[theorem]{Proposition}
\newtheorem{corollary}[theorem]{Corollary}
\newtheorem{remark}[theorem]{Remark}
\theoremstyle{remark}
\theoremstyle{definition}
\newtheorem{definition}[theorem]{Definition}
\xpatchcmd\thmt@restatable{%
\csname #2\@xa\endcsname\ifx\@nx#1\@nx\else[{#1}]\fi
}{%
\ifthmt@thisistheone
\csname #2\@xa\endcsname\ifx\@nx#1\@nx\else[{#1}]\fi
\else
\csname #2\@xa\endcsname[{\textsc{restated}}]
\fi}{}{}
\newcommand{\constantCheck}{} 
\newcommand{\R}{\mathbb R}
\newcommand{\argmin}{\operatornamewithlimits{arg\,min}}
\newcommand{\explan}[1]{\stackrel{\text{\tiny \texttt{#1}}}}
\newcommand{\ip}[1]{\left \langle #1 \right \rangle}
\newcommand{\sip}[1]{\langle #1 \rangle}
\newcommand{\Reg}{\textup{\textrm{Reg}}}
\newcommand{\Fmax}{\textup{\textrm{F}}_{\max}}
\newcommand{\sphere}{\mathbb{S}}
\newcommand{\logs}{L}
\newcommand{\ball}{\mathbb{B}}
\newcommand{\norm}[1]{\left \Vert  #1 \right \Vert}
\newcommand\half[1]{\textstyle{\frac{#1}{2}}}
\newcommand\pip{\pi_{\scalebox{0.6}{+}}}
\newcommand{\ext}{\operatorname{e}}
\newcommand{\E}{\mathbb E}
\newcommand{\cF}{\mathscr F}
\newcommand{\sF}{\mathscr F}
\newcommand{\cN}{\mathcal N}
\newcommand{\tr}{\operatorname{tr}}
\newcommand{\zeros}{ \bm 0}
\newcommand{\bbP}{\mathbb P}
\newcommand{\lip}{\operatorname{Lip}}
\newcommand{\polylog}{\operatorname{polylog}}
\newcommand{\diam}{\operatorname{diam}}
\newcommand{\poly}{\operatorname{poly}}
\newcommand{\sind}{\bm{1}}
\renewcommand{\d}[1]{\operatorname{d}\!#1}
\newcommand{\id}{\mathds{1}}
\newcommand{\Ymax}{Y_{\max}}
\newcommand{\xopt}{x_{\star}}
\newcommand{\sReg}{\Reg^{\small s}}
\newcommand{\fReg}{\Reg^{\small f}}
\newcommand{\qReg}{\Reg^{\small q}}
\newcommand{\hqReg}{\Reg^{\small \hat q}}
\newcommand{\eventGaussian}{\textsc{e1}}
\newcommand{\eventCov}{\textsc{e2}}
\newcommand{\eventConc}{\textsc{e3}}
\newcommand{\eventFSquared}{\textsc{e4}}
\newcommand{\eventGood}{\textsc{e0}}
\newcommand{\TODO}[1]{%
\ifmmode
\text{\textcolor{red}{TODO: #1}}
\else
\textcolor{red}{TODO: #1}
\fi
}
\begin{document}

\maketitle

\begin{abstract}
We introduce a computationally efficient algorithm for zeroth-order bandit convex optimisation and
prove that in the adversarial setting its regret is at
most $d^{3.5} \sqrt{n} \polylog(n, d)$ with high probability where $d$ is the dimension and $n$ is the time horizon.
In the stochastic setting the bound improves to $M d^{2} \sqrt{n} \polylog(n, d)$ where $M \in [d^{-1/2}, d^{-1 / 4}]$ is a constant
that depends on the geometry of the constraint set and the desired computational properties.
\end{abstract}

\section{Introduction}
Bandit convex optimisation is the bandit version of the classical zeroth-order optimisation problem, which is both a fundamental problem in optimisation and has many
applications in operations research and beyond.
Bandit convex optimisation is framed as a game between a learner and an adversary where the learner plays actions in a set $K \subset \R^d$, which is 
assumed to be convex, compact and have a nonempty interior.
At the beginning of the game the adversary secretly chooses a 
sequence of convex functions $\ell_1,\ldots,\ell_n : K \to [0,1]$.
The learner and adversary then interact over $n$ rounds. In round $t$ the learner chooses an action $A_t \in K$, suffers loss $\ell_t(A_t)$ and observes
$\ell_t(A_t) + \varepsilon_t$ with the noise $\varepsilon_t$ assumed to be conditionally subgaussian (defined below).
The goal in bandit convex optimisation is to control the regret:
\begin{align*}
\Reg_n = \sup_{x \in K} \sum_{t=1}^n \left( \ell_t(A_t) - \ell_t(x)\right)\,.
\end{align*}
We present a computationally efficient algorithm for bandit convex optimisation. The principle challenge is that almost all of the machinery for optimisation is based
on access to gradients, but in our setting the learner only observes the value of the loss at a single point.
Our algorithm is based on the bandit version of online Newton step for \emph{stochastic unconstrained} bandit convex optimisation by \cite{LG23} with some crucial new ingredients: \textit{(1)} we modify the Minkowski projection proposed by \cite{mhammedi2022efficient}
to show that a bounded convex function on $K$ can be approximately extended to all of $\R^d$ in a such a way
that the function can still be queried via the bandit model 
and \textit{(2)} in the adversarial setting we employ an improved version of the restarting condition used by \cite{SRN21}. 
Our main results are the following regret bounds.

\begin{theorem}\label{thm:main}
There exists an algorithm such that with probability at least $1 - \delta$,
\begin{align*}
    \Reg_n \leq d^{3.5} \sqrt{n} \polylog(n, d, 1/\delta) \,.
\end{align*}
Furthermore, the algorithm is computationally efficient given a membership oracle for $K$.
\end{theorem}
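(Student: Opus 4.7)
My plan is to combine three ingredients flagged by the authors in the introduction. First, apply a modified Minkowski projection $\pi : \R^d \to K$ to lift each $\ell_t$ to a convex extension $\tilde \ell_t : \R^d \to \R$ that agrees with $\ell_t$ on $K$, is Lipschitz in a global sense, and can still be queried in the bandit model by actually playing $\pi(x) \in K$ and paying only a controlled additive penalty outside $K$. This reduces the constrained problem to a (near) unconstrained one, at the cost of an additive slack in the regret determined by the growth of the penalty away from $K$. Second, on these extensions run the bandit online Newton variant of \cite{LG23} using one-point spherical gradient estimators sampled from an ellipsoid governed by the inverse cumulative Hessian surrogate. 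Third, wrap the whole procedure in an improved restart rule in the spirit of \cite{SRN21} so that the adversarial character of the losses does not invalidate the locally quadratic surrogate.

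For the inner ONS analysis I would work with the quadratic surrogate $\hat\ell_t(x) = \hat g_t^\top (x - x_t) + \tfrac12 \eta (\hat g_t^\top (x - x_t))^2$ together with the adaptive preconditioner $H_t = \lambda I + \eta \sum_{s < t} \hat g_s \hat g_s^\top$. Standard log-determinant telescoping bounds the regret against these surrogates by $O(d \log(n) / \eta)$; the variance of the gradient estimator contributes an $O(\eta d^2 n)$ term; the smoothing and Minkowski-extension biases can be kept to $\tilde O(\sqrt n)$ by choosing a smoothing radius of order $1/\sqrt n$ and a penalty slope of order $\sqrt n$; and a Freedman-type martingale concentration converts the in-expectation bounds into high-probability ones at a $\polylog(n,d,1/\delta)$ cost. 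Tuning $\eta$ to balance the first two terms yields a per-epoch regret of order $d^{1.5}\sqrt{n}\,\polylog(n,d,1/\delta)$.

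The main obstacle, and the reason for the $d^{3.5}$ rather than the stochastic $d^{2}$ rate, is handling the fact that in the adversarial setting the accumulated quadratic surrogate need not lower-bound the true loss once the comparator leaves the implicit trust region or once $\log\det H_t$ has grown too much. I would address this by restarting whenever a monitor tied either to the growth of $\log\det H_t$ or to the observed excess loss fires; a potential argument on $\log\det H_t$ caps the number of epochs at $\polylog(n,d)$, and an extra factor from the per-epoch confidence radius accounts for the remaining $d^{2}$ gap, so that multiplying the per-epoch bound by the epoch count and adding the extension and smoothing biases gives the stated $d^{3.5}\sqrt{n}\,\polylog(n,d,1/\delta)$ rate. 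Getting the restart threshold to simultaneously cap the number of restarts and keep the within-epoch surrogate faithful is the delicate point; once this is achieved, computational efficiency is immediate, since each round needs only a Minkowski projection (reducible to one-dimensional bisection against the membership oracle), a Gaussian sample in $\R^d$, and a rank-one update to $H_t^{-1}$.
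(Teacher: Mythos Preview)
Your plan misses the central technical obstacle of the paper, and the surrogate you propose is not the right object for general bounded convex losses.

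First, the extension. You claim the Minkowski extension is ``Lipschitz in a global sense''; it is not, and cannot be. To agree with $\ell_t$ on $K_\epsilon$ while pushing the minimiser inside $K_\epsilon$, the extension must have slope $\Theta(1/\epsilon)=\Theta(\sqrt n)$ outside $K_\epsilon$ (the paper states this explicitly and Lemma~\ref{lem:lip} gives $\lip(f_t)\le 15d/\epsilon$). Consequently the observed loss $Y_t$ can be of order $\sqrt n$, and since $\norm{g_t}^2_{\Sigma_{t+1}}=\tilde O(dY_t^2)$, your claimed variance contribution ``$O(\eta d^2 n)$'' is unjustified: naively $\sum_t Y_t^2$ could be $\Theta(n^2)$. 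Controlling $\sum_{t=1}^n Y_t^2=\tilde O(n)$ is the heart of the argument and requires a new idea: the paper exploits that the extension has large \emph{curvature} near $\partial K_\epsilon$, so whenever the Gaussian puts mass outside $K_\epsilon$, the estimated Hessian blows up and $\Sigma_t$ contracts rapidly. This is made precise via a Poincar\'e-type inequality (Proposition~\ref{prop:poincare-ish} and Corollary~\ref{cor:poincare-ish}) that bounds $\E[v(X)^2]$ by $\tr(\Sigma\,\E[v''(X)])$, which then telescopes through the log-det potential (Lemma~\ref{lem:f-squared}). Your proposal contains no mechanism of this kind.

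Second, the surrogate $\hat\ell_t(x)=\hat g_t^\top(x-x_t)+\tfrac12\eta(\hat g_t^\top(x-x_t))^2$ is the Hazan--Levy exp-concave surrogate; it gives $\tilde O(\sqrt n)$ regret only for smooth and strongly convex losses, not for arbitrary bounded convex ones. The paper instead uses the optimistic surrogate $s_t(z)=\E[(1-\tfrac1\lambda)f_t(X)+\tfrac1\lambda f_t((1-\lambda)X+\lambda z)]$ from \cite{BEL16,LG23,L24}, together with its quadratic approximation $q_t$, and the analysis hinges on the fact that $s_t\le f_t$ and that $q_t$ approximates $s_t$ only on the focus region $\{x:\lambda\norm{x-\mu_t}_{\Sigma_t^{-1}}\lesssim 1\}$. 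This locality is exactly what forces the adversarial-setting restarts: the paper restarts not when $\log\det H_t$ grows (that number of events is already $O(d\log n)$ without any restart), but when the \emph{estimated surrogate regret} is sufficiently negative (line~\ref{line:restart}), because then the true regret is provably negative and the induction that keeps $x_\star$ in the focus region can be reset. Your restart trigger and your accounting for the $d^{3.5}$ factor (``per-epoch $d^{1.5}\sqrt n$ times polylog epochs plus a $d^2$ confidence-radius factor'') do not match this structure; in the paper the $d^{3.5}$ arises from $\gamma\Fmax/\eta$ with $\Fmax=d^5L^8$ driven by $\diam(K)^2/\sigma^2$ and the need for uniform (over $K_\tau$) concentration of $\hat s_t$ and $\hat q_t$.
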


The regret bound can be improved in the stochastic setting, where $\ell_t = \ell$ is the same in every round:

\begin{theorem}\label{thm:stoch}
There exists an algorithm such that in the stochastic setting with probability at least $1 - \delta$,
\begin{align*}
    \Reg_n \leq M d^{2} \sqrt{n} \polylog(n, d, 1/\delta) \,,
\end{align*}
where $M \in [d^{-1/2}, d^{-1/4}]$ is a constant that depends on the geometry of $K$ and the desired computational properties of the algorithm.
\end{theorem}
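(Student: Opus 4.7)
The plan is to reuse the online Newton step framework of Theorem~\ref{thm:main} but exploit $\ell_t = \ell$ to make two simplifications: the restart of ingredient (2) can be dropped entirely, and the gradient estimators are conditionally unbiased estimates of a single fixed target field $\nabla \hat\ell$ rather than a sequence of moving targets.

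First I would run ONS on the smoothed version of the Minkowski extension of $\ell$, with a smoothing at radius $r$ defining a one-point estimator $\gradt$ that is unbiased for $\nabla \hat\ell(A_t)$. Since there are no restarts, the iterates form a single trajectory and the standard ONS potential argument (as imported from \citep{LG23}) yields a surrogate regret bound of order $d \log n$ against the smoothed objective, provided the estimator's covariance is controlled along the trajectory.

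Next I would bound the gap between the surrogate and true regret. The smoothing bias contributes $O(rn)$. The stochastic fluctuation $\sum_t \ip{\gradt - \nabla\hat\ell(A_t), A_t - x}$ is a martingale difference sum whose conditional variance in the stochastic setting depends only on the fixed surrogate $\hat\ell$, so a Freedman or empirical-Bernstein-type concentration gives a $\sqrt{n}$-scale deviation with the estimator variance as the leading constant, rather than the pessimistic uniform bound one is forced to use after each restart in the adversarial proof. Balancing $r$ against the variance term then produces the claimed rate.

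The constant $M$ parametrises the choice of smoothing: if one precomputes a near-isotropic transformation of $K$ (for instance via a John ellipsoid), a spherical smoothing yields a gradient estimator with squared norm $O(d^2 / r^2)$ and hence $M = d^{-1/2}$; if only a membership oracle is permitted, one must use an ellipsoidal smoothing compatible with the Minkowski extension, which is less well-conditioned and inflates the variance to give $M = d^{-1/4}$. The main obstacle will be verifying that the Minkowski extension preserves the variance bound on the smoothed estimator uniformly along the ONS trajectory with high probability, since the extension is only approximate outside $K$ and the ellipsoidal smoothing can probe points that lie in this extension region.
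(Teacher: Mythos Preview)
Your proposal has the right coarse intuition---drop the restart, replace uniform concentration by pointwise concentration at the fixed minimiser---but the mechanism you describe does not match the paper and would not, as written, yield the stated rate. The paper does \emph{not} use a one-point gradient estimator with a smoothing radius $r$: it samples $X_t \sim \cN(\mu_t,\Sigma_t)$ and builds the importance-weighted surrogate $\hat s_t(z) = Y_t[1 - 1/\lambda + R_t(z)/\lambda]$, whose derivatives at $\mu_t$ give both a gradient estimate $g_t$ and a Hessian estimate $H_t$ that feeds directly into $\Sigma_t^{-1}$. Your sketch never explains where the Hessian for ONS comes from, and a plain one-point estimator combined with the classical smoothing-bias/variance balance (``balancing $r$ against the variance term'') is exactly the Flaxman--Kalai--McMahan approach that is known to give $n^{3/4}$-type rates for general bounded convex losses, not $\sqrt n$. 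The $\sqrt n$ rate here comes from the quadratic-surrogate machinery of \cite{LG23,L24}, and in particular from the inequality $\E_{t-1}[f(X_t)] - f(x_\star) \lesssim q_t(\mu_t) - q_t(x_\star) + \lambda^{-1}\tr(q_t''(\mu_t)\Sigma_t)$ together with a log-determinant bound on $\sum_t \tr(\bar H_t \Sigma_t)$; there is no smoothing radius to balance.

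You also misidentify the constant $M$. It is not a property of the estimator (spherical versus ellipsoidal smoothing) but the mean width of the polar body $K^\circ$, i.e.\ a purely geometric constant fixed once $K$ has been placed in a suitable position; the $d^{-1/2}$ versus $d^{-1/4}$ dichotomy reflects whether one can efficiently position $K$ so that $dK^\circ$ is in John's position versus merely isotropic. Where $M$ actually enters the analysis is through the Poincar\'e-type bound (Corollary~\ref{cor:poincare-ish}) used to show $\sum_{t\le\tau} \E_{t-1}[Y_t^2] = \tilde O(n)$ despite the extension $f_t$ having Lipschitz constant $\Theta(1/\epsilon) = \Theta(\sqrt n)$; this is the central technical obstacle you flag in your last sentence but do not resolve, and it forces the constraint $\sigma M\sqrt d = \tilde O(1)$, which then propagates through the tuning of $\sigma,\lambda,\eta,\epsilon$ to produce the $Md^2\sqrt n$ bound. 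Your martingale-variance argument does not touch this issue, because the difficulty is not the fluctuation of $g_t$ around its mean but the size of $Y_t$ itself when $X_t \notin K_\epsilon$.
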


The computational properties depend on the desired value of $M$ and the properties of $K$. 
A sampling oracle for $K$ when queried returns a sample from the uniform distribution on $K$
and a membership oracle for $K$ when queried with input $x \in \R^d$ returns $\sind_K(x)$.
We have the following:
\begin{itemize}
\item Without any assumptions on $K$ you can take $M = d^{-1/2}$ but the algorithm may not be computationally efficient.
\item Given access to sampling and membership oracles for $K$ you can take $M = d^{-1/4}$ and the algorithm is efficient.
\item Given access to sampling and membership oracles for a symmetric $K$ you can take $M = d^{-1/2}$ and the algorithm is efficient.
\end{itemize}
Besides convexity and boundedness, no regularity assumptions on the losses are needed. 
By efficiency we mean polynomial time, though the (usually heavy) sampling oracle is only needed in the initialisation of the algorithm to place $K$ in
(approximately) isotropic position.

\paragraph{Notation}
Given a function $f: \R^d \to \R$ we write $f'(x)$ for its gradient at $x$ and $f''(x)$ for the Hessian.
The Lipschitz constant for $f$ is $\lip(f) = \sup_{x \neq y} (f(x) - f(y)) / \norm{x - y}$.
The directional derivative at $x$ in direction $\nu$ is $\partial f_\nu(x)$.
The ball of radius $r$ is $\ball(r) = \{x \in \R^d : \norm{x} \leq r\}$ and the sphere is $\sphere(r) = \{x \in \R^d : \norm{x} = r\}$.
The vector of all zeros is $\zeros$ and the identity matrix is $\id$, which
will always be of dimension $d$ and $d\times d$, respectively.
The standard euclidean norm is $\norm{\cdot}$. Given a square matrix $A$ we use the notation $\norm{x}^2_A = x^\top A x$. Be warned,
this is only a norm when $A$ is positive definite but occasionally we use the notation for random matrices $A$ that are only
positive definite in expectation.
The operator norm of a matrix $A$ is $\norm{A} = \max_{x \neq \zeros} \norm{Ax} / \norm{x}$.
We use $\bbP$ to refer to the probability measure on some measurable space carrying all the random variables associated with the
learner/environment interaction, including actions, losses, noise and any exogenous randomness introduced by the learner.
The associated expectation operator is $\E$.
We use $L$ to denote a universal logarithmic factor. More concretely,
\begin{align*}
L = C [1 + \log \max(n, d, 1/\delta)]\,,
\end{align*}
where $C$ is a suitably large universal constant.
We also assume that $\delta = O(\polylog(1 / n, 1 / d))$.
The Gaussian with mean $\mu$ and covariance $\Sigma$ is $\cN(\mu, \Sigma)$. All densities are with respect to the Lebesgue measure.

\paragraph{Related work}
For simplicity in this paragraph we ignore logarithmic factors.
The reader is referred to the recent notes by \cite{L24} for an extensive literature review and summary of the current Pareto frontier.
Bandit convex optimisation was initiated by \cite{FK05} and \cite{Kle04}.
For a long time there was speculation about how the minimax regret should depend on the horizon.
\cite{AFHK13} showed that $\poly(d) \sqrt{n}$ regret is possible in the stochastic setting 
where $\ell_t = \ell$ for some fixed (unknown) loss function $\ell$ and the responses are noisy.
Meanwhile in the adversarial setting $\sqrt{n}$ regret is possible using a kind of online Newton step when
the losses are assumed to be both smooth and strongly convex \citep{HL14}. When only boundedness is assumed, then
\cite{BDKP15} showed that $\sqrt{n}$ regret is possible in $d = 1$, which was extended to $\poly(d) \sqrt{n}$ for the general cases by \cite{BE18}.
These approaches are non-constructive. The bounds are established in a non-constructive fashion via Bayesian minimax duality.
The current state of the art is as follows: \textit{(1)} In the adversarial setting the best known bound is $d^{2.5} \sqrt{n}$ by \cite{Lat20-cvx},
who also used minimax duality and hence do not have an algorithm. \textit{(2)} For a polynomial time algorithm for the adversarial setting there
is the kernel-based algorithm by \cite{BEL16} for which the regret is $d^{10.5} \sqrt{n}$.
\textit{(3)} in the stochastic setting a version of the ellipsoid method obtains $d^4 \sqrt{n}$ regret \citep{LG21a,L24}.
\textit{(4)} in the unconstrained stochastic setting under a Lipschitz assumption there is a version of online Newton step for which the regret is $d^{1.5} \sqrt{n}$ \citep{LG23}.

\paragraph{Noise}
Most of the existing work on adversarial bandit convex optimisation does not consider the presence of noise and simply assumes that $\ell_t(A_t)$ is directly observed.
As far as we know all existing algorithms can handle noise with more-or-less no modifications. We include it for two reasons: \textit{(1)} to ensure
that the adversarial problem is a strict generalisation of the stochastic one; \textit{(2)} so that our algorithm can be used for bandit submodular minimisation
via the Lov\'asz extension.
Let $\tilde{Y}_t = \ell_t(A_t) + \epsilon_t$ be the realised loss in round $t$. 
Our assumption is that the noise is conditionally subgaussian: $\E[\epsilon_t |A_1, \tilde{Y}_1, \ldots, A_{t-1}, \tilde{Y}_{t-1}, A_t] = 0$ and $\E[\exp(\epsilon_t^2)|A_1, \tilde{Y}_1, \ldots, A_{t-1}, \tilde{Y}_{t-1}, A_t] \leq 2$.
Note that this is the Orlicz norm definition of subgaussian, which besides constants is equivalent to the definitions based on moment-generating functions 
or raw moments \citep{Ver18}.

\paragraph{Regularity of constraint set}
Remember that a convex body is a compact, convex subset of $\R^d$ with nonempty interior.
The polar body of $K$ is $K^\circ = \{u : \max_{x \in K} \ip{u, x} \leq 1\}$.
Since the only assumptions on the losses are convexity and boundedness, we can assume that $K$ has been suitably positioned, which simply means a change of coordinates
via an affine transformation. There are two questions: \textit{(1)} how $K$ should be positioned and \textit{(2)} can the corresponding affine transformation be found efficiently.
The (half) mean width of the polar body is
\begin{align*}
M(K^\circ) = \int_{\sphere(1)} \pi(x) \d{\rho}(x)\,,
\end{align*}
where $\pi(x) = \inf\{t > 0 : x \in t K\}$ is the Minkowski functional associated with $K$ and
$\rho$ is the rotationally invariant uniform probability measure on $\sphere(1)$. We let $M = \max(d^{-1/2}, M(K^\circ))$.
We need $K$ to be positioned so that $M(K^\circ)$ is as small as possible and
\begin{align}
\ball(1) \subset K \subset 2 \ball( d+1 )\,.
\label{eq:rounded}
\end{align}
There are various computational tradeoffs available. At present this is the current situation:
\begin{enumerate}
\item When $K$ is in isotropic position, then \cref{eq:rounded} holds with tighter constants \citep{kannan1995isoperimetric} and
$M(K^\circ) \leq 1$ holds trivially.
A convex body can be placed in approximate isotropic position with high probability by estimating the covariance of the uniform distribution on $K$, 
which can be done in polynomial time in a variety of ways \citep{lovasz2006simulated}.
This is a technique already employed by \cite{FK05} and we will not dive into details of the necessary approximation accuracy.

\item When $d K^\circ$ is in isotropic position, then \cref{eq:rounded} holds because polarity reverses inclusion.
The positive resolution to the slicing conjecture and Theorem 9.1.1 by \cite{brazitikos2014geometry} shows
that $M(K^\circ) = \tilde O(d^{-1/4})$. 
If $K$ is also symmetric, then \cite{milman2015mean} improved this bound to $M(K^\circ) = \tilde O(d^{-1/2})$.
$K$ can be positioned so that $d K^\circ$ is approximately in isotropic position using uniform samples from the polar body.
Since a membership/separation oracle for $K$ gives a separation/membership oracle for the polar body, standard sampling algorithms can be used
to find an affine transformation so that $d K^\circ$ is in approximately isotropic position.

\item Lastly, when $K$ is positioned so that $d K^\circ$ is in John's position, then \cref{eq:rounded} holds by John's theorem \citep[Remark 2.1.17]{ASG15}.
Furthermore, \cite{barthe1998extremal} proved that the mean width of $K^\circ$ in this position is upper bounded by the mean width of the standard simplex 
and \cite{finch2011mean} showed this is at most $\tilde O(d^{-1/2})$. Even with a separation/membership oracle there is no known algorithm for efficiently
positioning a convex body into John's position.
\end{enumerate}

Summarising, if we ignore computation complexity we can assume $M = \tilde O(d^{-1/2})$. 
If we need to find the position in a computationally efficient manner, then for symmetric $K$ we can take $M = \tilde O(d^{-1/2})$ and for general $K$
the best known bound is $M = \tilde O(d^{-1/4})$.

\paragraph{Distribution theory}
Let $h : \R^d \to \R$ be convex with $\lip(h) < \infty$ and $W$ have law $\cN(\zeros, \id)$ and $X$ have law $\cN(\mu, \Sigma)$ for $\mu \in \R^d$ and positive definite covariance and let $p$ be the density 
with respect to the Lebesgue measure of $\cN(\mu, \Sigma)$.
It is convenient to have an interpretation of $\E[h''(X)]$ even when $h$ is not twice differentiable.
There are at least two natural ways to define this quantity. 
The distributional view is to define
$\E[h''(X)] = \int_{\R^d} h(x) p''(x) \d{x}$, which is an equality using integration by parts when $h$ is twice differentiable.
Alternatively -- and equivalently -- a smoothing approach can be employed by defining $\E[h''(X)] = \lim_{\varrho \to 0} \E[h''_\varrho(X)]$ where $h_\varrho(x) = \E[h(x+\varrho W)]$.
Convexity ensures that $\E[h''(X)] \succeq \zeros$.
For convex $h, g$ we write $h'' \succeq g''$ if $\E[h''(X)] \succeq \E[g''(X)]$ for all non-degenerate Gaussian random elements $X$.

\section{Overview of the analysis}
Here we sketch the analysis and the main ideas involved in proving Theorems~\ref{thm:main} and \ref{thm:stoch}. 
Simply put, the main idea of this work is to design a surrogate loss that allows us to use an algorithm that is designed for $\R^d$ on any domain $K$. 
The unconstrained algorithm we use is a modified version of the algorithm by \citet{LG23}. Like \citet{LG23}, we sample an $X_t$ 
from a Gaussian distribution with mean $\mu_t \in K$ and covariance matrix $\Sigma_t$ and update $\mu_t$ and $\Sigma_t$ using the 
online Newton step algorithm \citep{HAK07} or, equivalently, the exponential weights algorithm with a Gaussian prior
\citep{HEK18}.
As is standard, we assume that $K$ is reasonably well rounded (either in John's position or isotropic position) and our algorithm plays
on $K_\epsilon = (1 - \epsilon) K$ where $\epsilon = \Theta(1/\sqrt{n})$.
The first step in our analysis is to introduce a bandit version of the extension proposed by \citet{mhammedi2022efficient}, which is a convex function $f_t : \R^d \to \R$
such that $f_t = \ell_t$ on $K_\epsilon$. Importantly, $f_t(x)$ can be evaluated by computing $\ell_t(\Pi(x))$ where 
$\Pi(x) = x/\max(1, \pi_\epsilon(x))$ with $\pi_\epsilon$ the Minkowski functional associated with $K_\epsilon$.
Let $x_\star = \argmin_{x \in K_\epsilon} \sum_{t=1}^n \ell_t(x)$. We show that for any $x \in \R^d$, 
\begin{align*}
     \ell_t(\Pi(x)) - \ell_t(x_\star) \leq f_t(x) - f_t(x_\star) \,.
\end{align*}
At least in the stochastic setting one might try to
apply the analysis of \citet{LG23} to the extended loss functions, who study unconstrained stochastic convex bandits. However, the reader may have already noticed 
that we have introduced a new challenge. \cite{LG23} assumed Lipschitz losses and proved a bound that depends polynomially on the norm of the minimiser and Lipschitz constant.
But the Lipschitz constant of the extension $f_t$ is $\Theta(\sqrt{n})$, which renders existing analysis vacuous.
At the same time, \cite{LG23} only studied the stochastic setting, so some new idea is needed to handle the adversarial setting.

\paragraph{Online Newton step}
Remember that online Newton step applied to a sequence of quadratic loss functions $(\hat q_t)_{t=1}^n$ plays
\begin{align*}
\mu_t = \argmin_{x \in K_\epsilon} \left[\frac{1}{2 \sigma^2} \norm{x}^2 + \eta \sum_{s=1}^{t-1} \hat q_s(x)\right] \,.
\end{align*}
Given $x \in K_\epsilon$ let
\begin{align*}
\hqReg_n(x) = \sum_{t=1}^n \left(\hat q_t(\mu_t) - \hat q_t(x)\right)
\end{align*}
be the regret associated with the quadratic losses $(\hat q_t)_{t=1}^n$.
This is bounded by
\begin{align}
\norm{x_{\tau+1} - x_\star}^2_{\Sigma_{\tau+1}} \leq -\eta \hqReg_\tau(x) + \frac{\diam(K)^2}{2  \sigma^2} + \frac{\eta^2}{2} \sum_{t=1}^\tau \norm{g_t}^2_{\Sigma_{t+1}}\,,
\label{eq:ons}
\end{align}
where $g_t = \hat q_t'(\mu_t)$ and $H_t = \hat q_t''(\mu_t)$ and $\Sigma_t^{-1} = \frac{1}{\sigma^2}\id + \eta \sum_{s=1}^{t-1} H_t$.
When applied to convex bandits it is usual to let $\hat q_t$ be an unbiased estimator of a quadratic approximation of $f_t$ and this is our approach as well.
In order to estimate $g_t$ and $H_t$ the algorithm needs to sample its meta action $X_t$ from a Gaussian $\cN(\mu_t, \Sigma_t)$.

\paragraph{Properties of quadratic surrogate}
Let $q_t = \E_{t-1}[\hat q_t]$ where $\E_{t-1}$ conditions on the history until the start of round $t$.
This function is close to $f_t$ in the sense that provided $\norm{x_\star - \mu_t}_{\Sigma_t^{-1}} \lesssim \frac{1}{\lambda}$ for user-defined constant $\lambda > 0$, then
\begin{align}
\E_{t-1}[f_t(X_t)] - f_t(x_\star) \lesssim
q_t(\mu_t) - q_t(x_\star) + \frac{1}{\lambda} \tr(q''_t(\mu_t) \Sigma_t) 
\label{eq:approx}
\end{align}
Log-determinant arguments and the definition of $\Sigma_t$ means that
\begin{align*}
\frac{1}{\lambda} \sum_{t=1}^n \tr(q''_t(\mu_t) \Sigma_t) = \tilde O\left(\frac{d}{\lambda}\right)\,.
\end{align*}
The parameter $\lambda$ must be balanced carefully. On the one hand, we would like it to be large to control the expression above.
On the other hand, it must be small enough that $\norm{x_\star - \mu_t}_{\Sigma_t^{-1}} \lesssim \frac{1}{\lambda}$ holds for all $t$ with high probability.

\paragraph{Challenge of large losses}
The main problem is that $g_t$ and $H_t$ depend on the magnitude of the extended loss function and when the meta algorithm plays outside of $K_\epsilon$ these
may not be bounded in $[0,1]$. More concretely, once the definitions of $g_t$ have been given, it will be straightforward to show that
\begin{align*}
\norm{g_t}^2_{\Sigma_{t+1}} = \tilde O(d Y_t^2)\,,
\end{align*}
where $Y_t$ is the loss of the extension that the learner observes in round $t$.
In normal circumstances for bounded loss functions $Y_t \in [0,1]$ but because the extension grows very rapidly outside of $K_\epsilon$, this need not be true.
Nevertheless, we are still able to show that $\sum_{t=1}^n Y_t^2 = \tilde O(n)$ with high probability.
The key idea is to use the fact that $f_t$ is minimised on $K_\epsilon$ and grows rapidly outside $K_\epsilon$.
Hence any sensible algorithm should not play often far outside of $K_\epsilon$.
The technical tools we use to prove this are quite interesting.
Since $f_t$ is bounded on $K_\epsilon$ and grows rapidly outside, it effectively has vary large curvature near the boundary of $K_\epsilon$
This means that if the distribution of $X_t$ has a high probability of playing outside $K_\epsilon$, then the estimated Hessian $H_t$ will be large in expectation
and consequentially $\Sigma_t$ will rapidly decrease. Formally we use a Poincar\'e-like inequality for convex functions (Proposition~\ref{prop:poincare-ish}).
Having established that $\sum_{t=1}^n Y_t^2 = \tilde O(n)$ with high probability, by \cref{eq:ons} and \cref{eq:approx} it follows that whenever 
$\norm{\mu_t - x_\star}_{\Sigma_t^{-1}} \lesssim \lambda^{-1}$ for all $t \leq \tau$, then
\begin{align}
\norm{\mu_{\tau+1} - x_\star}^2_{\Sigma_{t+1}^{-1}} \lesssim -\eta \hqReg_\tau(x_\star) + \frac{\diam(K)^2}{2 \sigma^2} + \tilde O\left(\frac{d}{\lambda} + n d \eta^2\right) = (\star)\,.
\label{eq:star}
\end{align}
For complicated reasons, $\sum_{t=1}^n Y_t^2 = \tilde O(n)$ can only be established if $\sigma M\sqrt{d} = \tilde O(1)$ with $M = \max(d^{-1/2}, M(K^\circ))$.
Notice we are now well placed to initiate an induction. Since the regret with respect to the quadratics cannot be too negative, the above bound can be used to simultaneously
bound the regret and prove by induction that $\norm{\mu_t - x_\star}_{\Sigma_t^{-1}} \lesssim \lambda^{-1}$ for all $t$.
All that is needed is to choose $\eta$, $\sigma$ and $\lambda$ so that 
\begin{align*}
(\star) = \tilde O\left(\frac{1}{\lambda^2}\right) \,.
\end{align*}
This is achieved by noting that under our regularity assumptions $\diam(K)^2 = O(d^2)$ and choosing 
\begin{align*}
\sigma &= \tilde \Theta\left(\frac{1}{M \sqrt{d}}\right) &
\lambda &= \tilde \Theta\left(\frac{1}{M d^{3/2}}\right) &
\eta &= \tilde \Theta\left(\frac{1}{\lambda} \sqrt{\frac{1}{nd}}\right) \,.
\end{align*}

\paragraph{Adversarial setting}
In the adversarial setting the regret can be negative, which means that \cref{eq:star} cannot be used anymore to drive an induction. 
At the same time, when the regret is negative we could simply restart the algorithm.
The challenge is to detect when to restart the algorithm. This idea has been explored before by \citet{HL16, BEL16, SRN21}. 
We develop a refined restarting mechanism, which is computationally more efficient and has a somewhat simplified analysis relative to prior work. 
We add negative bonuses to the potential of the algorithm, which has the same effect as increasing the learning rate used by \cite{BEL16, SRN21}.
The main reason for the degradation of our regret bound in the adversarial setting is that we need uniform concentration bounds for our surrogate loss estimates.

\paragraph{Summary}
At a very high level, our main technical contributions are as follows:
\begin{enumerate}
\item We show how to reduce constrained bandit convex optimisation to unconstrained bandit convex optimisation using a modification of the extension 
proposed by \cite{mhammedi2022efficient} for full information online learning.
\item The resulting unconstrained problem loses many of the nice properties that exist in the constrained setting. Notably, the loss function is not bounded and only barely
Lipschitz. We develop techniques for handling this issue in adversarial bandit problems.
\item We refine the restarting mechanism by \cite{SRN21} to be more computationally efficient and somewhat simplify its analysis.
\end{enumerate}

The remainder of the paper is organised as follows. We continue by first discussing the related work. After that we introduce the convex extension in Section~\ref{sec:convexextensions} and the surrogate loss in Section~\ref{sec:surrogateloss}. In Section~\ref{sec:algorithm} we describe our algorithm and in Section~\ref{sec:adv} we prove the regret bound for the algorithm.

\section{Convex extensions}\label{sec:convexextensions}
Like that of \cite{LG23}, our algorithm is really designed for unconstrained loss functions where the learner can play anywhere in $\R^d$.
In order to run this algorithm in the constrained setting we employ an extension of the constrained losses that can be evaluated in the bandit model.
The construction is a natural generalisation of extension proposed by \cite{mhammedi2022efficient} for the full information setting.
The main difference is that there the learner is assumed to have gradient access and the gradient is used to extend the function, while
in the bandit setting only (noisy) function values are available and these must be used instead. 
There is a minor problem that for loss functions that are not Lipschitz there need not be any (finite) convex extension at all.
Because of this we extend the loss from a subset of $K$ to all of $\R^d$.
Let $\pi(x) = \inf\{t > 0 : x \in tK\}$ be the Minkowski functional of $K$, which is a convex function such that $x \in K$ if and only if $\pi(x) \leq 1$.
Given $\epsilon \in (0,\frac{1}{2})$, let $\pip(x) = \max(1, \pi_\epsilon(x))$ where $\pi_\epsilon(x) = \pi(x)/(1-\epsilon)$ is the Minkowski functional of $K_\epsilon = (1 - \epsilon) K$.
The function $\pip$ is convex because it is the maximum of two convex functions. Moreover, $\pip$ is $2$-Lipschitz:

\begin{lemma}\label{lem:lipschitz}
$\sup_{\nu \in \sphere(1)} \partial_\nu \pip(x) \leq 2$ for all $x \in \R^d$.
\end{lemma}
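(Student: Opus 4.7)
}

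The plan is to bound the Lipschitz constant of $\pip$ directly and then conclude via the fact that any directional derivative of a convex function is at most its Lipschitz constant. The argument has three short steps.

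First, I would show that $\pi$ itself is $1$-Lipschitz using the rounding assumption $\ball(1) \subset K$ from \cref{eq:rounded}. Concretely, for any nonzero $x \in \R^d$ we have $x/\norm{x} \in \ball(1) \subset K$, so $\pi(x/\norm{x}) \leq 1$, and positive homogeneity of the Minkowski functional gives $\pi(x) \leq \norm{x}$. Combining with subadditivity of $\pi$ (which follows from convexity and positive homogeneity), for any $x,y \in \R^d$ we get $\pi(x) \leq \pi(y) + \pi(x-y) \leq \pi(y) + \norm{x-y}$, and symmetrically the other way, so $\pi$ is $1$-Lipschitz.

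Second, dividing by $1-\epsilon$ shows $\pi_\epsilon = \pi/(1-\epsilon)$ is $1/(1-\epsilon)$-Lipschitz, and since $\epsilon \in (0,\tfrac{1}{2})$ we have $1/(1-\epsilon) \leq 2$. Taking the pointwise maximum with the constant function $1$ cannot increase the Lipschitz constant (the max of two $L$-Lipschitz functions is $L$-Lipschitz), so $\pip = \max(1,\pi_\epsilon)$ is $2$-Lipschitz as well.

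Finally, for any $x \in \R^d$ and $\nu \in \sphere(1)$,
\begin{align*}
\partial_\nu \pip(x) = \lim_{t \downarrow 0} \frac{\pip(x+t\nu) - \pip(x)}{t} \leq \lip(\pip)\,\norm{\nu} \leq 2\,,
\end{align*}
which yields the claim. (The one-sided directional derivative exists at every point because $\pip$ is convex and finite-valued.) No step here looks like an obstacle; the only thing to be slightly careful about is that $\pi$ is defined by $\inf\{t>0: x \in tK\}$ and one must invoke convexity and $0 \in K$ to get positive homogeneity and subadditivity, both of which are standard properties of Minkowski functionals of convex bodies containing the origin.
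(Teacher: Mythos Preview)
Your argument is correct. Both your proof and the paper's hinge on the same inclusion $\ball(1)\subset K$ (equivalently $\ball(1-\epsilon)\subset K_\epsilon$) and reach the same conclusion that $\pi_\epsilon$ is $2$-Lipschitz, but the route differs in presentation. The paper identifies $\pi_\epsilon$ as the support function of $K_\epsilon^\circ$, so every subgradient of $\pi_\epsilon$ lies in $K_\epsilon^\circ$; then polarity reverses the inclusion $\ball(1-\epsilon)\subset K_\epsilon$ to give $K_\epsilon^\circ\subset\ball(1/(1-\epsilon))\subset\ball(2)$, bounding all subgradients in norm by $2$. You instead work directly with the Minkowski functional: $\pi(x)\leq\norm{x}$ from the inclusion, combined with subadditivity, yields $1$-Lipschitzness of $\pi$, and scaling by $1/(1-\epsilon)\leq 2$ finishes. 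Your approach is slightly more elementary in that it never names polar bodies or support functions; the paper's phrasing has the advantage of tying into the convex-geometric language used elsewhere (e.g.\ in the discussion of $M(K^\circ)$). Either way the content is the same.
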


\begin{proof}
The Minkowski functional $\pi_\epsilon$ is the support function of the polar body $K^\circ_{\epsilon}$ and hence the subgradients of $\pi_\epsilon(x)$ 
are in $K^\circ_\epsilon$, see, for example, Example 3.1.5 in \citep{YN18}.
Polarity reverses inclusion and by \cref{eq:rounded}, $\ball(1 - \epsilon) \subset K_\epsilon$ 
and therefore $K_\epsilon^\circ \subset \ball(2)$, using that $\epsilon \in (0,\frac{1}{2})$. 
Hence $\partial_\nu \pi_\epsilon(x) \leq 2$ for any $\nu \in \sphere(1)$. 
The result follows from the definition of $\pip(x) = \max(1, \pi_\epsilon(x))$.
\end{proof}
Let $\ell : K \to [0,1]$ be a convex function.
We are now in the position to define an extension of $\ell$ on $K_\epsilon$. Given $x \in \R^d$ define
\begin{align*}
\ext(x) = \pip(x) \ell\left(\frac{x}{\pip(x)}\right) + \frac{\pip(x) - 1}{\epsilon} \,.
\end{align*}
\begin{lemma}\label{lem:extend}
The function $\ext$ satisfies the following:
\begin{multicols}{2}
\begin{enumerate}
\item $\ext(x) = \ell(x)$ for all $x \in K_\epsilon$. \label{lem:extend:ext}
\item $\ext$ is convex on $\R^d$. \label{lem:extend:cvx}
\item $\partial_x \ext(x) \geq 0$ for all $x \notin K_\epsilon$. \label{lem:extend:opt}
\item $\frac{\pip(x) - 1}{\epsilon} \leq \ext(x) \leq 1 + \left(1 + \frac{1}{\epsilon}\right) [\pip(x) - 1]$. \label{lem:extend:bound}
\end{enumerate}
\end{multicols}
\end{lemma}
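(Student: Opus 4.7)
Parts \eref{(a)} and \eref{(d)} will be immediate from the definition. For \eref{(a)}, if $x \in K_\epsilon$ then $\pi_\epsilon(x) \leq 1$ so $\pip(x) = 1$, which collapses $\ext(x)$ to $\ell(x)$. For \eref{(d)}, since $\ell \in [0,1]$ and $\pip(x) \geq 1$, the lower bound follows from $\pip(x)\ell(x/\pip(x)) \geq 0$, and the upper bound from $\pip(x)\ell(x/\pip(x)) \leq \pip(x) = 1 + (\pip(x) - 1)$. For \eref{(c)}, when $x \notin K_\epsilon$ we have $\pip(x) = \pi_\epsilon(x) > 1$, and positive homogeneity gives $\pi_\epsilon((1+t)x) = (1+t)\pi_\epsilon(x)$ for $t \geq 0$. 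A direct substitution yields $\ext((1+t)x) - \ext(x) = t\pi_\epsilon(x)[\ell(x/\pi_\epsilon(x)) + 1/\epsilon]$; dividing by $t$ and letting $t \to 0^+$ shows $\partial_x \ext(x)$ equals this non-negative coefficient.

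The real work is \eref{(b)}. I will fix $x_1, x_2 \in \R^d$, $\lambda \in (0,1)$, and set $z = \lambda x_1 + (1-\lambda) x_2$, $s_i = \pip(x_i)$, $y_i = x_i/s_i \in K_\epsilon$, and $\sigma = \lambda s_1 + (1-\lambda)s_2$. Positive homogeneity and convexity of $\pi_\epsilon$ give $\pi_\epsilon(z) \leq \sigma$, and each $s_i \geq 1$ forces $\sigma \geq 1$, so $\sigma \geq \pip(z)$. Since $z/\sigma = (\lambda s_1/\sigma)y_1 + ((1-\lambda)s_2/\sigma)y_2$ lies in $K_\epsilon$, convexity of $\ell$ yields
\begin{align*}
\sigma \ell(z/\sigma) + (\sigma-1)/\epsilon
&\leq \lambda s_1 \ell(y_1) + (1-\lambda)s_2 \ell(y_2) + \lambda(s_1-1)/\epsilon + (1-\lambda)(s_2-1)/\epsilon \\
&= \lambda \ext(x_1) + (1-\lambda)\ext(x_2).
\end{align*}
Thus it suffices to show $\ext(z) \leq \sigma \ell(z/\sigma) + (\sigma-1)/\epsilon$, which reduces to proving that $F(s) := s\ell(z/s) + (s-1)/\epsilon$ is non-decreasing in $s$ on $[\pip(z), \infty)$.

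This monotonicity is where the main subtlety lies. Because $s \mapsto s\ell(z/s)$ is a slice of the jointly convex perspective of $\ell$, $F$ is convex, so it is enough to verify that its right-derivative is non-negative on the range where $y := z/s \in K_\epsilon$. For any subgradient $v \in \partial \ell(y)$, I will apply the convexity inequality at the point $y/(1-\epsilon)$, which lies in $K$ since $y \in K_\epsilon = (1-\epsilon)K$. This gives
\begin{align*}
\tfrac{\epsilon}{1-\epsilon} \ip{v,y} \leq \ell(y/(1-\epsilon)) - \ell(y) \leq 1,
\end{align*}
using $\ell \in [0,1]$, so $\ip{v,y} \leq (1-\epsilon)/\epsilon$. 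A routine subgradient computation for the perspective then shows $F(s') - F(s) \geq (s'-s)[\ell(y) - \ip{v,y} + 1/\epsilon] \geq s' - s$ whenever $s' > s$. In short, the linear penalty $(\pip(x)-1)/\epsilon$ is calibrated precisely to dominate the radial growth of $\ell$ along rays through the origin; without it the construction would fail. Combining the steps gives $\ext(z) = F(\pip(z)) \leq F(\sigma) \leq \lambda \ext(x_1) + (1-\lambda)\ext(x_2)$, establishing convexity.
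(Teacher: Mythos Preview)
Your proof is correct and follows essentially the same strategy as the paper's: both reduce convexity of $\ext$ to the joint convexity of the perspective $(x,s)\mapsto s\ell(x/s)$ together with a one-sided derivative bound on $s\mapsto s\ell(z/s)$ for $s\geq\pip(z)$, where the $1/\epsilon$ penalty is precisely what compensates for the radial behaviour. The differences are cosmetic: you prove full convexity directly for arbitrary convex combinations, whereas the paper shows midpoint convexity and invokes continuity; and you obtain the derivative bound via a subgradient of $\ell$ at $y/(1-\epsilon)\in K$, whereas the paper uses the secant between $s=\pi(z)$ and $s=\pip(z)$. Both exploit $\ell\in[0,1]$ on $K$ in the same way and yield the same conclusion.
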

\begin{remark}\label{rem:extend}
By part~\ref{lem:extend:opt}, for $x \in \partial K_\epsilon$ the function $t \mapsto \ext(t x)$ is non-decreasing for $t \geq 1$, which implies that
a minimiser of $\ext$ is always in $K_\epsilon$. Moreover, since differentiation is linear, this property carries over to sums of extensions.
\end{remark}

\begin{proof}[Lemma~\ref{lem:extend}]
Parts~\ref{lem:extend:ext}, \ref{lem:extend:opt}, and \ref{lem:extend:bound} are immediate from the definitions since $\pip(x) = 1$ whenever $x \in K_\epsilon$ and using the fact that the losses
are bounded in $[0,1]$.
For part~\ref{lem:extend:cvx}, let $J = \{(x, \lambda) : x \in \R^d, \lambda \geq \pi(x)\}$.
The function $g(x, \lambda) = \lambda \ell(x/\lambda)$ is jointly convex
on $J$ \citep[page 35]{Roc15}.
Let $z \in \R^d$. We claim that $\lambda \mapsto g(z, \lambda)$ is $\frac{1}{\epsilon}$-Lipschitz on $[\pip(z),\infty)$.
Since gradients are monotone for convex functions it suffices to check the gradient at $\lambda = \pip(z)$ and as $\lambda \to \infty$.
For the latter case, $\lim_{\lambda \to \infty} \frac{\d}{\d{\lambda}} g(z, \lambda) = \lim_{\lambda \to \infty} [\ell(z/\lambda) - \partial_x \ell(x/\lambda) / \lambda] 
= \ell(\zeros) \leq 1$.
For the former, 
\begin{align*}
\frac{\d{g(z, \lambda)}}{\d{\lambda}}\bigg|_{\lambda = \pip(z)}
&\explan{(1)}\geq \frac{g(z, \pi(z)) - g(z, \pip(z))}{\pip(z) - \pi(z)}   
= \frac{\pi(z) \ell\left(\frac{z}{\pi(z)}\right) - \pip(z)  \ell\left(\frac{z}{\pip(z)}\right)}{\pip(z) - \pi(z)} \\
&\explan{(2)}\geq - \frac{\pip(z)}{\pip(z) - \pi(z)} 
\explan{(3)}\geq - \frac{1}{\epsilon}\,,
\end{align*}
where \textit{(1)} follows from convexity of $\lambda \mapsto g(z, \lambda)$; \textit{(2)} from the fact that $\ell(x) \in [0,1]$ for all $x \in K$
and \textit{(3)} since $\pi_\epsilon(z) = \pi(z) / (1-\epsilon)$ and using the definition of $\pip(z) = \max(1, \pi_\epsilon(z))$.
Let $x, y \in \R^d$ and $z = \half{x} + \half{y}$. By convexity, $\half{1} \pip(x) + \half{1} \pip(y) \geq \pip(z)$ and hence
\begin{align}
g(z, \half{1} \pip(x) + \half{1} \pip(y))
\geq g(z, \pip(z)) - \frac{1}{\epsilon} \left[\half{1}\pip(x) + \half{1} \pip(y) - \pip(z)\right] \,.
\label{eq:extend:lip}
\end{align}
Combining \cref{eq:extend:lip} with joint convexity of $g$ shows that
\begin{align*}
\ext(z)
&= g(z, \pip(z)) + \frac{1}{\epsilon} \left[\pip(z) - 1\right] 
\leq g(\half{x} + \half{y}, \half{1} \pip(x) + \half{1} \pip(y)) + \frac{1}{\epsilon}\left[\half{1} \pip(x) + \half{1} \pip(y) - 1\right] \\
&\leq \half{1} g(x, \pip(x)) + \half{1} g(y, \pip(y)) + \frac{1}{\epsilon}\left[\half{1} \pip(x) + \half{1} \pip(y) - 1\right] 
= \half{1} \ext(x) + \half{1} \ext(y)\,.
\end{align*}
Since $\ext$ is continuous, midpoint convexity implies convexity and hence $\ext$ is convex \citep[Proposition 1.3]{simon2011convexity}.
\end{proof}

\paragraph{Extension and the regret} 
Let $f_t$ be the extension of $\ell_t$ defined almost analogously to $\ext$ above by
\begin{align*}
f_t(x) = \pip(x) \ell_t\left(\frac{x}{\pip(x)}\right) + \frac{2(\pip(x) - 1)}{\epsilon}\,.
\end{align*}
Note that $f_t(x)$ has an additional $v(x) = \frac{\pip(x) - 1}{\epsilon}$ term compared to $\ext(x)$, which is added to give a little nudge for the meta algorithm
to play inside $K$.
Since $v$ is convex, all properties of $\ext$ listed in Lemma~\ref{lem:extend} carry over to $f_t$, except for the upper bound on $\ext$ 
in Lemma~\ref{lem:extend}\ref{lem:extend:bound}, which carries over with a factor 2. 
Our learner will propose a meta-action $X_t \in \R^d$ sampled from a Gaussian distribution $\cN(\mu_t, \Sigma_t)$ with $\mu_t \in K_\epsilon$.
The actual action passed to the environment is
\begin{align*}
A_t &= X_t/\pip(X_t) \in K_\epsilon &
Y_t &= \pip(X_t)\left[\ell_t(A_t) + \epsilon_t\right] + 2 v(X_t) \,.
\end{align*}
Let $\sF_t = \sigma(X_1,{Y}_1,\ldots,X_t,{Y}_t)$ be the $\sigma$-algebra generated by the data observed at the end of round $t$.
Define $\bbP_t = \bbP(\cdot | \sF_{t})$ and $\E_t[\cdot] = \E[\cdot|\sF_{t}]$.
The next lemma (proof in Appendix~\ref{app:regret-reduction}) shows that with high probability the true regret is bounded in terms of the surrogate regret.

\begin{lemma}\label{lem:regret-reduction}
With probability at least $1 - \delta$, 
$\Reg_n \leq \sqrt{nL} + n \epsilon + \max_{x \in K_{\epsilon}} \fReg_n(x)$, where $\fReg_n(x) = \sum_{t=1}^n (\E_{t-1}[f_t(X_t)] - f_t(x))$
\end{lemma}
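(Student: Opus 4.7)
The plan is to decompose the regret into three pieces: (i) the cost of shrinking the comparator from $K$ to $K_\epsilon$, (ii) a bounded martingale-difference sum to which Azuma--Hoeffding applies, and (iii) the surrogate regret $\fReg_n(x)$ itself. The two key structural facts we use are that $f_t$ and $\ell_t$ agree on $K_\epsilon$ and that $f_t(X_t) \geq \ell_t(A_t)$ pointwise, both of which are immediate from the definitions.

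First I would handle the comparator. Because $\ball(1) \subset K$, the origin lies in both $K$ and $K_\epsilon = (1-\epsilon)K$. For any $x \in K$ the point $x' = (1-\epsilon)x$ lies in $K_\epsilon$, and by convexity and boundedness of $\ell_t$ in $[0,1]$,
\begin{align*}
\ell_t(x') = \ell_t((1-\epsilon) x + \epsilon \cdot \zeros) \leq (1-\epsilon)\ell_t(x) + \epsilon \ell_t(\zeros) \leq \ell_t(x) + \epsilon\,.
\end{align*}
Taking the supremum over $x \in K$ this yields $\Reg_n \leq n\epsilon + \sup_{x' \in K_\epsilon} \sum_{t=1}^n (\ell_t(A_t) - \ell_t(x'))$. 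For $x' \in K_\epsilon$, $\pip(x') = 1$ and $v(x') = 0$, so $f_t(x') = \ell_t(x')$; the comparator loss can be replaced by $f_t(x')$ at no cost.

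Next I would write, for $x' \in K_\epsilon$,
\begin{align*}
\sum_{t=1}^n \bigl(\ell_t(A_t) - f_t(x')\bigr) = \underbrace{\sum_{t=1}^n \bigl(\ell_t(A_t) - \E_{t-1}[\ell_t(A_t)]\bigr)}_{\text{(I)}} + \underbrace{\sum_{t=1}^n \E_{t-1}\bigl[\ell_t(A_t) - f_t(X_t)\bigr]}_{\text{(II)}} + \fReg_n(x')\,.
\end{align*}
The crucial observation for (II) is that $f_t(X_t) = \pip(X_t) \ell_t(A_t) + 2 v(X_t)$ with $\pip(X_t) \geq 1$, $\ell_t(A_t) \geq 0$ and $v(X_t) \geq 0$, so $f_t(X_t) \geq \ell_t(A_t)$ almost surely. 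Hence (II) $\leq 0$ term by term, and this holds without any need to worry about the very large values $f_t(X_t)$ can take when $X_t$ lies far outside $K_\epsilon$.

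Finally, (I) is a sum of martingale differences with respect to $(\sF_t)$ whose summands are bounded by $1$ in absolute value since $\ell_t(A_t) \in [0,1]$. Azuma--Hoeffding (with the definition $L = C[1+\log\max(n,d,1/\delta)]$) gives (I) $\leq \sqrt{nL}$ with probability at least $1-\delta$, on an event independent of the choice of comparator $x' \in K_\epsilon$. Combining the three pieces and taking the supremum over $x' \in K_\epsilon$ in the last term yields the claimed bound. There is no real obstacle in the argument; the main point is simply that the worry about $f_t(X_t)$ being unbounded is harmless here because $f_t(X_t) - \ell_t(A_t)$ has the right sign, so we never need concentration of $f_t(X_t)$ itself, only of the bounded quantity $\ell_t(A_t)$.
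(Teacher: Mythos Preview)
Your proof is correct and follows essentially the same route as the paper: Azuma--Hoeffding on the bounded sequence $\ell_t(A_t)$, the comparator shrink $K\to K_\epsilon$ at cost $n\epsilon$, and the pointwise inequality $f_t(X_t)\ge \ell_t(A_t)$ together with $f_t=\ell_t$ on $K_\epsilon$. The only cosmetic difference is that you give a self-contained convexity argument for $\ell_t((1-\epsilon)x)\le \ell_t(x)+\epsilon$, whereas the paper defers this step to an external proposition.
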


The rest of the article is focussed on bounding $\fReg_n$ with high probability.
The reader may already notice the principle challenge. In order for Lemma~\ref{lem:regret-reduction} to be useful we need $\epsilon = O(1/\sqrt{n})$.
But the Lipschitz constant of $f_t$ is more-or-less $O(1/\epsilon)$, which is then too large to use the analysis by \cite{LG23}.
As we mentioned, the key is to exploit the fact that $f_t$ is minimised on $K_\epsilon$ and also bounded in $[0,1]$ on $K_\epsilon$, which means any sensible algorithm should not
spend too much time playing outside of $K_\epsilon$ where the loss is not that well behaved.

\section{Surrogate loss}\label{sec:surrogateloss}
The algorithm keeps track of an iterate $\mu_t \in K_\epsilon$ and covariance matrix $\Sigma_t$. Let $p_t$ be the density
$\cN(\mu_t, \Sigma_t)$ and define the surrogate loss function $s_t : \R^d \to \R$ by
\begin{align*}
s_t(z) = 
\int_{\R^d} \left[\left(1 - \frac{1}{\lambda}\right) f_t(x) + \frac{1}{\lambda} f_t((1 - \lambda) x + \lambda z)\right] p_t(\d{x})\,.
\end{align*}
The surrogate $s_t$ and its quadratic approximation $q_t$ defined below have been used extensively in bandit convex optimisation
\citep{BEL16,LG21a,LG23}. Their properties are summarised in detail in a recent monograph \citep{L24}. For completeness we include the essential properties without
proof in Appendix~\ref{app:surrogate}.
The surrogate loss function is not directly observed, but can be estimated by
\begin{align*}
\hat s_t(z) &= Y_t\left[1 - \frac{1}{\lambda} + \frac{R_t(z)}{\lambda}\right] &
R_t(z) &= \frac{p_t\left(\frac{X_t - \lambda z}{1 - \lambda}\right)}{(1 - \lambda)^d p_t(X_t)} \,.
\end{align*}
Everything is nice enough that limits and integrals exchange so that $\E_{t-1}[\hat s'_t(z)] = s'_t(z)$ and
$\E_{t-1}[\hat s''_t(z)] = s''_t(z)$.
Explicit expressions for $\hat s'_t(z)$ and $\hat s''_t(z)$ are given in Appendix~\ref{app:surrogate}.
We let $g_t = \hat s'_t(\mu_t)$ and $H_t =  \hat s''_t(\mu_t)$ and 
\begin{align*}
\hat q_t(x) &= \ip{g_t, x - \mu_t} + \frac{1}{4}\norm{x - \mu_t}^2_{H_t} &
q_t(x) &= \ip{s'_t(\mu_t), x - \mu_t} + \frac{1}{4} \norm{x - \mu_t}^2_{s''_t(\mu_t)} \,.
\end{align*}

\section{Algorithm}\label{sec:algorithm}
The algorithm can be viewed as online Newton step or follow the regularised leader with quadratic loss estimators and a few bolt-on gadgets.
These gadgets are needed because the loss estimators are only well-behaved on a small region, which in the adversarial setting necessitates 
a restarting procedure. There are three interconnected parts that handle the behaviour of the loss estimators and restarting:
\begin{enumerate}
\item The algorithm computes its iterates in a decreasing sequence of sets $K_\epsilon = K_0 \supset K_1 \supset \cdots \supset K_n$. These are the sets on which
the surrogate loss function and its quadratic approximation are well-behaved.
\item In the adversarial setting, as the focus region shifts, the algorithm subtracts quadratics from the accumulation of losses. 
This procedure is inspired by \cite{zimmert22b}
and mimics the increasing learning rates used by \cite{BEL16} and \cite{SRN21}. We think the arguments are a little simpler but the outcome is the same.
Previous algorithms used volume arguments for deciding when to increase the learning rate, while our method is based on relatively standard algebraic considerations and is more practical to implement.
\item Lastly, in the adversarial setting the algorithm restarts if the regret with respect to estimated surrogate losses is sufficiently negative.
\end{enumerate}

\lstset{emph={or,to,for,then,end,for,if,input,else},emphstyle=\color{blue!100!black}\textbf}
\begin{algorithm}
\begin{spacing}{1.1}
\begin{lstlisting}[mathescape=true,escapechar=\&,numbers=left,xleftmargin=5ex]
input $n$, $\eta$, $\lambda$, $\sigma$, $\gamma$ and $K_0 = K_{\epsilon}$
for $t = 1$ to $n$
  let $\Phi_{t-1}(x) = \frac{1}{2\sigma^2} \norm{x}^2 + \sum_{u=1}^{t-1} \flat_u(x) + \eta \sum_{u=1}^{t-1} \hat q_u(x)$ 
  compute $\mu_t = \argmin_{x \in K_{t-1}} \Phi_{t-1}(x)$ and $\Sigma_t^{-1} = \Phi_{t-1}''(\mu_t)$
  sample $X_t \sim \cN(\mu_t, \Sigma_t)$
  play $A_t = \frac{X_t}{\pip(X_t)}$ and observe $Y_t = \pip(X_t)[\ell_t(A_t) + \epsilon_t] + 2 v(X_t)$
  $K_t =  K_{t-1} \cap \{x : \norm{x - \mu_t}^2_{\Sigma_t^{-1}} \leq \Fmax\}$
  if in the adversarial setting:
  compute $z_t = \argmin_{z \in \R^d} \sum_{s=1}^{t-1} \sind(\flat_s \neq \zeros) \norm{z - \mu_s}_{\Sigma_s^{-1}}^2 \label{line:z}$
  $\label{line:flat}\displaystyle \flat_t(x) = \begin{cases} 
  0 & \text{if } \sum_{s=1}^{t-1} \sind(\flat_s \neq \zeros) \norm{z_t - \mu_s}^2_{\Sigma_s^{-1}} \geq \frac{\Fmax}{24} \\
  -\gamma \norm{x - \mu_t}^2_{\Sigma_t^{-1}} & \text{if } \norm{\cdot}^2_{\Sigma_t^{-1}} \not \preceq \sum_{s=1}^{t-1} \sind(\flat_s \neq \zeros) \norm{\cdot}_{\Sigma_s^{-1}}^2 \\
  -\gamma \norm{x - \mu_t}^2_{\Sigma_t^{-1}} & \text{if } \norm{\mu_t - z_t}_{\Sigma_t^{-1}}^2 \geq \frac{\Fmax}{3} \\
  0 &\text{otherwise}\,.
  \end{cases}$
  if $\max_{y \in K_{t}} \eta \sum_{u=1}^t \left(\hat s_u(\mu_u) - \hat s_u(y)\right) \leq -\frac{\gamma \Fmax}{32}$ then restart algorithm $\label{line:restart}$
  end if 
end for
\end{lstlisting}
\end{spacing} 
\caption{}
\label{alg:cvx:adv}
\end{algorithm}

\paragraph{Computation}
The algorithm is reasonably practical, but some parts may be non-trivial to implement. We describe the key parts below:
\begin{enumerate}
\item Finding the iterate $\mu_t$ requires solving a constrained convex quadratic optimisation problem, which should be somewhat practical with Newton's method.
Note the constraints are defined by an intersection of ellipsoids, of which there could be up to $O(n)$ many. This can likely be reduced dramatically by some sort of doubling trick. In the stochastic setting one could only project on $K_{\epsilon}$ and the same regret bound would hold. 
\item Sampling from a Gaussian requires access to random noise and a singular value decomposition.
\item Finding $z_t$ is another convex quadratic program but this time unconstrained and hence straightforward.
\item Testing the conditions for adding a bonus requires an eigenvalue decomposition of a symmetric matrix.
\item As written in Algorithm~\ref{alg:cvx:adv}, testing the condition for a restart involves minimising a (possibly) non-convex function over the focus region. 
Fortunately, this optimisation does not need to be done exactly and the margin for error dominates the amount of non-convexity. 
In Appendix~\ref{app:opt_quad} we show that a suitable approximation can be found using convex programming by adding a quadratic to the objective
and optimising the resulting convex program. This procedure does not impact the regret.
\end{enumerate}

\paragraph{Constants}
The algorithm is tuned and analysed using a number of interconnected constants.
The correct tuning in the adversarial setting is
\begin{align*}
\lambda &= \frac{1}{d^3 L^5} &
\gamma &= \frac{1}{4d L}  &
\eta &= \sqrt{\frac{d}{nL^3}}  &
\sigma^2 &= \frac{1}{d^2} &
\epsilon &= \frac{d^{3.5} L^{8.5}}{\sqrt{n}} &
\Fmax &= d^5 L^8 \,.
\end{align*}
In the stochastic setting we choose
\begin{align*}
\gamma =  0 && \eta = \frac{M d}{\sqrt{n}} && \lambda = \frac{5}{Md^{3/2} L^3} && \sigma^2 = \frac{1}{16 M^2 d L^3} && \epsilon = \frac{M d^{2} L^5}{\sqrt{n}} && \Fmax = 25 M^2 d^3 L^5\,. 
\end{align*}

\newcommand{\optst}{x_{\star,t}^{\small s}}
\newcommand{\opthatst}{x_{\star,t}^{\small\hat{s}}}

\section{Proof of Theorems~\ref{thm:main} and \ref{thm:stoch}}\label{sec:adv}
We prove Theorems~\ref{thm:main} and \ref{thm:stoch} simultaneously, as both arguments use the same tools. We begin by introducing some 
notation and outlining our strategy.
By the definition of the algorithm the functions $\flat_t$ are quadratic and mostly vanish.
Let $\gamma_t = \gamma \sind(\flat_t \neq\zeros)$ and $m_t = \sum_{u=1}^t \sind(\flat_u \neq \zeros)$.
Let $w_t = (1 - 2\gamma)^{m_t}$, which in the stochastic setting is always $1$ and
in the adversarial setting we will prove it is always in $[1/2,1]$.
Later (in Section~\ref{sec:lem:cov}) we are going to prove that $\Sigma_{t}^{-1}$ satisfies
\begin{align*}
\Sigma_t^{-1} = w_{t-1}\Big[\frac{\id}{\sigma^2} + \eta \sum_{u=1}^{t-1} \frac{H_u}{w_u}\Big] \textnormal{ ~~and we define} && \bar H_t = \E_{t-1}[H_t] &&
\bar \Sigma_t^{-1} = w_{t-1}\Big[\frac{\id}{\sigma^2} + \eta \sum_{u=1}^{t-1} \frac{\bar H_u}{w_u}\Big].
\end{align*}
By Lemma~\ref{lem:unbiased}, $\bar H_t = s_t''(\mu_t)$ is the Hessian of a convex function and therefore positive definite.
Since $\E_{t-1}[H_t] = \bar H_t$ you might expect that $\Sigma_t^{-1}$ and $\bar \Sigma_t^{-1}$ are close and indeed this is the case with high probability.
We need notation for the optimal action for the various surrogate losses: 
\begin{align*}
x_{\star,t} &= \argmin_{x \in K_\epsilon} \sum_{u=1}^t f_u(x) & 
\optst &= \argmin_{x \in K_\epsilon} \sum_{u=1}^t s_u(x)  &
\opthatst &= \argmin_{x \in K_t} \sum_{u=1}^t \hat s_u(x) \,.
\end{align*}
Note the particular choices of domain. In the stochastic setting, we let $x_{\star,1} = x_{\star,2} = \cdots = \xopt$ because all $f_t$ are equal. The regret relative to the various surrogate loss estimates are
\begin{align*}
\fReg_\tau(x) &= \sum_{t=1}^\tau (\E_{t-1}[f_t(X_t)] - f_t(x)) &
\sReg_\tau(x) &= \sum_{t=1}^\tau (s_t(\mu_t) - s_t(x))  \\
\qReg_\tau(x) &= \sum_{t=1}^\tau (q_t(\mu_t) - q_t(x)) &
\hqReg_\tau(x) &= \sum_{t=1}^\tau (\hat q_t(\mu_t) - \hat q_t(x)) \,.
\end{align*}

\begin{definition}\label{def:tau}
We define a stopping time $\tau$ to be the first time that one of the following does not hold: 
\begin{enumerate}
\item In the adversarial setting: $x^s_{\star,\tau} \in K_{\tau + 1}$. In the stochastic setting: $\xopt \in K_{\tau + 1}$. 
\item $\frac{1}{2} \bar \Sigma_{\tau+1}^{-1} \preceq \Sigma_{\tau+1}^{-1} \preceq \frac{3}{2} \bar \Sigma_{\tau+1}^{-1}$. \label{def:tau:S}
\item The algorithm has not restarted at the end of round $\tau$. \label{def:tau:restart}
\end{enumerate}
In case none of these hold, then $\tau$ is defined to be $n$.
\end{definition}

Note that $\Sigma_{t+1}$ and the constraints defining $K_{t + 1}$ are $\sF_{t}$-measurable, which means that $\tau$ is a stopping time 
with respect to the filtration $(\sF_t)_{t=1}^n$. Also note that in the stochastic setting definition~\ref{def:tau}\ref{def:tau:restart} holds by definition of the algorithm. 
The plan is as follows:
\textit{(1)} Use concentration of measure to control all the random events that determine the trajectory of the algorithm. The rest of the analysis
consists of proving that the regret is small on the high-probability event that the algorithm is well-behaved.
\textit{(2)} Derive some basic consequences of the concentration analysis.
\textit{(3)} Prove that the regret is well-controlled until round $\tau$. In the stochastic setting our proof that the regret is well-controlled simultaneously 
implies that $\xopt \in K_{\tau+1}$, at which point the proof of Theorem~\ref{thm:stoch} concludes.
\textit{(4)} Prove that if $\tau \neq n$, then the algorithm restarts.
\textit{(5)} Prove that if the algorithm restarts, then the regret is negative.
By the end of all this and by Lemma~\ref{lem:regret-reduction} we will have established that with probability at least $1 - 15n^2 \delta$,
\begin{align*} 
\Reg_n \leq n \epsilon + \max_{x \in K_\epsilon} \fReg_n(x) \leq  n \epsilon + \begin{cases}
    \frac{\Fmax}{\eta} & \textnormal{ in the stochastic setting} \\
    \frac{\gamma \Fmax}{\eta} & \textnormal{ in the adversarial setting.}
\end{cases}
\end{align*}

\paragraph{Step 1: Concentration}
Because the surrogate estimates are not globally well-behaved we need to ensure that our algorithm behaves in a regular fashion.
This necessitates a moderately tedious concentration analysis, which we summarise here, deferring essential proofs for later. 
The main point is that we define a collection of events and prove they all hold with high probability. The remainder of the argument is carried forward
without probabilistic tools by restricting to sample paths on the intersection of the events defined here.
By a union bound and Lemma~\ref{lem:gaussian-norm}, with probability at least $1 - n\delta$,
\begin{align*}
\tag{\eventGaussian} \label{event:gaussian}
\forall ~t\leq \tau, \quad \norm{X_t - \mu_t}^2_{\Sigma_t^{-1}} \leq d L\,.
\end{align*}
Consider also the event \eventCov{} that
\begin{align*}
\tag{\eventCov}
\frac{1}{2} \bar \Sigma_{\tau+1}^{-1} \preceq \Sigma_{\tau+1}^{-1} \preceq \frac{3}{2} \bar \Sigma_{\tau+1}^{-1}\,.
\end{align*}
Note that by the definition of $\tau$ on \eventCov{} it also holds that $\frac{1}{2} \bar \Sigma_t^{-1} \preceq \Sigma_t^{-1} \preceq \frac{3}{2} \bar \Sigma_t^{-1}$ for
all $t \leq \tau$.
The following lemma shows that \eventCov{} occurs with high probability: 
\begin{lemma}\label{lem:cov}
   $\bbP(\eventCov) \geq 1 - 6n\delta$. 
\end{lemma}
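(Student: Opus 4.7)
The natural starting point is a telescoping identity. From the defining recursion $\Sigma_{t+1}^{-1} = (w_t/w_{t-1})\Sigma_t^{-1} + \eta H_t$ (and its analogue with barred quantities), one obtains
\begin{align*}
\Sigma_{t+1}^{-1} - \bar\Sigma_{t+1}^{-1} = \eta w_t \sum_{u=1}^{t} \frac{H_u - \bar H_u}{w_u}\,.
\end{align*}
Since $\bar H_u = \E_{u-1}[H_u]$ (Lemma~\ref{lem:unbiased}) and $w_u$ is $\sF_{u-1}$-measurable, the right-hand side is an $\sF$-adapted matrix-valued martingale, and the event \eventCov{} is precisely $\bigl\|\bar\Sigma_{\tau+1}^{1/2}(\Sigma_{\tau+1}^{-1} - \bar\Sigma_{\tau+1}^{-1})\bar\Sigma_{\tau+1}^{1/2}\bigr\| \leq 1/2$.

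My plan is to apply a matrix Freedman--Bernstein bound to the stopped martingale $\eta\sum_{u=1}^{t\wedge\tau}(H_u-\bar H_u)/w_u$, followed by a union bound over $t \leq n$. To avoid the awkwardness of a $t$-dependent whitening matrix, I would pass from the $\bar\Sigma_{t+1}$-normalised increments to the step-wise $\bar\Sigma_u$-normalised ones, using $w_u \in [1/2,1]$ (trivially in the stochastic case, and proved separately in the adversarial case) together with a L\"owner comparison that costs only constant factors. Matrix Freedman then requires two inputs: an almost-sure bound $\bigl\|\bar\Sigma_u^{1/2}(H_u-\bar H_u)\bar\Sigma_u^{1/2}/w_u\bigr\| \leq R$, and a conditional-variance estimate $\E_{u-1}\bigl[\bigl(\bar\Sigma_u^{1/2}H_u\bar\Sigma_u^{1/2}\bigr)^2\bigr] \preceq R\cdot \bar\Sigma_u^{1/2}\bar H_u\bar\Sigma_u^{1/2}$, the accumulated trace of which is $O(d\log n)$ by the usual log-determinant potential argument applied to $\sum_u \eta\bar H_u/w_u$.

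Both inputs follow from the explicit formula for $\hat s''_u$ in Appendix~\ref{app:surrogate}. On $\eventGaussian \cap \{u \leq \tau\}$ the bound $\|X_u - \mu_u\|^2_{\Sigma_u^{-1}} \leq dL$ combined with the focus-region constraint $\|\cdot-\mu_u\|^2_{\Sigma_u^{-1}} \leq \Fmax$ implicit in $K_u$ controls every factor in $\hat s''_u$ except the scalar $Y_u$, yielding $R \leq \poly(d,L) \cdot |Y_u|$. The random factor $|Y_u|$ is bounded by $\poly(d,L)/\epsilon$ with probability at least $1 - \delta$ per round via subgaussianity of $\epsilon_u$ and the deterministic envelope from Lemma~\ref{lem:extend}\ref{lem:extend:bound}; a truncated variant of the martingale matching $H_u$ on this high-probability event absorbs the remaining measure-zero technicality. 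Plugging $R = \poly(d,L,1/\epsilon)$ and $V = \poly(d,L,1/\epsilon)$ into matrix Freedman at deviation level $\theta = 1/2$ for the rescaled sum, together with the algorithm's tunings $\eta = \tilde\Theta(\sqrt{d/n})$ and $\sigma^2 = \tilde\Theta(1/d)$, makes the tail at most $6\delta$ per round; the union bound over $t \leq n$ yields $\bbP(\eventCov) \geq 1 - 6n\delta$.

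The principal obstacle is the per-step bound on $H_u$. Because the extension $f_t$ has Lipschitz constant of order $1/\epsilon \sim \sqrt{n}$ outside $K_\epsilon$, a priori $|Y_u|$ could be enormous, and the estimator $\hat s''_u$ inherits this. It is exactly the interaction of $\eventGaussian$ (which says $X_u$ lives inside a moderate $\sqrt{dL}$-ellipsoid around $\mu_u$) with the focus constraint (which says $\mu_u$ stays inside an appropriate $\Fmax$-ellipsoid compatible with the current $\bar\Sigma_u$) that prevents this blow-up on $\{u \leq \tau\}$; tracking the polynomial factor $R$ cleanly through the explicit form of $\hat s''_u$ is the bulk of the work.
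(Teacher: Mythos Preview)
Your telescoping identity and the overall martingale strategy are on the right track and close to what the paper does. The gap is in your variance control. The claim
\[
\E_{u-1}\bigl[\bigl(\bar\Sigma_u^{1/2}H_u\bar\Sigma_u^{1/2}\bigr)^2\bigr] \preceq R\cdot \bar\Sigma_u^{1/2}\bar H_u\bar\Sigma_u^{1/2}
\]
is false in general. An inequality of the form $\E[A^2]\preceq R\,\E[A]$ for a bounded symmetric random matrix $A$ requires $A\succeq 0$ almost surely, but $H_u=\hat s_u''(\mu_u)$ is sign-indefinite: in the $\Sigma_u$-whitened coordinates it equals a scalar multiple of $WW^\top-\id$, which has $d-1$ negative eigenvalues. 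For a concrete obstruction, take $f$ linear near $\mu_u$; then $\bar H_u=s_u''(\mu_u)=0$ while $\E_{u-1}[H_u^2]\succ 0$, so no finite $R$ works. Consequently your log-determinant route to the variance proxy does not go through, and falling back on the crude per-step envelope $|Y_u|\leq O(L/\epsilon)$ gives $\sum_u\E_{u-1}[Y_u^2]\leq O(nL^2/\epsilon^2)=O(n^2)$, which makes the Freedman deviation grow like $\sqrt{n}$ rather than $O(1)$.

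What the paper does instead is bound the variance directly through $\bar M_\tau=\sum_{t\le\tau}\E_{t-1}[Y_t^2]$, and then invoke Lemma~\ref{lem:f-squared} (event \eventFSquared{}) to get $\bar M_\tau\leq 100nL^2$. That lemma is the missing ingredient: it is not a consequence of the per-step bound on $|Y_u|$ but uses the Poincar\'e-like inequality (Corollary~\ref{cor:poincare-ish}) together with $f_t''\succeq v''$ and the log-determinant bound on $\sum_t\tr(\Sigma_t\bar H_t)$ to show that the algorithm cannot accumulate large losses from playing outside $K_\epsilon$. With $\bar M_\tau=O(nL^2)$ and $Y_{\max}=O(L/\epsilon)$ in hand, the paper applies the packaged matrix concentration result Lemma~\ref{lem:s:conc-H-sum} to obtain
\[
\eta\Bigl|\sum_{s\le\tau}\frac{H_s-\bar H_s}{w_s}\Bigr|\preceq \eta\lambda L^2\bigl(\sqrt{d\bar M_\tau}+d^2 Y_{\max}\bigr)\Sigma^{-1}\preceq \tfrac18\,\Sigma^{-1},
\]
after which the comparison $\frac12\bar\Sigma_{\tau+1}^{-1}\preceq\Sigma_{\tau+1}^{-1}\preceq\frac32\bar\Sigma_{\tau+1}^{-1}$ follows by choosing $\Sigma^{-1}$ proportional to $\bar\Sigma_{\tau+1}^{-1}$ (using the definition of $\tau$ to ensure $\Sigma_s^{-1}\preceq 4\bar\Sigma_{\tau+1}^{-1}$ for $s\le\tau$). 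The $6n\delta$ failure probability comes from the union of the events needed for Lemmas~\ref{lem:Ybounds}, \ref{lem:f-squared} and \ref{lem:s:conc-H-sum}, not from a union over rounds of the matrix inequality itself.
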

\noindent We also need $\sum_t \hat s_t$ and $\sum_t \hat q_t$ to be suitably well-concentrated on $K_t$. 
Let \eventConc{} be the event that 
\begin{gather}
\tag{\eventConc} \label{event:surrogate-conc}
\begin{aligned}
\max_{x \in K_\tau} \left|\sum_{t=1}^\tau \left(\hat s_t(x) - s_t(x)\right) \right| \leq C_\tau,  
\max_{x \in K_\tau} \left|\sum_{t=1}^\tau \left(\hat q_t(x) - q_t(x)\right) \right| \leq C_\tau,  \left|\sum_{t=1}^\tau \left(\hat q_t(x_\star) - q_t(x_\star)\right) \right| \leq \tilde C_{\tau},
\end{aligned}
\end{gather}
where $C_\tau = \frac{L}{\lambda} \left[\sqrt{d  \sum_{t=1}^\tau \E_{t-1}[Y_t^2]} + 10\sqrt{n}\right]$, and $\tilde C_\tau = \frac{L}{\lambda} \left[\sqrt{\sum_{t=1}^\tau \E_{t-1}[Y_t^2]} + 10\sqrt{n}\right]$.
\begin{lemma}\label{lem:sequential-conc}
$\bbP(\eventConc) \geq 1 - 4n\delta$.
\end{lemma}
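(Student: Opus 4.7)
The plan is to establish the three bounds by a pointwise‐then‐uniform martingale concentration argument. For any fixed $x$, the increments $\hat s_t(x)-s_t(x)$ and $\hat q_t(x)-q_t(x)$ form martingale differences with respect to $(\sF_t)$ because $\E_{t-1}[\hat s_t]=s_t$ and $\E_{t-1}[\hat q_t]=q_t$ by Lemma~\ref{lem:unbiased}. I would first prove a pointwise Freedman/Bernstein self-normalized tail bound on the sum stopped at $\tau$, then extend to a uniform bound over $K_\tau$ via an $\varepsilon$-net plus a Lipschitz estimate.

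For the pointwise step I would apply a standard self-normalized martingale inequality (as in \cite{Ver18}) to the optionally stopped process, obtaining
\begin{align*}
\bigg|\sum_{t=1}^{\tau}\bigl(\hat s_t(x)-s_t(x)\bigr)\bigg|\;\lesssim\;\sqrt{L\sum_{t=1}^{\tau}\E_{t-1}[\hat s_t(x)^2]}\;+\;L\cdot B,
\end{align*}
where $B$ is any almost-sure upper bound on $|\hat s_t(x)-s_t(x)|$ on the good event. From the explicit formula $\hat s_t(x)=Y_t\bigl(1-\tfrac1\lambda+\tfrac1\lambda R_t(x)\bigr)$, the quantity $R_t(x)$ is a Gaussian likelihood ratio; for $x\in K_\tau$ (so $\norm{x-\mu_t}^2_{\Sigma_t^{-1}}\le \Fmax$) and $X_t$ lying in the high-probability shell of \eventGaussian{} (so $\norm{X_t-\mu_t}^2_{\Sigma_t^{-1}}\le dL$), a direct Gaussian computation yields $R_t(x)\le\poly(d,L,\Fmax)$ almost surely and $\E_{t-1}[R_t(x)^2]=O(1)$ up to logs. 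Plugging these bounds in gives $\E_{t-1}[\hat s_t(x)^2]\lesssim \E_{t-1}[Y_t^2]/\lambda^2$ and an a.s.\ control on $|\hat s_t(x)|$, turning the Freedman bound into $\tfrac{L}{\lambda}\bigl[\sqrt{\sum_{t=1}^{\tau}\E_{t-1}[Y_t^2]}+O(\sqrt n)\bigr]$. This already proves the single-point bound $\tilde C_\tau$ at $x=x_\star$. An identical argument, using the analogous explicit formulas for $\hat q_t$ (whose magnitude is comparable to $\hat s_t$ up to the quadratic rescaling) handles the $\hat q_t(x_\star)-q_t(x_\star)$ term.

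To lift the bounds to uniform control over $K_\tau$, I would cover $K_\tau$ by an $\varepsilon$-net $\mathcal N$ in the Mahalanobis geometry of $\Sigma_\tau^{-1}$; since $K_\tau\subseteq K_\epsilon\subseteq 2\ball(d+1)$ and on event \eventCov{} the $\Sigma_t^{-1}$ are mutually equivalent, a standard volume bound gives $\log|\mathcal N|=O(dL)$ when $\varepsilon$ is taken polynomially small in $n$. The Lipschitz constants of $x\mapsto\hat s_t(x)-s_t(x)$ and $x\mapsto\hat q_t(x)-q_t(x)$ are polynomial in $n,d,1/\lambda,1/\sigma^2$ on the good event (they are dominated by $|Y_t|R_t(x)$ times the operator norm of $\Sigma_t^{-1}$ in the surrogate, and analogously for $\hat q_t$), so the discretisation error is negligible. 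Taking a union bound over $\mathcal N$ adds the factor $\sqrt{dL}$ in the variance term and yields $C_\tau$; the total probability cost of at most $4n\delta$ is absorbed by the global budget.

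\textbf{Main obstacle.} The hardest part is taming the heavy-tailed likelihood ratio $R_t(x)$: a priori it can be exponentially large when $X_t$ strays far from $\mu_t$, which would destroy the sub-Gaussian concentration. The fix is to condition on the event \eventGaussian{} so that $X_t$ is forced into the $\sqrt{dL}$-shell of $\mu_t$, and simultaneously restrict to $x\in K_\tau$; the interplay between the $\lambda$-contraction in the definition of $R_t$ and the $\Fmax$ bound defining $K_\tau$ is what produces the clean variance proxy $\E_{t-1}[Y_t^2]/\lambda^2$ and thus the advertised form of $C_\tau$ and $\tilde C_\tau$.
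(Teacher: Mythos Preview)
Your proposal is essentially correct in outline, but it takes a substantially longer route than the paper. The paper's proof is a two-line invocation of off-the-shelf results: first it uses Lemma~\ref{lem:Ybounds} with a union bound over $t\le\tau$ to obtain $|Y_t|\le 10L/\epsilon$ with probability at least $1-n\delta$, and then it applies the three ready-made sequential concentration results for the surrogate and its quadratic approximation (Lemmas~\ref{lem:s:conc-s-uniform}, \ref{lem:s:conc-q-uniform}, and \ref{lem:s:conc-q} from the monograph of \cite{L24}) with $\Ymax=10L/\epsilon$. The $\Ymax$ terms are absorbed into the $10\sqrt{n}$ of $C_\tau$ and $\tilde C_\tau$ by the choice of $\epsilon$, and the final union bound yields $1-4n\delta$. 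In other words, the Freedman analysis, the second-moment computation for $R_t$, and the covering argument you sketch are exactly what underlies those cited lemmas; you are re-deriving them rather than citing them.

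One technical point in your write-up deserves care. You propose to ``condition on \eventGaussian{}'' so that $R_t(x)$ is almost surely bounded, and then apply Freedman. But the per-round event $\{\norm{X_t-\mu_t}_{\Sigma_t^{-1}}^2\le dL\}$ is not $\sF_{t-1}$-measurable, so conditioning on it destroys the martingale-difference property of $\hat s_t(x)-s_t(x)$. The standard fix (and what the cited lemmas do) is to work with an a.s.\ bound on $|Y_t|$ rather than on $R_t(x)$: absorb the event $\{|Y_t|>\Ymax\}$ into the stopping time, apply Freedman to the stopped process, and carry $\Ymax$ through to the right-hand side. The paper's use of Lemma~\ref{lem:Ybounds} to pin down $\Ymax$ up front is precisely this step, and it avoids having to reason about the likelihood ratio at all.
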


\begin{proof}
By a union bound and Lemma~\ref{lem:Ybounds}, with probability at least $1 - n \delta$ for all $t \leq \tau$, $Y_t \leq \frac{10L}{\epsilon}$.
The result follows from Lemmas~\ref{lem:s:conc-s-uniform}, \ref{lem:s:conc-q-uniform}, and \ref{lem:s:conc-q} with a union bound. 
\end{proof}

As we mentioned, a serious challenge in our analysis is that the extended loss functions are bounded in $[0,1]$ in $K_{\epsilon}$ but are unbounded on $\R^d$.
Of course, we hope the algorithm will spend most of its time playing actions $X_t \in K$ but it can happen in the initial rounds that $X_t$ is not in $K$
and then $f_t(X_t)$ could be quite large.
Let \eventFSquared{} be the event that
\begin{align*}
M_\tau = \sum_{t=1}^\tau Y_t^2 \leq 100n\logs^2  \quad \text{and} \quad
\bar M_\tau = \sum_{t=1}^\tau \E_{t-1}[Y_t^2] \leq 100 n\logs^2 \,.
\tag{\eventFSquared}\label{event:f-squared}
\end{align*}

\begin{lemma}\label{lem:f-squared}
$\bbP(\eventFSquared) \geq 1 - 4n\delta$.
\end{lemma}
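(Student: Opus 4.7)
The plan is to first bound the predictable quadratic sum $\bar M_\tau = \sum_{t\le\tau}\E_{t-1}[Y_t^2]$ and then use a Freedman-type concentration to transfer the bound to $M_\tau$. The principal challenge is that $Y_t$ can be enormous whenever $X_t$ lies well outside $K_\epsilon$, since $\epsilon^{-1} = \tilde\Theta(\sqrt n)$ and $Y_t$ scales with $\pip(X_t)/\epsilon$. The mechanism that saves us is that the extension $f_t$ has very high curvature when $X_t$ leaves $K_\epsilon$, so such excursions drive $\bar H_t$ up and $\Sigma_t$ down, suppressing future excursions.

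First I would decompose $Y_t$ using $A_t = X_t/\pip(X_t)$ and the definitions of $f_t$ and $v$:
\begin{equation*}
Y_t = f_t(X_t) + \pip(X_t)\,\epsilon_t, \qquad Y_t^2 \le 2 f_t(X_t)^2 + 2\,\pip(X_t)^2 \epsilon_t^2.
\end{equation*}
Subgaussianity of $\epsilon_t$ gives $\E_{t-1}[\epsilon_t^2] = O(1)$, so $\E_{t-1}[\pip(X_t)^2 \epsilon_t^2] = O(\E_{t-1}[\pip(X_t)^2])$. Since $\ell_t \ge 0$, the lower bound in Lemma~\ref{lem:extend}\ref{lem:extend:bound} lifted to $f_t$ yields $\pip(X_t) \le 1 + \epsilon f_t(X_t)/2$, and hence $\pip(X_t)^2 \le 2 + \epsilon^2 f_t(X_t)^2/2$. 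Modulo constants and an $\epsilon^2 \ll 1$ prefactor on the second piece, the task reduces to bounding $\sum_{t\le\tau}\E_{t-1}[f_t(X_t)^2]$.

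The crux is to bound this quantity by a Hessian-trace expression that telescopes. Because $\mu_t \in K_\epsilon$ and $f_t \in [0,1]$ on $K_\epsilon$ we have $f_t(\mu_t)^2 \le 1$, so I write $f_t(X_t)^2 \le 2 + 2(f_t(X_t)-f_t(\mu_t))^2$ and bound the variance-like term by invoking Proposition~\ref{prop:poincare-ish}, the Poincar\'e-like inequality for convex functions under a Gaussian. This should produce a bound of the form $\E_{t-1}[(f_t(X_t)-f_t(\mu_t))^2] \lesssim \tr(\bar H_t \Sigma_t)$. On event \eventCov{} (Definition~\ref{def:tau}) we have $\Sigma_t \preceq 2\bar\Sigma_t$ for all $t\le\tau$, and a log-determinant telescoping applied to $\bar\Sigma_{t+1}^{-1} = w_t[\sigma^{-2}\id + \eta\sum_{u\le t}\bar H_u/w_u]$, using $w_t \in [1/2,1]$ on $\{\tau \ge t\}$, gives $\sum_{t\le\tau}\eta\,\tr(\bar H_t \bar\Sigma_{t+1}) = \tilde O(d L)$. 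Dividing by $\eta$ and plugging in the tuning $\eta \asymp \sqrt{d/(n L^3)}$ delivers the target bound $\bar M_\tau \le 50 n L^2$.

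Finally, to go from $\bar M_\tau$ to $M_\tau$, I would work on the truncation event $Y_t \le 10 L/\epsilon$ furnished by Lemma~\ref{lem:Ybounds} (itself a consequence of \eventGaussian{}) and apply Freedman's inequality to the martingale differences $D_t = Y_t^2 - \E_{t-1}[Y_t^2]$. The per-round magnitudes are at most $\tilde O(L^2/\epsilon^2)$ and the predictable quadratic variation is at most $\tilde O(L^2/\epsilon^2)\bar M_\tau$, so the deviation is of order $\tilde O((L/\epsilon)\sqrt{\bar M_\tau L})$, which is $o(n L^2)$ for the prescribed $\epsilon$. A union bound over the truncation event, the Freedman event and \eventCov{} yields the stated probability $1 - 4n\delta$. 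The hardest part is the Poincar\'e-like step: the inequality must be tight enough that the boundary blow-up of $f_t$ is fully absorbed into $\bar H_t$, so that the subsequent log-determinant sum stays polynomial in $d$ rather than inheriting an $\epsilon^{-1}$ factor.
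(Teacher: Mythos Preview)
Your high-level plan --- Poincar\'e on the squared loss, then log-determinant telescoping, then one-sided concentration --- matches the paper's. The gap is in the Poincar\'e step: the inequality you write, $\E_{t-1}[(f_t(X_t)-f_t(\mu_t))^2] \lesssim \tr(\bar H_t \Sigma_t)$, is not what Proposition~\ref{prop:poincare-ish} delivers, for two independent reasons. First, the proposition outputs $\tr\bigl(\Sigma_t\,\E_{t-1}[h''(X_t)]\bigr)$ for whichever $h$ you plug in; with $h = f_t - f_t(\mu_t)$ this is $\tr\bigl(\Sigma_t\,\E_{t-1}[f_t''(X_t)]\bigr)$, which is \emph{not} $\tr(\bar H_t\Sigma_t)$. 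Recall $\bar H_t = s_t''(\mu_t)$ is the Hessian of the smoothed surrogate at a single point, whereas $\E_{t-1}[f_t''(X_t)]$ is the Gaussian-averaged Hessian of the raw extension; bridging them requires Proposition~\ref{prop:hess} and costs a factor $1/\lambda$. Second, the Poincar\'e-like bound carries a prefactor $\bigl(\E[h(X)] + \lip(h)\sqrt{\norm{\Sigma}L}\bigr)$, and since $\lip(f_t) = \Theta(d/\epsilon)$ (Lemma~\ref{lem:lip}) this prefactor is of order $\sigma/\epsilon$, not $O(1)$. After both corrections the conclusion actually reads
\begin{align*}
\sum_{t \leq \tau} \E_{t-1}[Y_t^2] \;\lesssim\; n \;+\; \frac{\sigma M\sqrt{d}}{\epsilon\lambda}\sum_{t\leq\tau}\tr(\Sigma_t\bar H_t) \;\lesssim\; n \;+\; \frac{\sigma M d^{3/2}L}{\epsilon\lambda\eta}\,,
\end{align*}
and it is only the tuning of $\epsilon,\lambda,\eta,\sigma$ that forces the second term to be $\leq n/2$. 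So your hope that the $\epsilon^{-1}$ blow-up is ``fully absorbed into $\bar H_t$'' is misplaced: both an $\epsilon^{-1}$ and a $\lambda^{-1}$ survive to the final line and are eliminated by the parameter choices, not by the inequality itself.

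A smaller issue is that $f_t - f_t(\mu_t)$ is not non-negative, so Proposition~\ref{prop:poincare-ish} does not apply to it verbatim. The paper sidesteps this by first reducing via $Y_t \leq 4L + 2v(X_t)$ (Lemma~\ref{lem:Ybounds}) and then applying the Poincar\'e step to $v = (\pip-1)/\epsilon$, which is non-negative and vanishes on $K_\epsilon \ni \mu_t$; the chain $v'' \preceq f_t''$ together with Proposition~\ref{prop:hess} then brings $\bar H_t$ in with the $1/\lambda$ factor. Your Freedman step at the end would work, though the paper uses the simpler one-sided bound of Corollary~\ref{cor:conc} on $v(X_t)^2$ under the truncation event $\{v(X_t)^2 \leq d\sigma^2 L/\epsilon^2\}$.
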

Combining all the lemmas with a union bound shows that $\eventGood = \eventGaussian \cap \eventCov \cap \eventConc \cap \eventFSquared$ holds with probability at
at least $1 - 14 n \delta$.
For the remainder we bound the regret on $\eventGood$.
\paragraph{Step 2: Basic bounds}
We can now lay the groundwork for the deterministic analysis.
Here we collect some relatively straightforward properties of the random quantities under event \eventGood{}.
On $t \leq \tau$, $\Sigma_t \preceq 2 \bar \Sigma_t \preceq 4 \bar \Sigma_1 = 4 \sigma^2$ we additionally have
\begin{align*}
\norm{X_t - \mu_t} = \norm{X_t - \mu_t}_{\Sigma_t \Sigma_t^{-1}}
\leq \sqrt{d L\norm{\Sigma_t}}
\leq 2\sigma \sqrt{d L} \,.
\end{align*}
Next, for any $t \leq \tau$,
\begin{align*}
\norm{g_t}^2_{\Sigma_{t+1}}
&\leq 2 \norm{g_t}^2_{\bar \Sigma_{t+1}}
\leq 4 \norm{g_t}^2_{\bar \Sigma_t}
\leq 6 \norm{g_t}^2_{\Sigma_t} \\
&= \frac{6 R_t(\mu_t)^2 Y_t^2}{(1 - \lambda)^2} \norm{X_t - \mu_t}^2_{\Sigma_t^{-1}} 
\leq \frac{54 Y_t^2}{(1 - \lambda)^2} \norm{X_t - \mu_t}^2_{\Sigma_t^{-1}} 
\leq  100 Y_t^2 dL\,,
\end{align*}
where in the first line we used the definition of $\tau$ and Lemma~\ref{lem:sigma}.
In the second line we used Lemma~\ref{lem:Rtsmall} to bound $R_t(\mu_t) \leq 3$ and \eventGaussian{}.
Combining the above with event \ref{event:f-squared} yields
\begin{align}
\sum_{t=1}^\tau \norm{g_t}^2_{\Sigma_{t+1}} \leq d L M_\tau \leq \frac{1}{2} ndL^4 \,.
\label{eq:basic:g}
\end{align}
The width of the confidence intervals $C_\tau$ is also well-controlled under \eventGood{}.
Recall that
\begin{align*}
C_\tau &=  \frac{L}{\lambda} \left[\sqrt{d \bar M_\tau} + 10\sqrt{n}\right] 
\leq \frac{L}{\lambda}\left[10L\sqrt{nd} + 10\sqrt{n}\right]
\leq \frac{11L^{2}}{\lambda} \sqrt{nd}
\end{align*}
where the first inequality follows from event \ref{event:f-squared}. Similarly, we have that $\tilde C_\tau \leq \frac{11L^{2}}{\lambda} \sqrt{n}$.
The matrix $\Sigma_t^{-1}$ accumulates curvature estimates. The following lemma follows from 
standard trace/log-determinant arguments and its proof is given in Appendix~\ref{app:lem:logdet}. 
\begin{lemma}\label{lem:logdet}
$\sum_{t=1}^\tau \tr(\Sigma_t \bar H_t) \leq \frac{d L}{\eta}$. 
\end{lemma}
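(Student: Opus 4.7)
}
The plan is to reduce to the classical potential-telescoping argument from ONS analysis, but run on the predictable covariance sequence $\bar\Sigma_t$ rather than the realised one. First I would invoke event $\eventCov$, which is built into the Step~1 concentration bounds: on $\eventGood$ we have $\Sigma_t\preceq 2\bar\Sigma_t$ for all $t\leq\tau$, and since $\bar H_t\succeq\zeros$ (it is the Hessian of a convex function, by Lemma~\ref{lem:unbiased}) this gives $\tr(\Sigma_t \bar H_t)\leq 2\tr(\bar\Sigma_t \bar H_t)$. Hence it is enough to prove the corresponding bound for $\sum_t \tr(\bar\Sigma_t\bar H_t)$.

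Next, using the recursion $\bar\Sigma_{t+1}^{-1}=c_t\,\bar\Sigma_t^{-1}+\eta\bar H_t$, where $c_t:=w_t/w_{t-1}\in\{1,1-2\gamma\}\subset(1/2,1]$, I rewrite $\eta\bar H_t=\bar\Sigma_{t+1}^{-1}-c_t\bar\Sigma_t^{-1}$ and apply a standard log-det/trace inequality. Concretely, for PSD $A,B$ with $B\succeq A$ and eigenvalues $\mu_i$ of $A^{-1/2}(B-A)A^{-1/2}$,
\begin{align*}
\log\det(B)-\log\det(A)=\sum_i\log(1+\mu_i)\geq \sum_i \tfrac{\mu_i}{2}=\tfrac{1}{2}\tr\bigl(A^{-1}(B-A)\bigr),
\end{align*}
provided every $\mu_i\leq 1$, i.e.\ provided $B-A\preceq A$. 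Applied with $A=c_t\bar\Sigma_t^{-1}$ and $B=\bar\Sigma_{t+1}^{-1}$ this yields
\begin{align*}
\eta\,\tr(\bar\Sigma_t\bar H_t)\leq \tfrac{2}{c_t}\bigl[\log\det(\bar\Sigma_{t+1}^{-1})-\log\det(\bar\Sigma_t^{-1})-d\log c_t\bigr].
\end{align*}
Because of the focus-region constraint and the explicit form of $\hat s_t''(\mu_t)$ given in Appendix~\ref{app:surrogate}, together with $\Sigma_t\preceq 2\bar\Sigma_t$, one can verify the smallness condition $\eta\bar H_t\preceq c_t\bar\Sigma_t^{-1}$ on $\eventGood$ with the chosen constants; this is the least routine part of the argument.

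Now I telescope. Since $c_t\in(1/2,1]$ and $\prod_{t=1}^\tau c_t=w_\tau\in[1/2,1]$, summing gives
\begin{align*}
\eta\sum_{t=1}^\tau\tr(\bar\Sigma_t\bar H_t)\leq 4\bigl[\log\det(\sigma^2\bar\Sigma_{\tau+1}^{-1})+d\log 2\bigr].
\end{align*}
Finally I bound $\log\det(\sigma^2\bar\Sigma_{\tau+1}^{-1})$ by AM--GM: $\log\det(M)\leq d\log(\tr(M)/d)$. Since $\sigma^2\bar\Sigma_{\tau+1}^{-1}=w_\tau\id+\eta\sigma^2 w_\tau\sum_{u\le\tau}\bar H_u/w_u$, and the operator-norm bounds on $\bar H_u=s_u''(\mu_u)$ from the surrogate properties (combined with $Y_u\leq 10L/\epsilon$ on $\eventGood$) yield $\tr(\bar H_u)\leq\poly(n,d,1/\delta)$, the trace of $\sigma^2\bar\Sigma_{\tau+1}^{-1}$ is polynomial in $n,d,1/\delta$, so $\log\det(\sigma^2\bar\Sigma_{\tau+1}^{-1})\leq O(dL)$. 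Combining with the factor-$2$ loss from $\Sigma_t\preceq 2\bar\Sigma_t$ and absorbing all constants into $L$ delivers $\sum_{t=1}^\tau\tr(\Sigma_t\bar H_t)\leq dL/\eta$, as claimed.

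\paragraph{Main obstacle.} The one non-mechanical step is verifying the smallness hypothesis $\eta\bar H_t\preceq c_t\bar\Sigma_t^{-1}$ needed for the trace-log-det inequality to yield $\tr(\bar\Sigma_t\bar H_t)$ on the left rather than the (easier) $\tr(\bar\Sigma_{t+1}\bar H_t)$. This requires combining the explicit form of $\hat s_t''(\mu_t)$, the bound $R_t(\mu_t)\leq 3$ from Lemma~\ref{lem:Rtsmall}, the Gaussian tail control from $\eventGaussian$, the loss bound $Y_t\leq 10L/\epsilon$ on $\eventGood$, and the tuning of $\eta,\lambda,\sigma,\Fmax$ — a routine but bookkeeping-heavy calculation that is the reason the proof is deferred to the appendix.
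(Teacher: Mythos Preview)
Your overall scaffold is right and matches the paper's: both arguments are the standard trace/log-det telescoping, and the paper's only wrinkle is to introduce an auxiliary sequence $\tilde\Sigma_t^{-1}$ with uniform weights (so that $\tilde\Sigma_{t+1}^{-1}=\tilde\Sigma_t^{-1}+\eta\bar H_t$ telescopes cleanly) instead of working with $\bar\Sigma_t$ and carrying the $c_t$ factors as you do. Either route works; yours is slightly messier in the telescoping, the paper's is slightly messier in setting up the sandwich $\tilde\Sigma_t^{-1}\asymp\Sigma_t^{-1}$.

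There is, however, a genuine gap in how you propose to verify the smallness condition $\eta\bar H_t\preceq c_t\bar\Sigma_t^{-1}$. You repeatedly appeal to properties of the \emph{random} estimator $H_t=\hat s_t''(\mu_t)$: its explicit form, $R_t(\mu_t)\leq 3$, the Gaussian tail event $\eventGaussian$, and the high-probability bound $Y_t\leq 10L/\epsilon$. But the object you need to control is $\bar H_t=\E_{t-1}[H_t]=s_t''(\mu_t)$, and the cited bounds only hold on a high-probability event, so they do not transfer to the conditional expectation without additional work (you would have to show the complement contributes negligibly, which is awkward since $H_t$ is unbounded). The clean and correct tool is the deterministic operator-norm bound of Lemma~\ref{lem:hess}: $\norm{\Sigma_t^{1/2}s_t''(\mu_t)\Sigma_t^{1/2}}\leq\frac{\lambda\lip(f_t)}{1-\lambda}\sqrt{d\norm{\Sigma_t}}$, combined with $\lip(f_t)\leq 15d/\epsilon$ from Lemma~\ref{lem:lip} and $\norm{\Sigma_t}\leq 4\sigma^2$. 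This is exactly what the paper does (see \cref{eq:spec}), and it makes the verification a one-line constant check rather than the ``bookkeeping-heavy calculation'' you anticipate. The same confusion recurs in your final step, where you invoke $Y_u$ to bound $\tr(\bar H_u)$; the right bound is $\bar H_u\preceq\delta^{-1}\id$ from Lemma~\ref{lem:sigma}\ref{lem:sigma:H}, again deterministic.

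Two minor points. First, your displayed inequality has $2/c_t$ where the computation actually gives $2c_t$; since $c_t\leq 1$ and the bracketed term is nonnegative, your version is still an upper bound, so no harm. Second, $\Sigma_t\preceq 2\bar\Sigma_t$ for $t\leq\tau$ holds by the \emph{definition} of the stopping time $\tau$ (Definition~\ref{def:tau}\ref{def:tau:S}), not by invoking $\eventCov$ or $\eventGood$; the lemma itself is deterministic once $t\leq\tau$.
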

The following lemma affords control over the bonuses we add in the adversarial setting. 

\begin{lemma}\label{lem:bonus}
The number of rounds where the bonus is non-zero is at most $m_\tau \leq d L$.
Furthermore, 
suppose that $x \in \R^d$ and $\norm{x - \mu_t}^2_{\Sigma_t^{-1}} \geq \Fmax$ for some $t \leq \tau$. Then, $\sum_{s=1}^t \flat_s(x) \leq -\frac{\gamma \Fmax}{24}$.
\end{lemma}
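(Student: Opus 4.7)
Throughout let $B_{t-1}=\sum_{s<t}\sind(\flat_s\neq\zeros)\Sigma_s^{-1}$ and $P_t=\min_z\sum_{s<t}\sind(\flat_s\neq\zeros)\norm{z-\mu_s}^2_{\Sigma_s^{-1}}$, so that $z_t$ realises the minimum. The central algebraic tool will be the Pythagorean identity for weighted quadratics: for precision $B=\sum_s\Sigma_s^{-1}$ with minimiser $z$, the function $x\mapsto\sum_s\norm{x-\mu_s}^2_{\Sigma_s^{-1}}$ equals its minimum plus $\norm{x-z}^2_B$.

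For the first part I would classify each bonus round as a \emph{matrix event} (branch 2: $\Sigma_t^{-1}\not\preceq B_{t-1}$) or a \emph{distance event} (branch 3: $\norm{\mu_t-z_t}^2_{\Sigma_t^{-1}}\geq\Fmax/3$, which requires branch 2 to fail). A distance event fires at most once: applying the Pythagorean identity to the objective after adding the new term $\norm{\cdot-\mu_t}^2_{\Sigma_t^{-1}}$ gives $P_{t+1}-P_t=\norm{z_t-\mu_t}^2_M$ with $M=(B_{t-1}^{-1}+\Sigma_t)^{-1}$; the accompanying condition $B_{t-1}\succeq\Sigma_t^{-1}$ forces $M\succeq\Sigma_t^{-1}/2$, and hence $P_{t+1}\geq\Fmax/6>\Fmax/24$, so branch 1 fires at every subsequent round and no further bonus is added. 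Matrix events are bounded by a log-det argument: ordering bonus rounds $s_1<s_2<\cdots$ and writing $C_j=\sum_{i\leq j}\Sigma_{s_i}^{-1}$, the first bonus must be a matrix event (since $B_0=0$), and each later matrix event produces $\lambda_{\max}(C_{j-1}^{-1/2}\Sigma_{s_j}^{-1}C_{j-1}^{-1/2})>1$ and hence $\log\det(C_j)-\log\det(C_{j-1})\geq\log 2$. Telescoping, and using that distance events cannot decrease $\log\det$, the lower bound $C_1\succeq\id/(2\sigma^2)$ available on \eventCov{} together with $C_{m_\tau}\preceq(3/2)m_\tau\max_s\bar\Sigma_s^{-1}$ and the polylogarithmic bound on $\log\det\bar\Sigma^{-1}$ implied by Lemma~\ref{lem:logdet} give $k_2\leq dL-1$. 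Adding the at most one distance event yields $m_\tau\leq dL$.

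For the second part, write $\sum_{s=1}^t\flat_s(x)=-\gamma\sum_{s\in S_t}\norm{x-\mu_s}^2_{\Sigma_s^{-1}}$ with $S_t=\{s\leq t:\flat_s\neq\zeros\}$; it suffices to bound the inner sum below by $\Fmax/24$. If $t\in S_t$ the sum already contains the term $\norm{x-\mu_t}^2_{\Sigma_t^{-1}}\geq\Fmax$ and we are done. Otherwise $\flat_t=\zeros$ and the Pythagorean identity gives $\sum_{s\in S_t}\norm{x-\mu_s}^2_{\Sigma_s^{-1}}=P_t+\norm{x-z_t}^2_{B_{t-1}}$. If branch 1 fired then $P_t\geq\Fmax/24$ directly; otherwise branch 4 fired, giving both $\Sigma_t^{-1}\preceq B_{t-1}$ and $\norm{\mu_t-z_t}^2_{\Sigma_t^{-1}}<\Fmax/3$. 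The former lets us lower bound $\norm{x-z_t}^2_{B_{t-1}}\geq\norm{x-z_t}^2_{\Sigma_t^{-1}}$, and combining the latter with the triangle inequality in the $\Sigma_t^{-1}$-norm and the hypothesis $\norm{x-\mu_t}_{\Sigma_t^{-1}}\geq\sqrt{\Fmax}$ yields $\norm{x-z_t}_{\Sigma_t^{-1}}\geq\sqrt{\Fmax}(1-1/\sqrt{3})$, whose square comfortably exceeds $\Fmax/24$.

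The principal obstacle is the log-determinant bookkeeping in the first part: translating the per-round increments $\Sigma_s^{-1}$ into a potential that provably grows by at least $\log 2$ per matrix event, while simultaneously controlling both endpoints of the telescoping sum using only the high-probability event \eventCov{} and the already-proved Lemma~\ref{lem:logdet}. The remaining pieces -- the one-off distance-event argument and Part 2 -- are essentially short applications of the Pythagorean identity and the triangle inequality.
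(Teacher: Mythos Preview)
Your proof is correct and follows essentially the same two-part structure as the paper: bound matrix events via a $\log\det$ telescoping argument, show at most one distance event by proving the potential $P_{t+1}$ jumps past $\Fmax/24$, and for Part~2 do the same branch-by-branch case analysis. Two small remarks: your use of the exact Pythagorean identity (giving $P_{t+1}-P_t=\norm{z_t-\mu_t}^2_{(B_{t-1}^{-1}+\Sigma_t)^{-1}}$ and the sharper $(1-1/\sqrt3)^2\Fmax$ in Part~2) is cleaner than the paper's repeated squared-triangle inequalities, but the endpoint control for the $\log\det$ telescope should cite Lemma~\ref{lem:sigma}(c) (the crude bound $\bar\Sigma_s^{-1}\preceq\delta^{-1}\id$) rather than Lemma~\ref{lem:logdet}, which bounds a trace sum and does not directly give you $\log\det\bar\Sigma^{-1}$.
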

Since $\gamma = \frac{1}{4dL}$, the lemma also shows that $w_t \in [1/2,1]$.
\paragraph{Step 3: Regret}
We are finally in a position to bound the regret until round $\tau$. We start by bounding the regret in the adversarial setting. 
\paragraph{Regret in the Adversarial Setting} Suppose that $x^s_{\star,\tau} \in K_{\tau}$ and that the algorithm did not restart at the end of round $\tau$. 
By Theorem~\ref{thm:ftrl2},
\begin{align*}
\frac{1}{2} \norm{x^s_{\star,\tau} - \mu_{\tau+1}}_{\Sigma_{\tau+1}^{-1}}^2 
&\leq \frac{\diam(K)^2}{2\sigma^2} + 2\eta^2\sum_{t=1}^\tau \norm{g_t}^2_{\Sigma_{t+1}} + \sum_{t=1}^\tau \flat_t(x^s_{\star,\tau}) - \eta \hqReg_\tau(x^s_{\star,\tau}) \\
\tag{Event \ref{event:surrogate-conc}}
&\leq \frac{\diam(K)^2}{2\sigma^2} + 2\eta^2 \sum_{t=1}^\tau \norm{g_t}^2_{\Sigma_{t+1}} - \eta \qReg_\tau(x^s_{\star,\tau}) + \eta C_\tau \\
\tag{\cref{eq:basic:g}}
&\leq \frac{\diam(K)^2}{2\sigma^2} + \eta^2 ndL^4 - \eta \qReg_\tau(x^s_{\star,\tau}) + \eta C_\tau \\
\tag{\cref{eq:rounded} and Lemma~\ref{lem:q:lower2}}
&\leq \frac{4(d+1)^2}{\sigma^2} + \eta^2 ndL^4 + \eta C_\tau - \eta \sReg_\tau(x^s_{\star,\tau}) \\
\constantCheck
&\leq \frac{\gamma}{4}\Fmax - \eta \sReg_\tau(x^s_{\star,\tau})\,.
\end{align*}
Reordering the above and using that $\eta \fReg_\tau(\xopt) \leq \eta \qReg_\tau(\xopt) + \frac{dL}{\lambda}$ by Lemmas~\ref{lem:s:basic}, \ref{lem:s:lower}, and \ref{lem:logdet} shows that $\max_{x \in K} \fReg_\tau(x) \leq \frac{\gamma \Fmax}{\eta}$.
Furthermore, by definition of $x^s_{\star,\tau}$ we have $ - \sReg_\tau(x^s_{\star,\tau}) \leq - \sReg_\tau(x^{\hat s}_{\star,\tau})$ and consequently
\newcommand{\hsReg}{\Reg^{\hat s}}
\begin{align*}
    \frac{1}{2} \norm{x^s_{\star,\tau} - \mu_{\tau+1}}_{\Sigma_{\tau+1}^{-1}}^2
    &\leq \frac{1}{4} \Fmax -\eta \sReg_\tau(x^{\hat{s}}_{\star,\tau}) \overset{(a)}{\leq} \frac{1}{4}\Fmax - \eta \hsReg_\tau(x^{\hat s}_{\star,\tau}) + \eta C_\tau \overset{(b)}{\leq} \frac{1}{2}\Fmax\,,
\end{align*}
where the $(a)$ is due to event \eventConc{} and $(b)$ is due the assumption that the algorithm did not restart, which means that $- \eta \hsReg_\tau(x^{\hat s}_{\star,\tau}) \leq \frac{\gamma\Fmax}{32}$ by definition. This implies that $x^s_{\star,\tau} \in K_{\tau + 1}$.
On event \eventGood{}, the only way that $\tau \neq n$ is that the algorithm has restarted or
$x^s_{\star,\tau} \notin K_{\tau}$.
Hence we only need to show that if the latter happens, then the algorithm restarts and if the algorithm restarts, then the regret until time $\tau$ is negative.
\paragraph{Regret in the Stochastic Setting} Observe that the regret on $(f_t)$ can be bounded by:
\begin{align}\label{eq:regineqsto}
    0 \leq \eta \fReg_\tau(\xopt) \leq \eta \qReg_\tau(\xopt) + dL\lambda^{-1}\,,
\end{align}
In the stochastic setting, the most important difference is that $f_t$ is constant over time and therefore $x_{\star,\tau} = \xopt$ is also constant. This means that we do not have to restart to make sure that $\xopt$ stays in $K_{\tau}$, and in turn this means that we do not have to resort to uniform concentration bounds.
After the applying the same steps as in the adversarial setting and replacing $x^s_{\star,\tau}$ by $\xopt$ and $C_{\tau}$ by $\tilde{C}_{\tau}$, leads to 
\begin{align*}
    \frac{1}{2} \norm{\xopt - \mu_{\tau+1}}_{\Sigma_{\tau+1}^{-1}}^2 & \leq \frac{4(d+1)^2}{\sigma^2} + \eta^2 ndL^4 + \frac{dL}{\lambda} + \eta \tilde C_\tau - \eta \fReg_\tau(\xopt) \leq \frac{1}{2} \Fmax - \eta \fReg_\tau(\xopt)\,.
\end{align*}
Rearranging and using that $\fReg_\tau(\xopt) \geq 0$ shows that with probability at least $1 - 3n \delta$ we have that $\fReg_\tau(\xopt) \leq \frac{F_{\max}}{2\eta}$ and that $\xopt \in K_{\tau + 1}$.
Thus, a union bound over $T$ completes the proof of Theorem~\ref{thm:stoch}. 
\paragraph{Step 4: Restart condition}
Suppose that $x^s_{\star,\tau} \notin K_{\tau}$. We will show that the algorithm restarts.
Since the surrogate losses $(s_u)$ are convex there exists $y \in \argmin_{x \in K_{\tau}} \sum_{t=1}^\tau s_t(x)$ such that $y \in \partial K_{\tau}$. 
By the definition of $K_\tau$ there exists a $t \leq \tau$ such that $\norm{y - \mu_t}_{\Sigma_t^{-1}}^2 = \Fmax$. Therefore,
\begin{align*}
\eta \sum_{t=1}^\tau \left(\hat s_t(\mu_t) - \hat s_t(x^{\hat s}_{\star,\tau})\right) 
&= \eta \sum_{t=1}^{\tau} \left(\hat s_t(y) - \hat s_t(x^{\hat s}_{\star,\tau})\right) + \eta \sum_{t=1}^\tau \left(\hat s_t(\mu_t) - \hat s_t(y)\right) \\
\tag{Event~\ref{event:surrogate-conc}, def.\ $y$}
&\leq \eta C_\tau + \eta \sum_{t=1}^\tau \left(\hat s_t(\mu_t) - \hat s_t(y)\right) \\
\tag{Event~\ref{event:surrogate-conc}, Lemma~\ref{lem:q:lower2}}
&\leq 3 \eta C_\tau + \eta \sum_{t=1}^\tau \left(\hat q_t(\mu_t) - \hat q_t(y)\right)\\
\tag{Theorem~\ref{thm:ftrl2}}
&\leq 3 \eta C_\tau + \frac{\diam(K)^2}{\sigma^2} + 2\eta^2 \sum_{t=1}^\tau \norm{g_t}^2_{\Sigma_t} + \sum_{t=1}^\tau \flat_t(y) \\ 
\tag{Lemma~\ref{lem:bonus}}
&\leq 3 \eta C_\tau + \frac{\diam(K)^2}{\sigma^2} + 2\eta^2 \sum_{t=1}^\tau \norm{g_t}^2_{\Sigma_t} - \frac{\gamma \Fmax}{16} \\
\tag{\cref{eq:rounded} and \cref{eq:basic:g}}
&\leq 3 \eta C_\tau + \frac{4(d+1)^2}{\sigma^2} + \eta^2dn L^4 - \frac{\gamma \Fmax}{16} \\
\constantCheck
&\leq -\frac{\gamma \Fmax}{32} \,,
\end{align*}
which means that a restart is triggered as required.
\paragraph{Step 5: Regret on restart}
The last job is to prove the regret is negative whenever a restart is triggered. From the proof of Lemma~\ref{lem:Ybounds} we can recover that $f_\tau(x_{\star, \tau-1}) - f_\tau(x_{\star, \tau}) \leq 8L$. Suppose that the restart does happen at the end of round $\tau$. Then, 
\begin{align*}
\eta \fReg_{\tau}(x_{\star, \tau}) & = \eta \fReg_{\tau}(x_{\star, \tau-1}) + \eta \sum_{t=1}^\tau \left(f_t(x_{\star, \tau-1}) - f_t(x_{\star, \tau})\right)\\ \tag{def.\ of $x_{\star,\tau-1}$}
& \leq \eta \fReg_{\tau}(x_{\star, \tau-1}) + \eta \left(f_\tau(x_{\star, \tau-1}) - f_\tau(x_{\star, \tau})\right) \\ 
& \leq \eta \fReg_{\tau}(x_{\star, \tau-1}) + 8\eta L \\ 
\tag{Lemma~\ref{lem:s:lower}}
&\leq \eta \sum_{t=1}^\tau \left(s_t(\mu_t) - s_t(x_{\star,\tau-1})\right) + \frac{d L}{\lambda} + 8 \eta L  \\ 
\tag{Event~\ref{event:surrogate-conc}}
&\leq \eta \sum_{t=1}^\tau \left(\hat s_t(\mu_t) - \hat s_t(x_{\star,\tau-1})\right) + \frac{d L}{\lambda} + 2 \eta C_\tau   + 8\eta L  \\
\tag{$x_{\star,\tau-1} \in K_\tau$ and def.\ of $x_{\star,\tau}^{\hat s}$}
&\leq \eta \sum_{t=1}^\tau \left(\hat s_t(\mu_t) - \hat s_t(x^{\hat s}_{\star,\tau})\right) + \frac{d L}{\lambda} + 2 \eta C_\tau  + 8 \eta L  \\
\tag{Restart condition}
&\leq  \frac{d L}{\lambda} + \eta C_\tau  + 8 \eta L - \frac{\gamma\Fmax}{32} \\
\constantCheck
& < 0\,.
\end{align*}
In other words, the regret is negative if a restart occurs.
\paragraph{Proof summary adversarial setting}
In step 1 we showed that with high probability event $\eventGood$ holds and the algorithm behaves in a predictable fashion.
In step 3 we showed that the regret until time $\tau$ is at most $\frac{\gamma \Fmax}{\eta}$.
In step 4 we showed that the only way $\tau \neq n$ under $\eventGood$ is if the algorithm restarts after round $\tau$ and
in step 5 we showed that if a restart occurs the regret is negative.
The theorem follows by summing the regret over the each restart.

\section{Discussion}

\paragraph{Submodular minimisation}
Our algorithm can be applied to bandit submodular minimisation via the Lov\'asz extension, as explained by \cite{hazan2012online},
who prove a bound of $O(n^{2/3})$. All the conditions of our algorithm are satisfied using this reduction and since $K = [0,1]^d$ can be efficiently positioned
to a scaling of John's position our algorithms manages $d^{3.5} \sqrt{n} \polylog(n, d, 1/\delta)$ in the adversarial setting and $d^{1.5} \sqrt{n} \polylog(n, d, 1/\delta)$ in
the stochastic setting. 
\paragraph{Constants}
We have chosen not to give explicit constants for the tuning parameters. The situation is a bit unfortunate. 
The theoretically justified constants will yield a rather small learning rate, a large regret bound and an algorithm that converges rather slowly.
On the other hand, if you choose the learning rate $\eta$ or $\lambda$ too large, then the algorithm may fail catastrophically.

\paragraph{Self-concordant barriers} 
Another natural approach would be to use self-concordant barriers as regularizers. Indeed, this yields sublinear regret bounds. We modified the algorithm of \citet{LG23} to use a $\nu$-self concordant barrier rather than the Euclidean norm as a regularizer and added some other tricks. In the stochastic setting, the best regret bound we could prove was $\nu^{1.5}d^{1.5}\sqrt{n}\polylog(n, d, 1/\delta)$. In the adversarial setting, the best bound we could prove was $\nu^4d^{3.5}\sqrt{n}\polylog(n, d, 1/\delta)$. The major technical challenge came from proving that the hessian of the potential was positive definite: with a self-concordant barrier as the regularizer the hessian at $x$ is no longer independent of $x$. This forced us to prove that the hessian of the potential was proportional for all $x$ in the focus region of the algorithm, which is where the $\nu$-factors come from in the aforementioned regret bounds. Since $\nu = O(d)$ for arbitrary $K$ these bounds are considerably worse than the results presented in this paper.

\paragraph{Acknowledgements}
This work was partially done while DvdH was at the University of Amsterdam supported by Netherlands Organization for Scientific Research (NWO), grant number VI.Vidi.192.095. 
Thank you also to our colleagues Tim van Erven, Wouter Koolen and Andr\'as Gy\"orgy for many useful discussions.

\appendix
\crefalias{section}{appendix}
\bibliographystyle{abbrvnat}
\bibliography{all.bib}

\newpage


\section{Proof of Lemma~\ref{lem:bonus}}

Let $E_t = \{s \leq  t : \flat_s \neq \zeros\}$ and
\begin{align*}
c_t = \sum_{s \in E_{t-1}} \norm{\mu_s - z_t}_{\Sigma_s^{-1}}^2 \,.
\end{align*}
We proceed in two steps. First we bound the number of non-zero bonus terms and then we show that the bonuses are sufficiently negative.

\paragraph{Step 1: Number of non-zero bonuses}
For the second part of the lemma we need to upper bound the number of times that $\flat_t \neq \zeros$.
By standard log-determinant arguments,
\begin{align*}
\sum_{t=1}^\tau \sind\left(\flat_t \neq \zeros \text{ and } \norm{\cdot}^2_{\Sigma_t^{-1}} \not \leq \sum_{s\in E_{t-1}} \norm{\cdot}_{\Sigma_s^{-1}}^2\right) \leq \frac{d \logs}{2}\,.
\end{align*}
In more detail, 
let $U_t^{-1} = \sum_{s \in E_t} \Sigma_s^{-1}$ and suppose that $t$ is a round 
where $\flat_t \neq \zeros$ and $\Sigma_t^{-1} \not \preceq U_{t-1}^{-1}$.
Then
\begin{align*}
U_{t-1}^{-1} \preceq U_t^{-1}
= U_{t-1}^{-1} + \Sigma_t^{-1} \not \preceq 2 U_{t-1}^{-1}  \,,
\end{align*}
where in the first inequality we used the fact that $\Sigma_t^{-1} \succeq \zeros$.
Rearranging shows that
\begin{align}
\id \preceq U_{t-1}^{-1/2} U_t U_{t-1}^{-1/2} \not \preceq 2 \id \,.
\label{eq:U-bound}
\end{align}
Therefore, using the facts that $\log \det A^{1/2} B A^{1/2} = \log \det AB$ and for any positive definite $A$ with $\id \preceq A \not\preceq 2 \id$, $\log \det A \geq \log(2)$,
\begin{align*}
\log(2) \sum_{t\in E_{\tau}} \sind\left(\Sigma_t^{-1} \not \preceq U_{t-1}^{-1}\right)
&\explan{(a)}\leq \log(2) + \sum_{t \in E_{\tau}, t > 1} \log \det\left(U_t^{-1} U_{t-1} \right) \\
&\explan{(b)}= \log(2) + \log \det\left(U_{\tau}^{-1} U_1 \right) \\ 
&\explan{(c)}= \log(2) + \log \det \left(\sigma^2 \sum_{s \in E_{\tau}} \Sigma_s^{-1}\right)  \\
&\explan{(d)}\leq \log(2) + \log \det \left(2\sigma^2 \sum_{s \in E_{\tau}} \bar \Sigma_s^{-1}\right) \\
&\explan{(e)}\leq \log(2) + \log \det \left(\frac{2n\sigma^2}{\delta} \id \right) \\
&\explan{(f)}\leq \frac{d L \log(2)}{2} \,.
\end{align*}
where \texttt{(a)} follows from \cref{eq:U-bound} and the fact that for $\id \preceq A \not \preceq 2 \id$, $\log \det A \geq \log(2)$.
\texttt{(b)} follows by telescoping the sum,
\texttt{(c)} by the definition of $U_t$,
\texttt{(d)} by the definition of the stopping time $\tau$,
\texttt{(e)} since $\bar \Sigma_t^{-1} \preceq \frac{\id}{\delta}$,
\texttt{(f)} from the definition of the constants.
Rearranging shows that 
\begin{align*}
\sum_{t \in E_{\tau}} \sind\left(\norm{\cdot}^2_{\Sigma_t^{-1}} \not \leq \sum_{s \in E_{t-1}} \norm{\cdot}_{\Sigma_s^{-1}}^2\right) \leq \frac{d \logs}{2}\,.
\end{align*}
On the other hand, suppose that $\flat_t \neq \zeros$ and $\norm{\cdot}_{\Sigma_t^{-1}}^2 \leq \sum_{s \in E_{t-1}} \norm{\cdot}_{\Sigma_s^{-1}}^2$. 
By the definition of the algorithm we have $c_t \leq \Fmax/24$ and $\norm{\mu_t - z_t}^2_{\Sigma_t^{-1}} \geq \frac{\Fmax}{3}$ and hence
\begin{align*}
c_{t+1}
&= \min_{y \in \R^d} \left(\norm{y - \mu_t}_{\Sigma_t^{-1}}^2 + \sum_{s \in E_{t-1}} \norm{y - \mu_s}_{\Sigma_s^{-1}}^2\right) \\
&\explan{(a)}\geq \min_{y \in \R^d} \left(\norm{y - \mu_t}_{\Sigma_t^{-1}}^2 + \frac{1}{2} \sum_{s \in E_{t-1}} \norm{y - z_t}_{\Sigma_s^{-1}}^2\right) - \sum_{s \in E_{t-1}} \norm{\mu_s - z_t}_{\Sigma_s^{-1}}^2 \\
&\explan{(b)}\geq \min_{y \in \R^d} \left(\norm{y - \mu_t}_{\Sigma_t^{-1}}^2 + \frac{1}{2} \norm{y - z_t}_{\Sigma_t^{-1}}^2\right) - \sum_{s \in E_{t-1}} \norm{\mu_s - z_t}_{\Sigma_s^{-1}}^2 \\
&\explan{(c)}\geq \frac{1}{4} \norm{z_t - \mu_t}_{\Sigma_t^{-1}}^2 - \sum_{s \in E_{t-1}} \norm{\mu_s - z_t}_{\Sigma_s^{-1}}^2 \\
&\explan{(d)}= \frac{1}{4} \norm{z_t - \mu_t}_{\Sigma_t^{-1}}^2 - c_t \\
&\explan{(e)}\geq \frac{\Fmax}{24}\,.
\end{align*}
where \texttt{(a)} follows from the triangle inequality and the fact that $(a+b)^2 \leq 2a^2 + 2b^2$.
\texttt{(b)} uses the condition that $\norm{\cdot}_{\Sigma_t^{-1}}^2 \leq \sum_{s \in E_{t-1}} \norm{\cdot}_{\Sigma_s^{-1}}^2$.
\texttt{(c)} is another triangle inequality and \texttt{(d)} is the definition of $c_t$ and \texttt{(e)} follows by the assumption that $c_t < \Fmax/8$.
Therefore the total number of rounds when $\flat_t \neq \zeros$ is at most $\frac{1}{2} d \logs + 1 \leq d \logs$.

\paragraph{Step 2: Magnitude of bonuses}
Let $t \leq \tau$ and $x$ satisfy $\norm{x - \mu_t}^2_{\Sigma_t^{-1}} \geq \Fmax$. 
Suppose that $c_t \geq \Fmax/24$.
Then, by the definition of $z_t$,
\begin{align*}
\sum_{s \in E_{t-1}} \norm{x - \mu_s}^2_{\Sigma_s^{-1}} \geq \sum_{s \in E_{t-1}} \norm{\mu_s - z_t}^2_{\Sigma_s^{-1}} \geq \frac{\Fmax}{24}\,.
\end{align*}
And the claim follows. On the other hand, if $c_t < \Fmax/24$ and $\flat_t = \zeros$, then
\begin{align*}
\sum_{s \in E_{t-1}} \norm{x - \mu_s}^2_{\Sigma_s^{-1}}
&\explan{(a)}\geq \frac{1}{2} \sum_{s \in E_{t-1}} \norm{x - z_t}^2_{\Sigma_s^{-1}} - \sum_{s \in E_{t-1}} \norm{z_t - \mu_s}_{\Sigma_s^{-1}}^2 \\
&\explan{(b)}\geq \frac{1}{2} \sum_{s \in E_{t-1}} \norm{x - z_t}^2_{\Sigma_s^{-1}} - \frac{\Fmax}{24} \\
&\explan{(c)}\geq \frac{1}{2} \norm{x - z_t}^2_{\Sigma_t^{-1}} - \frac{\Fmax}{24} \\
&\explan{(d)}\geq \frac{1}{4} \norm{x - \mu_t}^2_{\Sigma_t^{-1}} - \frac{1}{2} \norm{z_t - \mu_t}^2_{\Sigma_t^{-1}} - \frac{\Fmax}{24} \\
&\explan{(e)}\geq \frac{\Fmax}{24}\,,
\end{align*}
where \textit{(a)} follows from the triangle inequality and because $(a + b)^2 \leq 2a^2 + 2b^2$.
\textit{(b)} follows from the assumption that $c_t < \Fmax/24$.
\textit{(c)} follows because $\flat_t = \zeros$ so that $\sum_{s \in E_{t-1}} \norm{\cdot}_{\Sigma_s^{-1}}^2 > \norm{\cdot}_{\Sigma_t^{-1}}^2$.
\textit{(d)} uses the same argument as \textit{(a)}
and \textit{(e)} follows because $\flat_t = \zeros$ so that $\norm{z_t - \mu_t}_{\Sigma_t^{-1}}^2 \leq \Fmax/3$.

\section{Proof of Lemma~\ref{lem:f-squared}}
By the definition of $\tau$, for $t \leq \tau$, 
\begin{align}
\Sigma_t \preceq 2 \bar \Sigma_t \preceq 4 \bar \Sigma_1 = 4 \sigma^2 \id \,.
\label{eq:lem:f-squared-0}
\end{align}
By a union bound and Lemma~\ref{lem:Ybounds}, with probability at least $1 - n \delta$, for all $t \leq \tau$,
\begin{align*}
Y_t \leq 4 L + 2 v(X_t)  \,.
\end{align*}
On this event, using the fact that $(a+b)^2 \leq 2a^2 + 2b^2$,
\begin{align}
M_\tau 
= \sum_{t=1}^\tau Y_t^2 
\leq 32 n L^2 + 8 \sum_{t=1}^\tau v(X_t)^2 \,.
\label{eq:lem:f-squared-1}
\end{align}
By Corollary~\ref{cor:poincare-ish},
\begin{align}
\sum_{t=1}^\tau \E_{t-1}[v(X_t)^2]
&\explan{(a)}\leq n + \sum_{t=1}^\tau \frac{2M \sqrt{d \norm{\Sigma_t}}}{\epsilon} \tr\left(\Sigma_t \E_{t-1}[v''(X_t)]\right) \nonumber \\
&\explan{(b)}\leq n + \sum_{t=1}^\tau \frac{4 \sigma M \sqrt{d}}{\epsilon} \tr\left(\Sigma_t \E_{t-1}[v''(X_t)]\right) \nonumber \\
&\explan{(c)}\leq n + \sum_{t=1}^\tau \frac{4 \sigma M \sqrt{d}}{\epsilon} \tr\left(\Sigma_t \E_{t-1}[f''(X_t)]\right) \nonumber \\
&\explan{(d)}\leq n + \frac{8 \sigma M \sqrt{d}}{\epsilon \lambda } \sum_{t=1}^\tau \tr\left(\Sigma_t s_t''(\mu_t)\right) + \sum_{t=1}^\tau \frac{4 \delta \sigma M \sqrt{d}}{\epsilon} \tr(\Sigma_t + \id) \nonumber \\
&\explan{(e)}\leq \frac{3n}{2} + \frac{8  \sigma M \sqrt{d}}{\epsilon \lambda } \sum_{t=1}^\tau \tr\left(\Sigma_t \bar H_t\right) \nonumber \\
&\explan{(f)}\leq \frac{3n}{2} + \frac{8 \sigma  d^{3/2} ML}{\epsilon \lambda \eta} \nonumber \\
&\explan{(g)}\leq 2 n \,,
\label{eq:lem:f-squared}
\end{align}
where \texttt{(a)} follows from Corollary~\ref{cor:poincare-ish},
\texttt{(b)} from \cref{eq:lem:f-squared-0},
\texttt{(c)} since $f_t'' \succeq v''$,
\texttt{(d)} from Proposition~\ref{prop:hess},
\texttt{(e)} since $s_t''(\mu_t) = \bar H_t$ and naive simplification using $\delta$,
\texttt{(f)} follows from Lemma~\ref{lem:logdet}.
Moving on, since $\mu_t \in K_\epsilon$ and using Lemma~\ref{lem:lipschitz} and the definition $v(x) = (\pip(x) - 1)/\epsilon$ 
we have 
\begin{align*}
v(X_t) \leq v(\mu_t) + \frac{2}{\epsilon} \norm{X_t - \mu_t} = \frac{2}{\epsilon} \norm{X_t - \mu_t}\,.
\end{align*}
Define an event
\begin{align*}
E_t = \left\{ v(X_t)^2 \leq \frac{d\sigma^2 L}{\epsilon^2} \right\}\,.
\end{align*}
By \cref{eq:lem:f-squared-0}, $\Sigma_t \preceq 4 \sigma^2 \id$ for $t \leq \tau$. Hence, by Lemma~\ref{lem:gaussian-norm},
\begin{align*}
\bbP\left(\bigcup_{t=1}^\tau E_t^c\right) \leq \delta \,.
\end{align*}
Therefore, with probability at least $1 - \delta$,
\begin{align*}
\sum_{t=1}^\tau v(X_t)^2 = \sum_{t=1}^\tau \sind_{E_t} v(X_t)^2 \,.
\end{align*}
Hence, by Corollary~\ref{cor:conc}, with probability at least $1 - \delta$ and with $\nu = \frac{\epsilon^2}{d\sigma^2  L}$,
\begin{align*}
\sum_{t=1}^\tau \sind_{E_t} v(X_t)^2 
&\leq 2\sum_{t=1}^\tau \E_{t-1}[v(X_t)^2] + \frac{1}{\nu} \log\left(\frac{1}{\delta}\right) 
\leq 4n + \frac{d\sigma^2 L^2}{\epsilon^2} 
\constantCheck
\leq 5n \,.
\end{align*}
where in the last two steps we used \cref{eq:lem:f-squared} and the definition of the constants.
Combining this with a union bound and \cref{eq:lem:f-squared-1} shows that with probability at least $1 - (2 + n)\delta$,
\begin{align*}
M_\tau 
\leq 32 nL^2 + 8 \sum_{t=1}^\tau v(X_t)^2 
\leq 32 nL^2 + 40n \leq 100 n L^2\,.
\end{align*}
On the same event, by Lemma~\ref{lem:Ybounds} and \cref{eq:lem:f-squared} it follows also that
\begin{align*}
\bar M_\tau 
&= \sum_{t=1}^\tau \E_{t-1}[Y_t]^2
\leq 24 \sum_{t=1}^\tau \E_{t-1}[v(X_t)^2] + 12n
\leq 100 n L^2\,.
\end{align*}

\section{Proof of Lemma~\ref{lem:cov}}\label{sec:lem:cov}
By definition, 
\begin{align*}
\Phi_t(x) 
&= \frac{\norm{x}^2}{2\sigma^2} + \eta \sum_{s=1}^t \hat q_s(x) + \sum_{s=1}^t \flat_s(x)  \\
&= \Phi_{t-1}(x) + \eta \hat q_t(x) + \flat_t(x)  \\
&= \Phi_{t-1}(x) + \eta \hat q_t(x) - \gamma_t \norm{x - x_t}^2_{\Sigma_t^{-1}} \,.
\end{align*}
Note that $\Phi_t$ is quadratic, so we will use $\Phi_t''$ for its Hessian at any point. By definition, $\Sigma_{t+1}^{-1} = \Phi''_t$ and therefore 
\begin{align*}
\Sigma_{t+1}^{-1} = \Phi_t'' = \Phi''_{t-1} + \frac{\eta H_t}{2} - 2\gamma_t \Sigma_t^{-1} = (1 - 2\gamma_t) \Sigma_t^{-1} + \frac{\eta H_t}{2}\,.
\end{align*}
By induction it follows that
\begin{align*}
    \Sigma_t^{-1} = \frac{w_{t-1} \id}{\sigma^2} + w_{t-1} \frac{\eta}{2} \sum_{u=1}^{t-1} \frac{H_u}{w_u} \,.
\end{align*}
Let $E$ be the event $\{Y_t \leq \frac{10L}{\epsilon}\} \cap \eventFSquared{}$. By Lemma~\ref{lem:Ybounds} and Lemma~\ref{lem:f-squared} and a union bound,
$\bbP(E) \geq 1 - 5n\delta$.
Combining this with Lemma~\ref{lem:s:conc-H-sum} and a union bound shows that with probability at least $1 - 6n \delta$ for all $\Sigma^{-1}$ such that
$\Sigma_t^{-1} \preceq \Sigma^{-1}$ for all $t \leq \tau$,
\begin{align}
\eta \left|\sum_{s=1}^\tau \frac{H_s - \bar H_s}{w_s}\right| 
\preceq \eta \lambda L\left(\sqrt{d \bar M_\tau} + d^2 \Ymax\right) \Sigma^{-1} 
\preceq  10 \eta \lambda L^2\left(\sqrt{d n} + \frac{d^2}{\epsilon}\right) \Sigma^{-1}
\preceq \frac{1}{8} \Sigma^{-1}\,,
\label{eq:sigma}
\end{align}
where in the final inequality we used the definition of the constants.
We assume for the remainder that the above holds and prove that on this event the required bounds on the covariance matrices hold, which establishes the lemma.
By the definition of $\tau$, for any $s \leq \tau$,
\begin{align*}
\Sigma_s^{-1}
\preceq 2 \bar \Sigma_{s}^{-1}
\preceq 4 \bar \Sigma_{\tau+1}^{-1}\,,
\end{align*}
where in the first inequality we used the definition of $\tau$
and in the second we used that $\bar H_t \succeq \zeros$ and $w_t \in [1/2,1]$ for all $t \leq \tau$. 
Therefore, by \cref{eq:sigma} with $\Sigma^{-1} = 8 \bar \Sigma_{\tau+1}^{-1}$,
\begin{align*}
\Sigma_{\tau+1}^{-1} 
= \frac{w_\tau \id}{\sigma^2} + \frac{\eta}{2} w_\tau \sum_{s=1}^\tau \frac{H_s}{w_s} 
\preceq \frac{w_\tau \id}{\sigma^2} + \frac{\eta}{2} w_\tau \sum_{s=1}^\tau \frac{\bar H_s}{w_s} + \frac{1}{2}\bar \Sigma_{\tau+1}^{-1} 
= \frac{3}{2} \bar \Sigma_{\tau+1}^{-1}\,.
\end{align*}
Similarly,
\begin{align*}
\Sigma_{\tau+1}^{-1} 
= \frac{w_\tau \id}{\sigma^2} + \frac{\eta}{2} w_\tau \sum_{s=1}^\tau \frac{H_s}{w_s}
\succeq\frac{w_\tau \id}{\sigma^2} + \frac{\eta}{2} w_\tau \sum_{s=1}^\tau \frac{\bar H_s}{w_s} - \frac{1}{2} \bar \Sigma_{\tau+1}^{-1}
= \frac{1}{2} \bar \Sigma_{\tau+1}^{-1}  \,.
\end{align*}

\section{Proof of Lemma~\ref{lem:regret-reduction}}\label{app:regret-reduction}
We start with a standard concentration argument.
Since $\ell_t(A_t) \in [0,1]$, by Hoeffding--Azuma's inequality, with probability at least $1 - \delta$,
\begin{align*}
\left|\sum_{t=1}^n \left(\ell_t(A_t) - \E_{t-1}[\ell_t(A_t)]\right)\right| \leq \sqrt{nL}\,.
\end{align*}
On this event, 
\begin{align}
\Reg_n 
&= \max_{x \in K} \sum_{t=1}^n (\ell_t(A_t) - \ell_t(x)) \nonumber \\
&\leq \sqrt{nL} + \max_{x \in K} \sum_{t=1}^n (\E_{t-1}[\ell_t(A_t)] - \ell_t(x)) \nonumber \\
&= \sqrt{nL} + \max_{x \in K} \sum_{t=1}^n (\E_{t-1}[\ell_t(X_t / \pip(X_t))] - \ell_t(x)) \nonumber  \\
&\leq \sqrt{nL} +  \epsilon n + \max_{x \in K_{\epsilon}} \sum_{t=1}^n (\E_{t-1}[\ell_t(X_t / \pip(X_t))] - \ell_t(x))  \nonumber \\
&\leq \sqrt{nL} + \epsilon n + \max_{x \in K_\epsilon} \sum_{t=1}^n (\E_{t-1}[f_t(X_t)] - f_t(x))  \nonumber \\
&= \sqrt{nL} + n\epsilon + \max_{x \in K_\epsilon} \fReg_n(x) \,,
\end{align}
where the second inequality follows because the losses are bounded in $[0,1]$ on $K$ and using \citep[Proposition 3.7]{L24}.
The last inequality follows from Lemma~\ref{lem:extend} and Remark~\ref{rem:extend}.

\section{Proof of Lemma~\ref{lem:logdet}}\label{app:lem:logdet}

This result is more-or-less a consequence of the standard trace/log-determinant inequality. There is a little delicacy needed in arranging a suitable telescoping sum, however.
Let
\begin{align*}
\tilde \Sigma_t^{-1} = \frac{1}{4} \left[\frac{\id}{\sigma^2} + \sum_{u=1}^{t-1} \bar H_u\right] \,.
\end{align*}
Recall that 
\begin{align*}
\bar \Sigma_t^{-1} = w_{t-1} \left[\frac{\id}{\sigma^2} + \sum_{u=1}^{t-1} \frac{\bar H_u}{w_u}\right] \,,
\end{align*}
where the weights $(w_u)$ are in $[1/2,1]$ almost surely. Hence, for any $t \leq \tau$,
\begin{align}
\tilde \Sigma_t^{-1} &\preceq \frac{1}{2} \bar \Sigma_t^{-1} \preceq \Sigma_t^{-1}\,, &
\tilde \Sigma_t^{-1} &\succeq \frac{1}{8} \bar \Sigma_t^{-1} \succeq \frac{1}{16} \Sigma_t^{-1} \,.
\label{eq:logdet-ineq}
\end{align}
with both of the second inequalities following from the definition of the stopping time $\tau$.
Therefore, by Lemma~\ref{lem:spec}, Lemma~\ref{lem:hess}  
\begin{align}
\eta \norm{\tilde \Sigma_t^{1/2} \bar H_t \tilde \Sigma_t^{1/2}} 
\leq 16 \eta \norm{\Sigma_t^{1/2} \bar H_t \Sigma_t^{1/2}}
\leq \frac{16 \eta\lambda \lip(f_t)}{1-\lambda} \sqrt{d \norm{\Sigma_t}} 
\leq \frac{32 \eta\lambda \lip(f_t)}{1-\lambda} \sqrt{d \sigma^2} \leq 1 \,,
\label{eq:spec}
\end{align}
where in the first inequality we used Lemma~\ref{lem:spec},
the second follows from Lemma~\ref{lem:hess},
the third from Lemma~\ref{lem:sigma} and the last from Lemma~\ref{lem:lip} and
the definition of the constants.
We are now ready to bound the quantity of interest:
\begin{align*}
\sum_{t=1}^\tau \tr(\bar H_t \Sigma_t)
&\stackrel{(a)}\leq \frac{1}{\eta} \sum_{t=1}^\tau \tr(\eta \tilde \Sigma_t^{1/2} \bar H_t \tilde \Sigma_t^{1/2}) 
\explan{(b)}\leq \frac{1}{\eta} \sum_{t=1}^\tau \log \det\left(\id + \eta \tilde \Sigma_t^{1/2} \bar H_t \tilde \Sigma_t^{1/2}\right) \\
&= \frac{1}{\eta} \sum_{t=1}^\tau \log \det\left(\tilde \Sigma_t \left(\tilde \Sigma_t^{-1} + \eta \bar H_t\right)\right) 
= \frac{1}{\eta} \sum_{t=1}^\tau \log \det\left(\tilde \Sigma_t \tilde \Sigma_{t+1}\right) \\ 
&= \frac{1}{\eta} \log \det\left(\tilde \Sigma_1 \tilde \Sigma_{\tau+1}^{-1}\right) 
= \frac{1}{\eta} \log \det\left(\id + 4 \sigma^2 \sum_{u=1}^{t-1} \bar H_u\right) \\ 
&\explan{(c)}\leq \frac{d}{\eta} \log \left(1 + \frac{4 \sigma^2}{d} \sum_{u=1}^{t-1} \tr(\bar H_u)\right) 
\explan{(d)}\leq \frac{d L}{\eta}\,,
\end{align*}
where in \textit{(a)} we used \cref{eq:logdet-ineq} and the fact that for positive definite $A, B, M$ with $A \preceq B$, $\tr(AM) \leq \tr(BM) = \tr(B^{1/2} M B^{1/2})$. 
In \textit{(b)} we used \cref{eq:spec} and the fact that for positive definite $A$ with $\norm{A} \leq 1$, $\tr(A) \leq \log \det(\id + A)$, which in turn
follows from the inequality $x \leq \log(1+x)$ for $x \in [0,1]$. 
\textit{(c)} uses the inequality $\log \det A \leq d \log(\tr(A)/d)$, which is a consequence of the
arithmetic/geometric mean inequality.
Finally, \textit{(d)} follows because $\bar H_t = s_t''(\mu_t) \preceq \frac{1}{\delta} \id$ by Lemma~\ref{lem:sigma}.

\section{Technical lemmas}\label{app:tech}

\begin{lemma}\label{lem:spec}
Suppose that $A \preceq B$ and $M$ is positive definite. Then $\norm{A^{1/2} M A^{1/2}} \leq \norm{B^{1/2} M B^{1/2}}$.
\end{lemma}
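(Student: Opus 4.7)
The plan is to reduce the claim to the elementary fact that the operator norm is monotone with respect to the Loewner order on positive semi-definite matrices. Specifically, for any PSD matrices $P \preceq Q$, we have $\norm{P} = \lambda_{\max}(P) = \sup_{\norm{x}=1} x^\top P x \leq \sup_{\norm{x}=1} x^\top Q x = \norm{Q}$, so if I can rewrite both quantities as operator norms of PSD matrices that are comparable in the Loewner order, I am done.

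The first step is the key algebraic identity $\norm{A^{1/2} M A^{1/2}} = \norm{M^{1/2} A M^{1/2}}$. This follows from the standard fact that $\norm{XX^\top} = \norm{X^\top X} = \norm{X}^2$ for any matrix $X$: taking $X = M^{1/2} A^{1/2}$ gives $\norm{M^{1/2} A M^{1/2}} = \norm{XX^\top} = \norm{X^\top X} = \norm{A^{1/2} M A^{1/2}}$. The same identity applied to $B$ yields $\norm{B^{1/2} M B^{1/2}} = \norm{M^{1/2} B M^{1/2}}$.

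The second step is to observe that since $A \preceq B$, conjugating both sides by $M^{1/2}$ (which is self-adjoint) preserves the order, giving $M^{1/2} A M^{1/2} \preceq M^{1/2} B M^{1/2}$. Combined with monotonicity of the operator norm on PSD matrices, this yields
\begin{equation*}
\norm{A^{1/2} M A^{1/2}} = \norm{M^{1/2} A M^{1/2}} \leq \norm{M^{1/2} B M^{1/2}} = \norm{B^{1/2} M B^{1/2}},
\end{equation*}
which is the claim. There is no real obstacle here — the only subtle point is remembering the identity $\norm{XX^\top} = \norm{X^\top X}$, which is what lets us swap the role of $A$ (or $B$) and $M$ so that conjugation lands on the comparable matrices rather than on the fixed one.
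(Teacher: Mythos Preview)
Your proof is correct. It takes a genuinely different route from the paper's argument, however. The paper proceeds geometrically: it observes that $A \preceq B$ implies the ellipsoid inclusion $A^{1/2}\ball(1) \subset B^{1/2}\ball(1)$, and then writes $\norm{A^{1/2} M A^{1/2}} = \max_{y \in A^{1/2}\ball(1)} y^\top M y$, so that enlarging the feasible set to $B^{1/2}\ball(1)$ can only increase the maximum. Your approach is purely algebraic: the identity $\norm{A^{1/2} M A^{1/2}} = \norm{M^{1/2} A M^{1/2}}$ (via $\norm{X X^\top} = \norm{X^\top X}$) shifts the conjugation onto the matrix that varies, after which the Loewner order $M^{1/2} A M^{1/2} \preceq M^{1/2} B M^{1/2}$ and monotonicity of $\lambda_{\max}$ finish immediately. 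Your argument is arguably cleaner and avoids the (mildly nontrivial) verification that $A \preceq B$ really gives the ellipsoid inclusion; the paper's argument, on the other hand, makes the geometric content of the inequality transparent.
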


\begin{proof}
Note that both $A^{1/2} M A^{1/2}$ and $B^{1/2} M B^{1/2}$ are positive definite. Since $A \preceq B$, it follows that
$A^{1/2} \ball(1) \subset B^{1/2} \ball(1)$. Therefore
\begin{align*}
\norm{B^{1/2} M B^{1/2}}
&= \max_{x \in \ball(1)} x^\top B^{1/2} M B^{1/2} x \\
&= \max_{y \in B^{1/2} \ball(1)} y^\top M y \\ 
&\geq \max_{y \in A^{1/2} \ball(1)} y^\top M y \\
&= \max_{x \in \ball(1)} x^\top A^{1/2} M A^{1/2} x \\
&= \norm{A^{1/2} M A^{1/2}} \,.
\end{align*}
\end{proof}

\begin{lemma}[Theorem 5.2.2, \citealt{Ver18}]\label{lem:lip-conc}
Let $h : \R^d \to \R$ and $\delta \in (0,1)$ and $X$ have law $\cN(\mu, \Sigma)$. Then, with probability at least $1 - \delta$,
\begin{align*}
\left|\E[h(X)] - h(X)\right| \leq C \lip(h) \sqrt{\norm{\Sigma} \log(1/\delta)}
\end{align*}
\end{lemma}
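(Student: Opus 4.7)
The plan is to reduce to the classical Gaussian concentration inequality for Lipschitz functions of a standard Gaussian vector (Borell--TIS, or equivalently the Tsirelson--Ibragimov--Sudakov inequality), which states that if $Z \sim \cN(\zeros, \id)$ and $\varphi : \R^d \to \R$ is $L_\varphi$-Lipschitz, then $\varphi(Z) - \E[\varphi(Z)]$ is $L_\varphi^2$-subgaussian, and in particular
\begin{align*}
\bbP\bigl(|\varphi(Z) - \E[\varphi(Z)]| \geq t\bigr) \leq 2 \exp\!\left(-\frac{t^2}{2 L_\varphi^2}\right).
\end{align*}

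The reduction is straightforward. Write $X = \mu + \Sigma^{1/2} Z$ where $Z \sim \cN(\zeros, \id)$, and set $\varphi(z) = h(\mu + \Sigma^{1/2} z)$. First I would bound $\lip(\varphi)$: for any $z, z' \in \R^d$,
\begin{align*}
|\varphi(z) - \varphi(z')| \leq \lip(h)\, \bigl\Vert \Sigma^{1/2}(z - z') \bigr\Vert \leq \lip(h)\, \bigl\Vert \Sigma^{1/2} \bigr\Vert \, \norm{z - z'} = \lip(h)\sqrt{\norm{\Sigma}}\, \norm{z - z'},
\end{align*}
using that the operator norm of $\Sigma^{1/2}$ equals $\sqrt{\norm{\Sigma}}$. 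So $\lip(\varphi) \leq \lip(h)\sqrt{\norm{\Sigma}}$, and $\E[\varphi(Z)] = \E[h(X)]$.

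Applying the Borell--TIS tail bound to $\varphi$ with $t = C \lip(h)\sqrt{\norm{\Sigma} \log(1/\delta)}$ and choosing the universal constant $C$ large enough so that $2\exp(-t^2 / (2 L_\varphi^2)) \leq \delta$ yields the claimed high-probability bound. There is no serious obstacle here: the only technical remark is that $h$ need only be Lipschitz (not necessarily smooth), and Borell--TIS applies directly at this generality, so no smoothing argument is required. This is exactly the content of Theorem 5.2.2 of \citet{Ver18}, which is why the result is simply cited in the statement.
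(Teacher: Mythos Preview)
Your proposal is correct and is precisely the standard argument behind Theorem~5.2.2 of \citet{Ver18}. The paper does not give its own proof of this lemma at all: it is stated purely as a citation to Vershynin's textbook, so there is nothing further to compare.
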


\begin{lemma}\label{lem:gaussian-norm}
Let $W$ have law $\cN(\zeros, \id)$. Then for any $\delta \in (0,1)$, $\bbP(\norm{W}_{2} \geq  2\sqrt{2d/3 \log(2/\delta)}) \leq \delta$. 
\end{lemma}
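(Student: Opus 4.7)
The plan is to apply Chernoff's method to the chi-squared random variable $\norm{W}^2 = \sum_{i=1}^d W_i^2$. First I would recall the standard moment generating function computation: since the coordinates $W_i$ are i.i.d.\ standard Gaussians,
\begin{align*}
\E[\exp(\lambda \norm{W}^2)] = \prod_{i=1}^d \E[\exp(\lambda W_i^2)] = (1-2\lambda)^{-d/2}
\end{align*}
for every $\lambda \in [0, 1/2)$, by direct integration against the Gaussian density.

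Next I would apply Markov's inequality to $\exp(\lambda \norm{W}^2)$ with the convenient choice $\lambda = 3/8$. This pins the MGF at $(1-2\lambda)^{-d/2} = 4^{d/2} = 2^d$ and produces the tail bound
\begin{align*}
\bbP(\norm{W}^2 \geq u) \leq 2^d \exp(-3u/8).
\end{align*}
Substituting $u = (8d/3)\log(2/\delta)$ makes $\exp(-3u/8) = (\delta/2)^d$, so the right-hand side becomes $2^d (\delta/2)^d = \delta^d$, which is bounded by $\delta$ for every $d \geq 1$ and $\delta \in (0,1)$. Taking square roots of both sides of the event $\{\norm{W}^2 \geq u\}$ gives exactly the statement of the lemma.

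There is essentially no real obstacle here. The only calibration needed is the choice of Chernoff parameter $\lambda$: the value $\lambda = 3/8$ is picked so that the $d \log 2$ contribution arising from the MGF matches the $(8d/3) \log 2$ term obtained by expanding $\log(2/\delta) = \log 2 + \log(1/\delta)$, which then produces the claimed constant $2\sqrt{2/3}$ inside the square root. Other reasonable choices like $\lambda = 1/4$ would yield a qualitatively identical bound with slightly worse numerical constants; the $\delta^d \leq \delta$ slack above shows that the stated bound is comfortably satisfied.
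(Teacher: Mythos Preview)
Your proof is correct and follows essentially the same route as the paper: the paper's one-line argument plugs a sub-exponential norm bound for $W_i^2$ into Vershynin's Proposition~2.7.1, which is itself just the Chernoff/MGF tail bound for sums of sub-exponentials, i.e.\ exactly what you carry out by hand with $\lambda=3/8$. Your version has the minor advantage of being self-contained rather than deferring to two external references.
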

\begin{proof}
    The proof follows by plugging Fact 11a) from \cite{LG23} into Lemma 12a of the same work (i.e. Proposition 2.7.1 of \cite{Ver18}).
\end{proof}

\begin{lemma}[Proposition 2.7.1, \cite{Ver18}]\label{lem:subgaussian}
Let $X$ be a random variable such that $\E[X] = 0$ and $\E[\exp(X^2)] \leq 2$. Then 
\begin{align*}
\bbP(|X| \geq \sqrt{\log(2/\delta)}) \leq \delta \,.
\end{align*}
\end{lemma}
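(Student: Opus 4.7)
The plan is to invoke Markov's inequality applied to the nonnegative random variable $\exp(X^2)$. Concretely, for any threshold $t > 0$, I would write
\begin{align*}
\bbP(|X| \geq t) = \bbP(\exp(X^2) \geq \exp(t^2)) \leq \frac{\E[\exp(X^2)]}{\exp(t^2)} \leq \frac{2}{\exp(t^2)} \,,
\end{align*}
where the first equality uses that $x \mapsto \exp(x^2)$ is strictly increasing on $[0,\infty)$ and the last inequality uses the Orlicz-norm assumption $\E[\exp(X^2)] \leq 2$. Setting $t = \sqrt{\log(2/\delta)}$ makes $\exp(t^2) = 2/\delta$, which yields the claimed bound $\bbP(|X| \geq t) \leq \delta$.

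There is no real obstacle: the mean-zero hypothesis is not even needed for this one-sided tail statement, and the proof is just one line of Markov plus a choice of $t$. The only thing worth remarking is that the statement could equivalently have been written $\bbP(|X| \geq t) \leq 2 \exp(-t^2)$, which makes it transparent that this is a subgaussian tail (since taking $t = \sqrt{\log(2/\delta)}$ gives the probability bound $\delta$); the form written in the lemma is convenient because it directly produces the high-probability statements used elsewhere in the paper in terms of the failure probability $\delta$.
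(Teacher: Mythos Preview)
Your argument is correct: Markov's inequality applied to $\exp(X^2)$ together with the Orlicz-norm hypothesis immediately yields $\bbP(|X|\geq t)\leq 2\exp(-t^2)$, and the choice $t=\sqrt{\log(2/\delta)}$ gives the stated bound. The paper does not supply its own proof of this lemma---it is quoted verbatim from \cite{Ver18} as Proposition~2.7.1---and your one-line Markov argument is exactly the standard derivation; your observation that the mean-zero hypothesis is superfluous for this particular tail statement is also correct.
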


The following is a standard Bernstein-like concentration inequality:

\begin{lemma}\label{lem:conc}
[Exercise 5.15, \citealt{LS20book}]
Let $X_1,\ldots,X_n$ be a sequence of non-negative random variables adapted to a filtration $(\sF_t)_{t=1}^n$ and $\tau$ a stopping time.
Then, for any $\nu$ such that $\nu X_t \leq 1$ almost surely and $\delta \in (0,1)$, with probability at least $1 - \delta$,
\begin{align*}
\sum_{t=1}^\tau X_t \leq \sum_{t=1}^\tau \E[X_t|\sF_{t-1}] + \nu \sum_{t=1}^\tau \E[X_t^2|\sF_{t-1}] + \frac{\log(1/\delta)}{\nu}\,.
\end{align*}
\end{lemma}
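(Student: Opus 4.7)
The plan is to use the standard exponential supermartingale method (sometimes called the ``geometric'' or Freedman-style approach), combined with optional stopping and Markov's inequality.

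The first step is to exploit the inequality $e^y \leq 1 + y + y^2$ valid for $y \leq 1$. Since $\nu X_t \leq 1$ almost surely and $X_t \geq 0$, this gives pointwise
\begin{align*}
\exp(\nu X_t) \leq 1 + \nu X_t + \nu^2 X_t^2 \,.
\end{align*}
Taking the conditional expectation with respect to $\sF_{t-1}$ and using $1 + a \leq e^a$ yields
\begin{align*}
\E[\exp(\nu X_t) \mid \sF_{t-1}] \leq \exp\!\left(\nu \E[X_t \mid \sF_{t-1}] + \nu^2 \E[X_t^2 \mid \sF_{t-1}]\right) \,.
\end{align*}

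Next, I would define the process
\begin{align*}
M_t = \exp\!\left(\nu \sum_{s=1}^t X_s - \nu \sum_{s=1}^t \E[X_s \mid \sF_{s-1}] - \nu^2 \sum_{s=1}^t \E[X_s^2 \mid \sF_{s-1}]\right)
\end{align*}
with $M_0 = 1$. The bound above shows $\E[M_t \mid \sF_{t-1}] \leq M_{t-1}$, so $(M_t)$ is a non-negative supermartingale with $\E[M_0] = 1$. Since $\tau \leq n$ is a bounded stopping time, the optional stopping theorem gives $\E[M_\tau] \leq 1$.

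Applying Markov's inequality to $M_\tau$ then yields
\begin{align*}
\bbP\!\left(M_\tau \geq \tfrac{1}{\delta}\right) \leq \delta\,,
\end{align*}
and unwrapping the exponential on the complementary event and dividing through by $\nu$ gives exactly the claimed bound. The main (and only) subtlety is verifying the pointwise inequality $e^y \leq 1 + y + y^2$ for $y \leq 1$ (it is immediate from a Taylor comparison, since the remainder series $\sum_{k \geq 3} y^k/k!$ is bounded by $y^2 \sum_{k \geq 1} 1/(k+2)!$ which is less than one for $y\leq 1$), and ensuring that optional stopping applies, which is trivial here because $\tau$ is almost surely bounded. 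No further concentration machinery is needed.
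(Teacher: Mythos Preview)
Your proof is correct and is exactly the standard exponential supermartingale / Freedman-style argument that the cited exercise intends; the paper itself does not provide a proof but simply references Exercise~5.15 of \cite{LS20book}. One nitpick: in your justification of $e^y \leq 1 + y + y^2$ for $y \in [0,1]$ you need $\sum_{k \geq 1} 1/(k+2)! \leq 1/2$ (so that the tail is at most $y^2/2$), not merely that it is less than one; since the sum equals $e - 5/2 \approx 0.218$ this is of course fine.
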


\begin{corollary}\label{cor:conc}
Under the same conditions as Lemma~\ref{lem:conc},
\begin{align*}
\sum_{t=1}^\tau X_t \leq 2 \sum_{t=1}^\tau \E[X_t|\sF_{t-1}] + \frac{\log(1/\delta)}{\nu}\,. 
\end{align*}
\end{corollary}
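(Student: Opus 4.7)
The plan is to derive the corollary directly from Lemma~\ref{lem:conc} by bounding the variance-like term $\sum_{t=1}^\tau \E[X_t^2 | \sF_{t-1}]$ in terms of the first moment term $\sum_{t=1}^\tau \E[X_t | \sF_{t-1}]$ using the almost-sure upper bound $\nu X_t \leq 1$.

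The key observation is that because $X_t \geq 0$ and $\nu X_t \leq 1$ almost surely, we have $X_t^2 = X_t \cdot X_t \leq X_t / \nu$ almost surely. Taking conditional expectation with respect to $\sF_{t-1}$ on both sides gives $\E[X_t^2 | \sF_{t-1}] \leq \E[X_t | \sF_{t-1}] / \nu$. Summing over $t$ up to the stopping time $\tau$ yields
\begin{align*}
\nu \sum_{t=1}^\tau \E[X_t^2 | \sF_{t-1}] \leq \sum_{t=1}^\tau \E[X_t | \sF_{t-1}]\,.
\end{align*}

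Substituting this bound into the conclusion of Lemma~\ref{lem:conc} replaces the variance-like term with a copy of the expected-value term, producing a total coefficient of $2$ in front of $\sum_{t=1}^\tau \E[X_t | \sF_{t-1}]$, exactly as claimed. There is no serious obstacle here: the only subtle point is checking that non-negativity of $X_t$ is indeed part of the hypothesis of Lemma~\ref{lem:conc} (it is, since the $X_t$ are assumed to be a sequence of non-negative random variables), which is what makes the pointwise bound $X_t^2 \leq X_t / \nu$ valid.
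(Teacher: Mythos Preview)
Your proof is correct and matches the paper's argument exactly: both use $0 \leq \nu X_t \leq 1$ almost surely to bound $\nu \E[X_t^2 \mid \sF_{t-1}] \leq \E[X_t \mid \sF_{t-1}]$ and then substitute into Lemma~\ref{lem:conc}.
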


\begin{proof}
Use the fact that $0 \leq \nu X_t \leq 1$ almost surely to bound $\nu \E[X_t^2 | \sF_{t-1}] \leq \E[X_t|\sF_t]$ and apply Lemma~\ref{lem:conc}.
\end{proof}

The next proposition has the flavour of a Poincar\'e inequality but it exploits convexity in a way that the standard inequality does not.

\begin{proposition}\label{prop:poincare-ish}
Suppose that $h : \R^d \to \R$ is convex, non-negative and $\lip(h) \leq 1$ and $X$ has law $\cN(\mu, \Sigma)$ and $h(\mu) = 0$ and $\norm{\Sigma} \leq 1$. Then,
\begin{align*}
\E[h(X)^2] \leq 1 + \left(\E[h(X)] + \sqrt{\norm{\Sigma} L}\right) \tr(\Sigma \E[h''(X)])\,.
\end{align*}
\end{proposition}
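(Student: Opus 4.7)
The plan is to derive the bound from two classical Gaussian tools applied to the convex Lipschitz function $h$: a convexity-plus-Stein identity bounding $\E[h(X)]$ in terms of the Hessian, and the Gaussian Poincar\'e inequality bounding $\Var(h(X))$. In fact this route yields the slightly stronger statement $\E[h(X)^2] \leq 1 + \E[h(X)] \cdot \tr(\Sigma \E[h''(X)])$; the slack $\sqrt{\norm{\Sigma} L}$ in the stated bound is not needed and is just absorbed for free at the end.

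First I would use convexity and $h(\mu) = 0$. For any $X$ the subgradient inequality yields $0 = h(\mu) \geq h(X) + \nabla h(X)^\top(\mu - X)$, so $h(X) \leq \nabla h(X)^\top (X - \mu)$ almost surely (this is well-defined since the convex function $h$ is differentiable Lebesgue-a.e.\ and $X$ is absolutely continuous). Taking expectations and applying Stein's identity $\E[\partial_i h(X)(X_i - \mu_i)] = \sum_j \Sigma_{ij} \E[\partial_{ij} h(X)]$ componentwise and summing over $i$ gives
\begin{align*}
\E[h(X)] \leq \E\bigl[\nabla h(X)^\top(X - \mu)\bigr] = \tr\bigl(\Sigma \E[h''(X)]\bigr) =: T\,.
\end{align*}

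Next I would invoke Gaussian Poincar\'e. Because $h$ is $1$-Lipschitz and $\norm{\Sigma} \leq 1$,
\begin{align*}
\Var(h(X)) \leq \E\bigl[\nabla h(X)^\top \Sigma \nabla h(X)\bigr] \leq \norm{\Sigma} \cdot \E\bigl[\norm{\nabla h(X)}^2\bigr] \leq \norm{\Sigma} \leq 1\,.
\end{align*}
Writing $\E[h(X)^2] = (\E[h(X)])^2 + \Var(h(X))$, using the previous display together with $h \geq 0$ to get $(\E[h(X)])^2 \leq \E[h(X)] \cdot T$, and combining:
\begin{align*}
\E[h(X)^2] \leq 1 + \E[h(X)] \cdot T \leq 1 + \bigl(\E[h(X)] + \sqrt{\norm{\Sigma} L}\bigr) T\,,
\end{align*}
where the last step drops the non-negative slack $\sqrt{\norm{\Sigma}L}\cdot T$.

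The only technical point is that $h$ is merely convex and Lipschitz, so $\nabla h$ is only defined a.e.\ and $h''$ lives in the distributional sense made precise in the preamble. I would handle this by applying both Stein's identity and Gaussian Poincar\'e to the mollification $h_\rho(x) = \E[h(x + \rho W)]$ with $W \sim \cN(\zeros, \id)$, which is smooth, convex, and $1$-Lipschitz with $h_\rho(\mu) \to h(\mu) = 0$ and $\E[h_\rho''(X)] \to \E[h''(X)]$ as $\rho \to 0$, then passing to the limit by dominated convergence. I do not expect this to be the main obstacle; the essence of the argument is the one-line convexity identity that couples $\E[h]$ to $T$ through Stein's integration by parts.
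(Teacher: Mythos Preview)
Your argument is correct and in fact yields the sharper inequality $\E[h(X)^2] \leq 1 + \E[h(X)]\cdot\tr(\Sigma\E[h''(X)])$, so the $\sqrt{\norm{\Sigma}L}$ slack is indeed unnecessary. The mollification step you outline is the appropriate way to justify Stein and Poincar\'e simultaneously and matches the paper's own convention for $\E[h''(X)]$.

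The route, however, is genuinely different from the paper's. Both proofs share the convexity-plus-Stein step $\E[h(X)] \leq \tr(\Sigma\E[h''(X)])$, but they diverge on how to control the second moment. You decompose $\E[h(X)^2] = (\E[h(X)])^2 + \Var(h(X))$ and kill the variance directly with the Gaussian Poincar\'e inequality, which gives $\Var(h(X)) \leq \norm{\Sigma} \leq 1$ in one line. The paper instead splits on the event $\{h(X) \geq \E[h(X)] + \sqrt{\norm{\Sigma}L}\}$: on the tail event it uses $h(X) \leq \norm{X}$, Cauchy--Schwarz, and Gaussian Lipschitz concentration to bound the contribution by $1$; on the complement it pulls one factor of $h(X)$ out as the deterministic threshold $\E[h(X)] + \sqrt{\norm{\Sigma}L}$ and bounds the remaining $\E[h(X)]$ via the Stein identity. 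Your approach is shorter, avoids the event-splitting and tail bound entirely, and explains why the $\sqrt{\norm{\Sigma}L}$ term is superfluous; the paper's approach is more hands-on but needs that extra slack to absorb the concentration threshold.
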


\begin{proof}
Assume without loss of generality that $\mu = \zeros$ and let $E = \{h(X) \geq \E[h(X)] + \sqrt{\norm{\Sigma} L} \}$,
Then, using the fact that $h$ is Lipschitz and $h(\zeros) = 0$,
\begin{align}
\E[h(X)^2 \sind_{E^c}]
\leq \E[\norm{X}^2 \sind_{E^c}] 
\leq \sqrt{\E[\norm{X}^4 \bbP(E^c)]} 
\leq 1 \,,
\label{eq:prop:poincare-1}
\end{align}
where in the last inequality we used Lemma~\ref{lem:lip-conc} and a large enough choice of $L$ and the assumption that $\norm{\Sigma} \leq 1$ to bound 
$\E[\norm{X}^4]$.
On the other hand,
\begin{align}
\E[h(X)^2 \sind_E]
&\leq \left(\E[h(X)] + \sqrt{\norm{\Sigma} L} \right) \E[h(X)] \nonumber \\
&\leq \left(\E[h(X)] + \sqrt{\norm{\Sigma} L} \right) \tr(\Sigma \E[h''(X)]) \,,
\label{eq:prop:poincare-2}
\end{align}
where the final equality follows since
\begin{align*}
\E[h(X)]
\tag*{$h$ convex, $h(\zeros) = 0$}
&\leq \E[\ip{h'(X), X}] \\ 
&= \E[\ip{\Sigma h'(X), \Sigma^{-1} X}] \\
\tag*{integrating by parts}
&= \tr\left(\Sigma \E[h''(X)]\right)\,.
\end{align*}
The claim follows by combining \cref{eq:prop:poincare-1} and \cref{eq:prop:poincare-2}.
\end{proof}

\begin{corollary}\label{cor:poincare-ish}
Suppose that $\mu \in K$ and $X$ has law $\cN(\mu, \Sigma)$ with $\Sigma \succeq \zeros$. Then
\begin{enumerate}
\item $\E[v(X)] \leq \frac{M \sqrt{d \norm{\Sigma}}}{2 \epsilon}$. \label{cor:poincare-ish:E}
\item $\E[v(X)^2] \leq 1 + \frac{2 M  \sqrt{d \norm{\Sigma}}}{\epsilon} \tr\left(\Sigma \E[v''(X)]\right)$. \label{cor:poincare-ish:V}
\end{enumerate}
\end{corollary}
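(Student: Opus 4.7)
The plan is to prove part~(a) by reducing $\E[v(X)]$ to a Gaussian expectation of the support function $\pi = h_{K^\circ}$ and controlling it via the spherical mean width, and to prove part~(b) by rescaling $v$ to a $1$-Lipschitz function, applying Proposition~\ref{prop:poincare-ish}, and feeding in part~(a) to bound the $\E[h(X)]$ term that appears there.

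For part~(a), write $v(x) = \max(0, \pi_\epsilon(x) - 1)/\epsilon$ and use sublinearity of $\pi_\epsilon$ (which holds because $K_\epsilon \supset \ball(1-\epsilon)$ contains the origin) to get $\pi_\epsilon(X) \leq \pi_\epsilon(\mu) + \pi_\epsilon(X - \mu)$. Since $\mu \in K$ gives $\pi_\epsilon(\mu) \leq 1/(1-\epsilon)$, this reduces the task to bounding $\E[\pi(\Sigma^{1/2}W)]$ for $W \sim \cN(\zeros, \id)$. To convert this into a mean-width bound without assuming $K$ is symmetric, introduce an independent auxiliary $W_2 \sim \cN(\zeros, \norm{\Sigma}\id - \Sigma)$ and apply the midpoint-convexity inequality
\[
\pi(\Sigma^{1/2}W) \leq \tfrac{1}{2}\pi(\Sigma^{1/2}W + W_2) + \tfrac{1}{2}\pi(\Sigma^{1/2}W - W_2),
\]
valid because $\pi$ is sublinear. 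Taking expectations and using that $W_2$ and $-W_2$ are equal in distribution, both terms equal $\E[\pi(W')]$ for $W' \sim \cN(\zeros, \norm{\Sigma}\id)$. Positive homogeneity of $\pi$ combined with the identity $M(K^\circ) = \E_{\xi\sim\rho}[\pi(\xi)]$ and $\E[\norm{W}] \leq \sqrt{d}$ then give $\E[\pi(W')] \leq \sqrt{d\norm{\Sigma}}\,M$.

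For part~(b), set $\tilde h(x) = \epsilon v(x)/2$. By Lemma~\ref{lem:lipschitz} this function is $1$-Lipschitz, convex and non-negative, with $\tilde h(\mu) = 0$ whenever $\mu \in K_\epsilon$ (the residual case $\mu \in K \setminus K_\epsilon$ contributes only a lower-order term that is absorbed). Since $\norm{\Sigma} \leq 1$ in the regime of interest, Proposition~\ref{prop:poincare-ish} applies and gives
\[
\E[\tilde h(X)^2] \leq 1 + \bigl(\E[\tilde h(X)] + \sqrt{\norm{\Sigma} L}\bigr)\tr\bigl(\Sigma\,\E[\tilde h''(X)]\bigr).
\]
Rewriting in terms of $v$ (using $\tilde h'' = (\epsilon/2)v''$), substituting $\E[\tilde h(X)] = (\epsilon/2)\E[v(X)] \leq M\sqrt{d\norm{\Sigma}}/4$ from part~(a), and absorbing the lower-order $\sqrt{\norm{\Sigma} L}/\epsilon$ contribution into the dominant $M\sqrt{d\norm{\Sigma}}/\epsilon$ term under the algorithm's tuning yields the stated inequality.

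The main obstacle is the Gaussian symmetrisation step of part~(a): the usual Anderson-type comparison $\E[\sup_{u\in C}\sip{u, \Sigma^{1/2}W}] \leq \norm{\Sigma}^{1/2}\E[\sup_{u\in C}\sip{u, W}]$ is typically stated for symmetric $C$, whereas $K^\circ$ is not assumed symmetric here. Sublinearity of $\pi$ together with the distributional symmetry of the centred auxiliary Gaussian $W_2$ is precisely what unlocks the two-sided comparison in the non-symmetric case, so that the full strength of the mean width $M(K^\circ)$ rather than a crude Lipschitz bound is retained.
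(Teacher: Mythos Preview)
Your argument for part~(a) is correct and close in spirit to the paper's, though the paper takes a slightly shorter path: it introduces an independent $N \sim \cN(\zeros, \norm{\Sigma}\id - \Sigma)$ and uses Jensen's inequality on the convex function $v$ directly, $\E[v(X)] \leq \E[v(X+N)]$, to pass to an isotropic Gaussian $\mu + W$ with $W \sim \cN(\zeros,\norm{\Sigma}\id)$. Subadditivity of $\pi$ and the independence of $\norm{W}$ and $W/\norm{W}$ then finish the bound. Your symmetrisation via $\pm W_2$ achieves the same reduction; the ``obstacle'' you highlight is not really present, since the Jensen step needs no symmetry of $K^\circ$.

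Part~(b), however, has a genuine gap. After rescaling to $\tilde h = (\epsilon/2)v$ and applying Proposition~\ref{prop:poincare-ish} as stated, unscaling yields
\[
\E[v(X)^2] \;\leq\; \frac{4}{\epsilon^2} \;+\; \Bigl(\E[v(X)] + \tfrac{2}{\epsilon}\sqrt{\norm{\Sigma} L}\Bigr)\,\tr\bigl(\Sigma\,\E[v''(X)]\bigr).
\]
You correctly absorb the $\sqrt{\norm{\Sigma} L}/\epsilon$ term, but you never address the additive $4/\epsilon^2$. Since $\epsilon = \Theta(\poly(d,L)/\sqrt{n})$, this term is of order $n$; when the corollary is summed over $\tau \leq n$ rounds in the proof of Lemma~\ref{lem:f-squared} it contributes $\Theta(n^2)$ rather than $n$, destroying the bound $\bar M_\tau \leq 100\, n L^2$. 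The paper instead applies the \emph{proof} of Proposition~\ref{prop:poincare-ish} directly to $v$ without rescaling: the constant ``$1$'' there comes from the contribution of a rare event with probability $\lesssim \exp(-L)$, multiplied by a polynomial in $\lip(h)$ and $\E[\norm{X}^4]$. Because $L$ is chosen large enough to dominate any polynomial in $n,d,1/\delta$, this contribution stays $\leq 1$ even when $\lip(v) = 2/\epsilon$ is polynomial in $n$; only the deviation term $\sqrt{\norm{\Sigma}L}$ picks up the Lipschitz constant, becoming $\sqrt{L}/\epsilon$. Your rescaling discards exactly this scale-awareness of the underlying argument.
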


\begin{proof}
Let $W$ have law $\cN(\zeros, \norm{\Sigma} \id)$ and $N$ have law $\cN(\zeros, \norm{\Sigma} \id - \Sigma)$, which is a 
legitimate Gaussian since $\norm{\Sigma} \id - \Sigma \succeq \zeros$.
Let $X$, $N$ and $W$ be independent, which means that $X + N$ has the same law as $W$. By convexity,
\begin{align*}
\E[v(X)] 
&\leq \E[v(X + N)] 
= \E[v(\mu + W)] 
= \frac{1}{\epsilon} \E[\pip(\mu+ W) - 1] \\
&= \frac{1}{\epsilon}\left[\max(\pi(\mu + W), 1) - 1\right] 
\leq \frac{1}{\epsilon}\left[\max(\pi(\mu) + \pi(W), 1) - 1\right] 
\leq \frac{1}{\epsilon} \E[\pi(W)]\,,
\end{align*}
where in the second last inequality we used the fact that the Minkowski functional $\pi$ is subadditive and in the last that $\mu \in K$ so that $\pi(\mu) \leq 1$.
Note that $W / \norm{W}$ and $\norm{W}$ are independent and the former us uniformly distributed on $\sphere(1)$ so that
\begin{align*}
\E[\pi(W)] = \E[\pi(W/\norm{W}) \norm{W}] = \E[\pi(W/\norm{W})] \E[\norm{W}]
\leq \frac{M}{2} \E[\norm{W}] \leq \frac{M \sqrt{d \norm{\Sigma}}}{2}\,, 
\end{align*}
where the first inequality follows from the definition of $M$ and the second by Cauchy-Schwarz and because $\E[\norm{W}^2] = d \norm{\Sigma}$.
This completes the proof of \ref{cor:poincare-ish:E}.
For \ref{cor:poincare-ish:V}, by Proposition~\ref{prop:poincare-ish},
\begin{align*}
\E[v(X)^2] 
&\leq 1 + \left(\E[v(X)] + \frac{\sqrt{L}}{\epsilon}\right) \tr(\Sigma \E[v''(X)]) \\
&\leq 1 + \frac{1}{\epsilon} \left(\frac{M \sqrt{d \norm{\Sigma}}}{2} + \sqrt{L}\right) \tr(\Sigma \E[v''(X)])\,.
\end{align*}
The result follows by naive simplification.
\end{proof}

\begin{lemma}\label{lem:sigma}
Suppose that $s \leq t \leq \tau$. The following hold:
\begin{enumerate}
\item $\bar \Sigma_t \preceq 2 \bar \Sigma_s$. \label{lem:sigma:S1}
\item $\Sigma_t \preceq 4 \sigma^2 \id \preceq \id$. \label{lem:sigma:S2}
\item $\bar \Sigma_t^{-1} \preceq \frac{1}{\delta} \id$.  \label{lem:sigma:sinv}
\item $\bar H_t^{-1} \preceq \frac{1}{\delta} \id$. \label{lem:sigma:H}
\end{enumerate}
\end{lemma}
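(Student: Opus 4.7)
\textbf{Proof proposal for Lemma~\ref{lem:sigma}.} All four claims will spring from the explicit recursion
\begin{align*}
\bar\Sigma_t^{-1} = w_{t-1}\Big[\frac{\id}{\sigma^2} + \eta\sum_{u=1}^{t-1}\frac{\bar H_u}{w_u}\Big],
\end{align*}
together with two standing facts: $\bar H_u = s''_u(\mu_u) \succeq \zeros$ (convexity of $s_u$, as promised in Section~\ref{sec:surrogateloss}), and the weights satisfy $w_u \in [1/2, 1]$ (as noted immediately after Lemma~\ref{lem:bonus}, since $m_\tau \leq dL$ and $\gamma = 1/(4dL)$ together imply $(1-2\gamma)^{m_\tau} \geq 1/2$).

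For part~\ref{lem:sigma:S1}, for $s \leq t$ I would expand both $\bar\Sigma_s^{-1}$ and $\bar\Sigma_t^{-1}$ using the recursion and simply discard the positive semidefinite tail summands of the latter. This gives $\bar\Sigma_t^{-1} \succeq \tfrac{w_{t-1}}{w_{s-1}}\bar\Sigma_s^{-1} \succeq \tfrac{1}{2}\bar\Sigma_s^{-1}$, where the final inequality uses $w_{t-1} \geq 1/2$ and $w_{s-1} \leq 1$. Inverting yields the claim. For part~\ref{lem:sigma:S2}, Definition~\ref{def:tau}\ref{def:tau:S} together with the text remark following it gives $\Sigma_t^{-1} \succeq \tfrac{1}{2}\bar\Sigma_t^{-1}$ for every $t \leq \tau$, hence $\Sigma_t \preceq 2\bar\Sigma_t$. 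Applying part~\ref{lem:sigma:S1} at $s=1$ with $\bar\Sigma_1 = \sigma^2\id$ then yields $\Sigma_t \preceq 4\sigma^2\id$. The remaining inequality $4\sigma^2 \leq 1$ is a one-line check: in the adversarial tuning $\sigma^2 = 1/d^2$, and in the stochastic tuning $\sigma^2 = 1/(16M^2dL^3)$ with $M \geq d^{-1/2}$ gives $\sigma^2 \leq 1/(16L^3)$.

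For parts~\ref{lem:sigma:sinv} and~\ref{lem:sigma:H} I would proceed by plain operator-norm estimates. Their purpose, as seen in the proofs of Lemmas~\ref{lem:cov} and~\ref{lem:logdet}, is only to feed into log-determinant bounds, so any polynomial-in-$n,d$ bound suffices under the standing assumption on $\delta$. For part~\ref{lem:sigma:H} I would directly invoke the operator-norm bound on $\bar H_t = s_t''(\mu_t)$ given by Lemma~\ref{lem:hess} (which controls the surrogate Hessian in terms of $\lip(f_t)$ and $\|\Sigma_t\|$, both already bounded via Lemma~\ref{lem:lip} and part~\ref{lem:sigma:S2}) and check that the constants are comfortably dominated by $1/\delta$. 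For part~\ref{lem:sigma:sinv} I would use $w_{t-1} \leq 1$ and $1/w_u \leq 2$ to write $\bar\Sigma_t^{-1} \preceq \id/\sigma^2 + 2\eta\sum_{u=1}^{t-1}\bar H_u$, then plug in the bound on $\|\bar H_u\|$ from part~\ref{lem:sigma:H} and the algorithm constants to obtain a polynomial-in-$n,d$ estimate, again dominated by $1/\delta$.

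The main obstacle is none of the steps is deep: parts~\ref{lem:sigma:S1} and~\ref{lem:sigma:S2} are pure algebra powered by $\bar H_u \succeq \zeros$ and $w_u \in [1/2,1]$, while parts~\ref{lem:sigma:sinv} and~\ref{lem:sigma:H} reduce to a surrogate-Hessian norm bound from Lemma~\ref{lem:hess} plus constant bookkeeping. The one delicate piece is making sure the stopping-time comparison used in part~\ref{lem:sigma:S2} holds uniformly for all $t \leq \tau$ and not merely at $t = \tau+1$; this is exactly the content of the remark following Definition~\ref{def:tau}.
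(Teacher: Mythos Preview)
Your treatment of parts~\ref{lem:sigma:S1} and~\ref{lem:sigma:S2} is exactly the paper's argument. The problem is in parts~\ref{lem:sigma:sinv} and~\ref{lem:sigma:H}, where your proposed order of attack is circular.

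You say you will bound $\|\bar H_t\|$ via Lemma~\ref{lem:hess} using the bound on $\|\Sigma_t\|$ from part~\ref{lem:sigma:S2}. But Lemma~\ref{lem:hess}\textit{(a)} reads $\|s''(z)\| \leq \frac{\lambda\lip(f)}{1-\lambda}\sqrt{d\|\Sigma^{-1}\|}$: it needs an \emph{upper} bound on $\|\Sigma_t^{-1}\|$, not on $\|\Sigma_t\|$. Part~\ref{lem:sigma:S2} only gives $\Sigma_t \preceq 4\sigma^2\id$, i.e.\ a \emph{lower} bound on $\Sigma_t^{-1}$, which is useless here. (Part~\textit{(b)} of Lemma~\ref{lem:hess} does involve $\|\Sigma\|$, but it bounds $\|\Sigma^{1/2}s''\Sigma^{1/2}\|$, not $\|s''\|$.) You then propose to bound $\|\bar\Sigma_t^{-1}\|$ by plugging in the bound on $\|\bar H_u\|$ from part~\ref{lem:sigma:H}, so the two parts lean on each other and neither stands.

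The paper breaks the circularity by induction on $t$: assuming $\Sigma_u^{-1}\preceq \delta^{-1}\id$ for all $u<t$, Lemma~\ref{lem:hess}\textit{(a)} bounds each $\|\bar H_u\|$ for $u<t$, which feeds into $\bar\Sigma_t^{-1} = w_{t-1}[\sigma^{-2}\id + \eta\sum_{u<t}\bar H_u/w_u]$ to give $\|\bar\Sigma_t^{-1}\|\leq 1/\delta$; the $\tau$-comparison $\Sigma_t^{-1}\preceq \tfrac{3}{2}\bar\Sigma_t^{-1}$ then closes the induction. Only after part~\ref{lem:sigma:sinv} is established does the paper deduce part~\ref{lem:sigma:H}, in the opposite order from your proposal.
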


\begin{proof}
By definition,
\begin{align*}
\bar \Sigma_t^{-1} 
= w_{t-1}\left[\frac{\id}{\sigma^2} + \eta \sum_{u=1}^{t-1} \frac{\bar H_u}{w_u}\right]
\succeq w_{t-1}\left[\frac{\id}{\sigma^2} + \eta \sum_{u=1}^{s-1} \frac{\bar H_u}{w_u}\right]
= \frac{w_{t-1}}{w_{s-1}} \bar \Sigma_s^{-1}
\succeq \frac{1}{2} \bar \Sigma_s^{-1}\,.
\end{align*}
This establishes \ref{lem:sigma:S1}. Part~\ref{lem:sigma:S2} follows form \ref{lem:sigma:S1} and the definition of $\tau$, which yields
$\Sigma_t \preceq 2 \bar \Sigma_t$. The claim follows since $\bar \Sigma_1 = \sigma^2 \id$.
For part~\ref{lem:sigma:sinv} we proceed by induction. 
Suppose that for all $u < t$ that $\Sigma_u^{-1} \preceq \frac{1}{\delta} \id$, which is plainly true for $u = 1$. Then
\begin{align*}
\norm{\bar \Sigma_t^{-1}}
&= \norm{w_{t-1}\left[\frac{1}{\sigma^2} \id + \sum_{u=1}^{t-1} \frac{\bar H_u}{w_u}\right]} \\
&\leq \frac{1}{\sigma^2} + 2 \sum_{u=1}^{t-1} \norm{s''_u(\mu_u)} \\
\tag*{Lemma~\ref{lem:hess}}
&\leq \frac{1}{\sigma^2} + \frac{2 \lambda \lip(f_t)}{1 - \lambda} \sum_{u=1}^{t-1} \sqrt{d \norm{\Sigma_u^{-1}}} \\
&\leq \frac{1}{\sigma^2} + \frac{2 n \lambda \lip(f_t) \sqrt{d/\delta}}{1 - \lambda} \\
&\leq \frac{1}{\delta}\,,
\end{align*}
where in the first inequality we used the fact that the weights $(w_t) \in [1/2, 1]$ and the definition of $\bar H_u = s''_u(\mu_u)$ and
the triangle inequality. 
Part~\ref{lem:sigma:H} follows naively from Lemma~\ref{lem:hess} and part~\ref{lem:sigma:sinv}.
\end{proof}

\begin{lemma}\label{lem:lip}
The Lipschitz constant of the extension is bounded by
\begin{align*}
\lip(f_t) \leq \frac{15d}{\epsilon}\,.
\end{align*}
\end{lemma}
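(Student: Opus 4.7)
Because a convex function on $\R^d$ has Lipschitz constant equal to the essential supremum of the operator norm of its subdifferential, the plan is to bound $\|\partial f_t(x)\|$ uniformly in $x$. I would split
\begin{align*}
f_t(x) = \pip(x)\, \ell_t\bigl(x/\pip(x)\bigr) + \tfrac{2}{\epsilon}\bigl(\pip(x) - 1\bigr)
\end{align*}
and control each summand separately. The additive correction is immediately $(4/\epsilon)$-Lipschitz by Lemma~\ref{lem:lipschitz}, so all the work goes into the perspective term.

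The essential preliminary is to show that $\ell_t$ is $(1/\epsilon)$-Lipschitz on $K_\epsilon$. I would extract this from \cref{eq:rounded}: since $\ball(1) \subset K$, any $y = (1-\epsilon) y' \in K_\epsilon$ satisfies $y + \epsilon u = (1-\epsilon) y' + \epsilon u \in K$ for every unit vector $u$ by convexity of $K$, so $\ball(y, \epsilon) \subset K$. Combined with the standard convex-extrapolation argument (given $x, y \in K_\epsilon$, extend the chord through $y$ by $\epsilon$ in the direction $y - x$ to land at some $z \in K$, then apply convexity) and $\ell_t \in [0, 1]$, this yields $\ell_t(y) - \ell_t(x) \leq \|x - y\|/\epsilon$.

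Next I would differentiate the perspective term $u(x) = \pip(x) \ell_t(b(x))$ with $b(x) = x/\pip(x)$. The chain rule --- justified by Rockafellar's subdifferential calculus applied to the jointly convex perspective $g(x, \lambda) = \lambda \ell_t(x/\lambda)$ already exploited in the proof of \cref{lem:extend} --- yields the subgradient
\begin{align*}
\ell_t'(b(x)) + \bigl[\ell_t(b(x)) - \langle \ell_t'(b(x)), b(x)\rangle\bigr]\, \nabla \pip(x).
\end{align*}
Every factor is now controlled: $b(x) \in K_\epsilon$ always, so the previous step gives $\|\ell_t'(b(x))\| \leq 1/\epsilon$ and $\ell_t(b(x)) \in [0, 1]$; the inclusion $K \subset 2\ball(d+1)$ yields $\|b(x)\| \leq 2(d+1)$; and $\|\nabla \pip(x)\| \leq 2$ by Lemma~\ref{lem:lipschitz}. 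Assembling these with the $4/\epsilon$ from the additive term gives a bound of the shape $(4d + 9)/\epsilon + 2$, which collapses into the claimed $15d/\epsilon$ using $\epsilon \in (0, 1/2)$ and $d \geq 1$.

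The main wrinkle is presentational rather than conceptual: $\pip$ fails to be smooth on $\partial K_\epsilon$ (its subdifferential is $\{\zeros\}$ on the interior of $K_\epsilon$ but equals the subdifferential of $\pi_\epsilon$ outside), and $\ell_t$ need not be smooth either, so the chain rule must be interpreted through subdifferentials and one should verify that the subdifferential of $u$ is indeed generated by selections of $\ell_t'$ and $\nabla \pip$ in the form written above. Apart from this, the remaining effort is bookkeeping of the numerical constant $15d$, which leaves some slack.
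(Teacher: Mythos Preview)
Your proposal is correct and follows essentially the same route as the paper: both arguments differentiate the perspective formula via the chain rule, invoke the $1/\epsilon$ Lipschitz bound for $\ell_t$ on $K_\epsilon$ obtained from $\ball(1)\subset K$, use $\|\nabla\pip\|\le 2$ from Lemma~\ref{lem:lipschitz}, and use $\|x/\pip(x)\|\le 2(d+1)$ from \cref{eq:rounded}. The only cosmetic difference is that you split off the additive $\tfrac{2}{\epsilon}(\pip-1)$ term first, whereas the paper differentiates $f_t$ in one shot and restricts attention to $x\notin K_\epsilon$ (inside $K_\epsilon$ the function equals $\ell_t$ and the $1/\epsilon$ bound applies directly); your chain-rule expression $\ell_t'(b)+[\ell_t(b)-\langle\ell_t'(b),b\rangle]\nabla\pip$ is exactly the gradient the paper computes in directional form.
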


\begin{proof}
By definition,
\begin{align*}
f_t(x) = \pip(x) \ell_t\left(\frac{x}{\pip(x)}\right) + \frac{2(\pip(x) - 1)}{\epsilon}\,.
\end{align*}
Note that $\lip(\pip(x)) \leq 2$ and $\ell_t(x) \in [0,1]$ for all $x \in K$ by assumption.
Furthermore, since $\ball(1) \subset K$, $x + \ball(\epsilon) \subset K$ for all $x \in K_\epsilon$ and
by \citep[Proposition 3.6]{L24}, $\lip_{K_\epsilon}(\ell_t) \leq \frac{1}{\epsilon}$.
Since $f_t$ is defined everywhere, it suffices to bound the magnitude of its gradients outside $K$.
Let $x \notin K$ where $\pip(x) = \pi_\epsilon(x)$.
Since $\pi_\epsilon$ is the support function of $K_\epsilon$ its subgradients are in $K_\epsilon^\circ$.
Let $h \in \sphere(1)$ be arbitrary and $\theta \in K_\epsilon^\circ$ be such that $D \pi_\epsilon(x)[h] = \ip{\theta, h}$.
Then,
\begin{align*}
Df_t(x)[h] 
&= \ip{\theta, h}\left[\frac{2}{\epsilon} + \ell_t\left(\frac{x}{\pip(x)}\right)\right] + D \ell_t\left(\frac{x}{\pip(x)}\right)\left[h - \frac{x \ip{\theta, h}}{\pip(x)}\right] \\
&\leq \norm{\theta}\left[\frac{2}{\epsilon} + 1\right] + \frac{1}{\epsilon} \norm{h - \frac{x \ip{\theta, h}}{\pip(x)}} \\
&\leq \norm{\theta}\left[\frac{2}{\epsilon} + 1\right] + \frac{1}{\epsilon} + \frac{\norm{x} \norm{\theta}}{\epsilon \pip(x)} \\
&\leq 2 \left[\frac{2}{\epsilon} + 1\right] + \frac{1}{\epsilon} + \frac{4(d+1)}{\epsilon} \\
&\leq \frac{15d}{\epsilon}\,,
\end{align*}
where we used the assumption that $K \subset \ball(2(d+1))$ and $\epsilon \in (0,1/2)$ so that $\theta \in K_\epsilon^\circ \subset \ball(1/2)$.
\end{proof}

\begin{lemma}\label{lem:Ybounds}
Suppose that $t \leq \tau$.
The following hold with $\bbP_{t-1}$-probability at least $1 - \delta$:
\begin{enumerate}
\item $|Y_t| \leq 4 L + 2 v(X_t)$.
\item $|Y_t| \leq \frac{8L}{\epsilon}$.
\item $\E_{t-1}[Y_t^2] \leq 24 \E_{t-1}[v(X_t)^2] + 12$.
\end{enumerate}
\end{lemma}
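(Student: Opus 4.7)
}
All three claims follow from a direct calculation using only (i) the subgaussian tail of $\epsilon_t$ via Lemma~\ref{lem:subgaussian}, (ii) the boundedness $\ell_t(A_t) \in [0,1]$ (since $A_t = X_t/\pip(X_t) \in K$), (iii) the identity $\pip(X_t) = 1 + \epsilon\,v(X_t)$ built into the definition of $v$, and (iv) concentration of $\norm{X_t - \mu_t}$ controlled via Lemmas~\ref{lem:gaussian-norm} and~\ref{lem:sigma}. Throughout I will absorb numerical constants by using that $L$ is a sufficiently large logarithmic factor, and that $\epsilon \leq 1/2$.

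For part~\emph{(a)}, apply Lemma~\ref{lem:subgaussian} to the conditionally subgaussian noise $\epsilon_t$: with $\bbP_{t-1}$-probability at least $1 - \delta$ we have $|\epsilon_t| \leq \sqrt{L}$. On this event the triangle inequality gives
\begin{align*}
|Y_t| \leq \pip(X_t)\bigl(|\ell_t(A_t)| + |\epsilon_t|\bigr) + 2v(X_t) \leq \pip(X_t)(1 + \sqrt{L}) + 2v(X_t)\,.
\end{align*}
Substituting $\pip(X_t) = 1 + \epsilon\,v(X_t) \leq 1 + v(X_t)$ yields a bound of the form $c_1 \sqrt{L} + c_2 \sqrt{L}\,v(X_t)$, which I absorb into $4L + 2v(X_t)$ by choosing the universal constant in $L$ appropriately.

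For part~\emph{(b)}, I need a deterministic bound on $\pip(X_t)$ (and hence on $v(X_t)$). By Lemma~\ref{lem:gaussian-norm} applied to $X_t \sim \cN(\mu_t, \Sigma_t)$ together with the uniform bound $\norm{\Sigma_t} \leq 4\sigma^2$ from Lemma~\ref{lem:sigma}\ref{lem:sigma:S2}, we have $\norm{X_t - \mu_t} \leq 2\sigma\sqrt{dL}$ with conditional probability at least $1-\delta$. Since $\mu_t \in K_\epsilon$, $\pi_\epsilon(\mu_t) \leq 1$, and $\pi_\epsilon$ is $2$-Lipschitz by Lemma~\ref{lem:lipschitz}, this yields $\pip(X_t) \leq 1 + 4\sigma\sqrt{dL}$, and hence $v(X_t) \leq 4\sigma\sqrt{dL}/\epsilon$. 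Plugging these into the bound from \emph{(a)} and using the tuning constants (which make $\sigma\sqrt{dL}$ small enough that the final bound collapses to $O(L/\epsilon)$) gives the stated $8L/\epsilon$ up to constants absorbed into $L$.

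For part~\emph{(c)}, I condition on $(\sF_{t-1}, X_t)$ and exploit that $\epsilon_t$ is centered and subgaussian at that level of conditioning. Jensen applied to $\E[\exp(\epsilon_t^2)\mid\ldots] \leq 2$ gives $\E[\epsilon_t^2\mid\ldots] \leq \log 2 \leq 1$. Writing $Y_t = \bigl(\pip(X_t)\ell_t(A_t) + 2v(X_t)\bigr) + \pip(X_t)\epsilon_t$ and using that the cross term vanishes in expectation,
\begin{align*}
\E[Y_t^2 \mid \sF_{t-1}, X_t] \leq \bigl(\pip(X_t) + 2v(X_t)\bigr)^2 + \pip(X_t)^2\,.
\end{align*}
Expanding with $(a+b)^2 \leq 2a^2 + 2b^2$ and $\pip(X_t)^2 = (1 + \epsilon v(X_t))^2 \leq 2 + 2\epsilon^2 v(X_t)^2 \leq 2 + v(X_t)^2/2$ (using $\epsilon \leq 1/2$) bounds this by a constant plus a constant multiple of $v(X_t)^2$. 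Taking outer expectation and tightening the constants delivers the claimed $24\E_{t-1}[v(X_t)^2] + 12$.

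The only mildly delicate step is part~\emph{(b)}, where one must verify that the tuning constants chosen in Section~\ref{sec:algorithm} indeed make $\sigma\sqrt{dL}/\epsilon$ small enough to yield a bound of order $L/\epsilon$ rather than a larger dimension-dependent quantity; everything else is routine manipulation of the definitions.
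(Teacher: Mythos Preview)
There is a gap in your argument for part~\emph{(a)}. You correctly note the identity $\pip(X_t)=1+\epsilon\,v(X_t)$, but then discard the $\epsilon$ via $\pip(X_t)\leq 1+v(X_t)$. Substituting produces a bound of the form $c_1\sqrt L + c_2\sqrt L\,v(X_t)$, and you claim this can be ``absorbed into $4L+2v(X_t)$ by choosing the universal constant in $L$ appropriately''. This is backwards: enlarging $L$ makes the coefficient $c_2\sqrt L$ on $v(X_t)$ \emph{larger}, so it can never be pushed below~$2$. What you have actually shown is $O(\sqrt L)+O(\sqrt L)\,v(X_t)$, which is not the stated inequality.

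The paper avoids this by bounding $\pip(X_t)$ directly rather than routing through $v$. Gaussian Lipschitz concentration (Lemma~\ref{lem:lip-conc}) gives $\pip(X_t)\leq\E_{t-1}[\pip(X_t)]+O(\sqrt{\norm{\Sigma_t}L})$, and the mean-width estimate of Corollary~\ref{cor:poincare-ish}\ref{cor:poincare-ish:E} together with the tuning of $\sigma$ yields $\E_{t-1}[\pip(X_t)]\leq 1+\sigma M\sqrt d\leq 2$; hence $\pip(X_t)\leq 2\sqrt L$. Then $\pip(X_t)\ell_t(A_t)\leq 2\sqrt L$ and $\pip(X_t)|\epsilon_t|\leq 2L$, while the additive $2v(X_t)$ appearing in the definition of $Y_t$ is never manipulated, so its coefficient stays exactly~$2$. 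If you prefer your own route, simply do not discard the $\epsilon$: the $v$-coefficient becomes $2+\epsilon(1+\sqrt L)$, which is at most~$3$ since $\epsilon=\tilde O(n^{-1/2})$. That would prove the slightly weaker bound $4L+3v(X_t)$, enough for every downstream use though not literally the lemma as stated.

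Your parts~\emph{(b)} and~\emph{(c)} are fine and close to the paper. For~\emph{(b)} the paper reuses $\pip(X_t)\leq 2\sqrt L$ from~\emph{(a)} to get $v(X_t)\leq 2\sqrt L/\epsilon$ directly, instead of bounding $\norm{X_t-\mu_t}$ as you do, but both lead to the claimed $8L/\epsilon$. For~\emph{(c)} the paper uses $(a+b+c)^2\leq 3(a^2+b^2+c^2)$ on the three summands of $Y_t$, whereas you first separate off the noise and then square; the arithmetic is equivalent.
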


\begin{proof}
Since $t \leq \tau$, by Lemma~\ref{lem:sigma}, $\Sigma_t \preceq \id$.
A union bound combined with the fact that $\pip$ is $2$-Lipschitz (Lemma~\ref{lem:lipschitz}) and the assumption that the noise is subgaussian and Lemma~\ref{lem:lip-conc} shows that
with probability at least $1 - \delta$ 
\begin{align}
\pip(X_t) \leq \E_{t-1}[\pip(X_t)] + \sqrt{\norm{\Sigma_t} L} \leq \E_{t-1}[\pip(X_t)] + \sqrt{L} \qquad \text{and} \qquad 
|\epsilon_t| \leq \sqrt{L}\,.
\label{eq:lem:Ybounds}
\end{align}
For the remainder of the proof we assume the above holds.
By Corollary~\ref{cor:poincare-ish}\ref{cor:poincare-ish:E},
\begin{align*}
\E_{t-1}[\pip(X_t)] \leq 1 + \frac{M \sqrt{d \norm{\Sigma_t}}}{2} \leq 1 + \sigma M \sqrt{d} \leq 2\,,
\end{align*}
which when combined with \cref{eq:lem:Ybounds} implies that $\pip(X_t) \leq 2 \sqrt{L}$.
By definition,
\begin{align*}
Y_t 
&= \pip(X_t) \ell_t(X_t / \pip(X_t)) + \pip(X_t) \epsilon_t + 2 v(X_t) \\
&\leq 2 \sqrt{L} + 2 L + 2 v(X_t) \\
&\leq 4 L + 2 v(X_t) \\
&= 4 L + \frac{2 \pip(X_t) - 2}{\epsilon} \\
&\leq 4 L + \frac{4 \sqrt{L} - 2}{\epsilon} \\
&\leq \frac{8L}{\epsilon}\,.
\end{align*}
This establishes both \textit{(a)} and \textit{(b)}. 
For the last part, using the fact that $(a + b + c)^2 \leq 3 a^2 + 3 b^2 + 3 c^2$,
\begin{align*}
\E_{t-1}[Y_t^2]
&= \E_{t-1}\left[\left(\pip(X_t) \ell_t(X_t / \pip(X_t)) + \pip(X_t) \epsilon_t + 2 v(X_t)\right)^2\right] \\
&\leq 12 \E_{t-1}[v(X_t)^2] + 3\E_{t-1}[\pip(X_t)^2] + 3 \E_{t-1}[\pip(X_t)^2 \epsilon_t^2] \\
&\leq 12 \E_{t-1}[v(X_t)^2] + 6 \E_{t-1}[\pip(X_t)^2] \\
&\leq 12 \E_{t-1}[v(X_t)^2] + 12 \E_{t-1}[(\pip(X_t)-1)^2] + 12 \\
&\leq 24 \E_{t-1}[v(X_t)^2] + 12\,,
\end{align*}
where in the second inequality we used the fact that $\E_{t-1}[\epsilon_t^2|X_t] \leq 1$. The remaining steps follow from naive simplification.
\end{proof}

\section{Properties of the surrogate}\label{app:surrogate}
In this section we collect the essential properties of the surrogate loss and its quadratic approximation.
In one form or another, most of these results appeared in the work by \cite{LG21a,LG23} or (less obviously) \cite{BEL16}.
You can find an extensive explanation in the recent monograph by \cite{L24}.

\paragraph{Basic properties}
We start with the elementary properties of the surrogate. To simplify notation, let $f : \R^d \to \R$ be convex, let $X$ have law $\cN(\mu, \Sigma)$ with 
$\Sigma^{-1} \preceq \frac{1}{\delta} \id$ and set
\begin{align*}
s(z) &= \E\left[\left(1 - \frac{1}{\lambda}\right) f(X) + \frac{1}{\lambda} f((1 - \lambda) X + \lambda z)\right] \\ 
q(z) &= \ip{s'(\mu), z - \mu} + \frac{1}{4} \norm{z - \mu}^2_{s''(\mu)} \,. 
\end{align*}

\begin{lemma}[Lemma 11.2, \citealt{L24}]\label{lem:s:basic}
The following hold:
\begin{enumerate}
\item $s$ is optimistic: $s(z) \leq f(z)$ for all $z \in \R^d$; and \label{lem:s:basic:opt}
\item $s$ is convex and infinitely differentiable. \label{lem:s:basic:cvx}
\end{enumerate}
\end{lemma}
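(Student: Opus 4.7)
Both claims reduce to convexity of $f$ together with elementary manipulation of the definition of $s$. The plan is as follows.

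For part~\ref{lem:s:basic:opt}, I would apply convexity of $f$ to the convex combination $(1-\lambda) X + \lambda z$ to obtain $f((1-\lambda) X + \lambda z) \leq (1-\lambda) f(X) + \lambda f(z)$ pointwise in $X$. Dividing by $\lambda$ and substituting into the definition of $s$, the two $f(X)$ contributions cancel exactly, leaving $s(z) \leq f(z)$ after taking expectations.

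For the convexity half of part~\ref{lem:s:basic:cvx}, the key observation is that for any $z_1, z_2 \in \R^d$ and $\alpha \in [0,1]$,
\begin{align*}
(1-\lambda) X + \lambda (\alpha z_1 + (1-\alpha) z_2) = \alpha\left[(1-\lambda) X + \lambda z_1\right] + (1-\alpha)\left[(1-\lambda) X + \lambda z_2\right]\,.
\end{align*}
Convexity of $f$ applied pointwise in $X$, followed by linearity of expectation, then gives $s(\alpha z_1 + (1-\alpha) z_2) \leq \alpha s(z_1) + (1-\alpha) s(z_2)$; the constant-in-$z$ term $(1 - 1/\lambda)\E[f(X)]$ is harmless.

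For smoothness, the plan is to change variables so that the $z$-dependence is moved off $f$ and onto the Gaussian density, which is the standard trick for this kind of smoothed surrogate. Writing $p$ for the density of $\cN(\mu, \Sigma)$ and substituting $y = (1-\lambda) x + \lambda z$ in the integral yields
\begin{align*}
\E[f((1-\lambda) X + \lambda z)] = \int f(y)\, \frac{1}{(1-\lambda)^d}\, p\!\left(\frac{y - \lambda z}{1 - \lambda}\right) \d{y}\,,
\end{align*}
so the integrand is the product of $f(y)$ and a Gaussian density in $y$ whose dependence on $z$ is $C^\infty$ with $z$-derivatives that are polynomial multiples of the same Gaussian. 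Since $f$ is convex on $\R^d$ it is locally bounded and grows at most linearly along any ray from the origin, so every $z$-derivative of the integrand is dominated, uniformly over $z$ in a compact neighbourhood, by an integrable envelope of the form (polynomial in $y$) $\times$ (slightly inflated Gaussian in $y$). Differentiation under the integral sign via dominated convergence is therefore justified and yields smoothness of $s$ to all orders. The only step requiring real care is this domination argument; everything else is an immediate unwinding of the definitions, so I do not expect a serious obstacle.
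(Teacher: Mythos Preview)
The paper does not supply its own proof of this lemma; it is quoted verbatim from \cite{L24} and used as a black box. Your argument is the standard one and matches what that reference does, so in spirit there is nothing to compare.

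There is, however, one genuine slip in your smoothness argument. You write that ``since $f$ is convex on $\R^d$ it is locally bounded and grows at most linearly along any ray from the origin.'' The local boundedness is fine, but the linear-growth claim is false for convex functions in general: $f(x) = \norm{x}^2$ and $f(x) = e^{\norm{x}}$ are convex and grow super-linearly. Without some control on the growth of $f$, the integral defining $s$ need not even be finite, let alone differentiable under the integral sign. In the setting of this paper the issue disappears because the extensions $f_t$ are globally Lipschitz (Lemma~\ref{lem:lip} gives $\lip(f_t) \leq 15d/\epsilon$), so $|f(y)|$ is dominated by an affine function of $\norm{y}$ and your envelope argument goes through exactly as written. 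Just replace the incorrect appeal to convexity with the Lipschitz bound and the proof is complete.
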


\begin{lemma}[Proposition 11.4, \citealt{L24}]\label{lem:hess} 
For any $z \in \R^d$:
\begin{enumerate}
\item $\norm{s''(z)} \leq \frac{\lambda \lip(f)}{1-\lambda} \sqrt{d \norm{\Sigma^{-1}}}$.
\item $\norm{\Sigma^{1/2} s''(z) \Sigma^{1/2}} \leq \frac{\lambda \lip(f)}{1-\lambda} \sqrt{d \norm{\Sigma}}$.
\end{enumerate}
\end{lemma}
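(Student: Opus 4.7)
The plan is to write $s$ explicitly as a Gaussian convolution of $f$ whose mean depends on $z$, apply Gaussian integration by parts to get a tractable expression for $s''(z)$, and then bound the resulting bilinear expectation with Cauchy--Schwarz. First, I would note that the $(1 - 1/\lambda) f(X)$ term in the definition of $s$ is independent of $z$, so up to an additive constant
\begin{align*}
s(z) = \frac{1}{\lambda}\, \E[f(Y)], \qquad Y := (1-\lambda) X + \lambda z \sim \cN(\mu_Y, \Sigma_Y),
\end{align*}
with $\mu_Y = (1-\lambda)\mu + \lambda z$ and $\Sigma_Y = (1-\lambda)^2 \Sigma$. Differentiating twice under the integral and doing one Gaussian integration by parts on the $z$-derivative of the density then yields
\begin{align*}
s''(z) = \lambda\, \E[f''(Y)] = \lambda\, \Sigma_Y^{-1} \E\big[(Y-\mu_Y) f'(Y)^\top\big] = \frac{\lambda}{1-\lambda}\, \Sigma^{-1} \E\big[(X-\mu) f'(Y)^\top\big],
\end{align*}
where the last equality uses $Y - \mu_Y = (1-\lambda)(X-\mu)$ and $\Sigma_Y^{-1} = (1-\lambda)^{-2}\Sigma^{-1}$.

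For part (a), I would test $s''(z)$ against an arbitrary unit vector $u \in \sphere(1)$ and apply the Cauchy--Schwarz estimate $\norm{\E[VA]}^2 \leq \E[\norm{V}^2]\, \E[A^2]$ with $V = \Sigma^{-1}(X-\mu)$ and $A = \ip{f'(Y), u}$. Then $\E[\norm{V}^2] = \tr(\Sigma^{-1}) \leq d \norm{\Sigma^{-1}}$, and $\E[A^2] \leq \lip(f)^2$ because $\norm{f'(Y)} \leq \lip(f)$ almost surely; combining the two factors gives the claimed bound. For part (b), I would first conjugate by $\Sigma^{1/2}$ to get
\begin{align*}
\Sigma^{1/2} s''(z) \Sigma^{1/2} = \frac{\lambda}{1-\lambda}\, \Sigma^{-1/2} \E\big[(X-\mu) f'(Y)^\top\big]\, \Sigma^{1/2},
\end{align*}
and then run the same argument with $V = \Sigma^{-1/2}(X - \mu) \sim \cN(\zeros, \id)$, which now gives $\E[\norm{V}^2] = d$ exactly, and $A = \ip{f'(Y), \Sigma^{1/2} u}$ satisfying $\E[A^2] \leq \lip(f)^2\, \norm{\Sigma^{1/2} u}^2 \leq \lip(f)^2 \norm{\Sigma}$.

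The main technicality will be justifying the identity $\E[f''(Y)] = \Sigma_Y^{-1} \E[(Y - \mu_Y) f'(Y)^\top]$ when $f$ is only convex (and hence $f''$ only exists in the distributional sense recalled in the paper's notation section). I would handle this by first proving the identity for the Gaussian smoothing $f_\varrho(x) = \E[f(x + \varrho W)]$, where it reduces to classical integration by parts against the smooth Gaussian density of $Y$, and then pass to the limit $\varrho \to 0$; since $\lip(f_\varrho) \leq \lip(f)$, the Cauchy--Schwarz bounds above are uniform in $\varrho$ and so carry over. This is essentially the content of Proposition~11.4 of \cite{L24} that the lemma cites.
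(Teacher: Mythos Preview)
Your argument is correct. The paper does not include its own proof of this lemma, citing instead Proposition~11.4 of \cite{L24}; your derivation via the identity $s''(z)=\frac{\lambda}{1-\lambda}\,\Sigma^{-1}\E[(X-\mu)f'(Y)^\top]$ followed by componentwise Cauchy--Schwarz is precisely the standard route and matches the cited reference.
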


\begin{lemma}[Proposition 11.7, \citealt{L24}]\label{lem:s:lower}
If $\lambda \le \frac{1}{dL^2}$, then
\begin{align*}
    \E[f(X)] \leq s(\mu) + \frac{2}{\lambda} \tr(s''(\mu) \Sigma) + \frac{2 \delta d}{\lambda}.
\end{align*}
\end{lemma}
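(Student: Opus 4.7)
The starting observation is the identity (from the definition of $s$)
\[
    s(\mu) = \left(1 - \tfrac{1}{\lambda}\right)\E[f(X)] + \tfrac{1}{\lambda}\E[f(Y)], \qquad Y := (1-\lambda)X + \lambda\mu \sim \cN(\mu,(1-\lambda)^2\Sigma),
\]
which rearranges to $\E[f(X)] - s(\mu) = \tfrac{1}{\lambda}(\E[f(X)] - \E[f(Y)])$. Differentiating the definition of $s$ twice gives the companion identity $s''(\mu) = \lambda\E[f''(Y)]$, so the target inequality is equivalent to
\[
    \E[f(X)] - \E[f(Y)] \leq 2\lambda\tr(\Sigma\E[f''(Y)]) + 2\delta d.
\]

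\textbf{Step 1 (convexity and Stein).} Apply the convex tangent inequality at $X$ with $Y - X = -\lambda(X-\mu)$ to get $f(X) - f(Y) \leq \lambda \ip{f'(X), X-\mu}$. Take expectations and use Gaussian integration by parts $\E[\ip{f'(X), X-\mu}] = \tr(\Sigma\E[f''(X)])$ to obtain
\[
    \E[f(X)] - \E[f(Y)] \leq \lambda\tr(\Sigma\E[f''(X)]).
\]
It therefore suffices to show $\tr(\Sigma\E[f''(X)]) \leq 2\tr(\Sigma\E[f''(Y)]) + \tfrac{2\delta d}{\lambda}$, i.e.\ to compare the Hessian averaged against two Gaussians with the same mean but different covariance scales.

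\textbf{Step 2 (heat-equation representation).} Parametrise $W_t := \mu + \sqrt{t}\,\Sigma^{1/2}Z_0$ with $Z_0 \sim \cN(\zeros, \id)$, so $X = W_1$ and $Y = W_{(1-\lambda)^2}$. A direct calculation gives the heat-equation identity $\tfrac{d}{dt}\E[f(W_t)] = \tfrac{1}{2}\tr(\Sigma \E[f''(W_t)])$, hence
\[
    \E[f(X)] - \E[f(Y)] = \tfrac{1}{2}\int_{(1-\lambda)^2}^{1} h(t)\,dt, \qquad h(t) := \tr(\Sigma\E[f''(W_t)]).
\]
Since the interval has length at most $2\lambda$, it is enough to show $h(t) \leq 2\,h((1-\lambda)^2) + \tfrac{2\delta d}{\lambda}$ uniformly on $[(1-\lambda)^2,1]$.

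\textbf{Step 3 (pointwise control of $h$).} I would split on the magnitude of $Z_0$. On the bulk event $\{\norm{Z_0}^2 \leq dL\}$ the evaluation points $W_t$ for $t \in [(1-\lambda)^2,1]$ all lie within Mahalanobis distance $O(\sqrt{\lambda dL})$ of $Y$, so a first-order Taylor comparison of $\E[f''(W_t)]$ with $\E[f''(Y)]$, together with the uniform operator-norm bound $\norm{\Sigma^{1/2} s''(\cdot)\Sigma^{1/2}} \leq \tfrac{\lambda\lip(f)\sqrt{d\norm{\Sigma}}}{1-\lambda}$ from Lemma~\ref{lem:hess}, gives the factor-$2$ estimate $h(t) \leq 2\,h((1-\lambda)^2)$. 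On the complementary tail event, which has probability at most $\delta$ by Lemma~\ref{lem:gaussian-norm}, the same operator-norm bound combined with the tuning assumption $\lambda \leq 1/(dL^2)$ and with $\Sigma \succeq \delta\id$ produces the additive $\tfrac{2\delta d}{\lambda}$ slack.

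\textbf{Main obstacle.} The hard part is Step~3: a clean argument via monotonicity of $h(t)$ would be ideal, but fails because neither $h(t)$ nor $\E[f(W_t)]$ need be monotone or concave for a general convex $f$ (as already seen for $f(x)=|x|$ versus smooth strongly convex $f$). The proof must therefore combine the quantitative smoothness estimates of Lemma~\ref{lem:hess} with the small-$\lambda$ regime, carefully allocating the $\tfrac{2\delta d}{\lambda}$ slack between the Taylor remainder on the bulk and the Gaussian tail.
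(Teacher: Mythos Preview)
The paper does not prove this lemma itself (it is quoted from \citealt{L24}); the closest thing is its proof of Proposition~\ref{prop:hess}, which is precisely the inequality your Step~1 reduces to after tracing with $\Sigma$. Your opening identities and Step~1 are correct, and Step~2 is a legitimate (though unnecessary) detour once Step~1 has already isolated $\tr(\Sigma\,\E[f''(X)])$.

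The gap is Step~3. You want to compare $\E[f''(W_t)]$ with $\E[f''(Y)]$ via the coupling $\|W_t - Y\|_{\Sigma^{-1}} = O(\lambda\sqrt{dL})$ and a ``first-order Taylor comparison''. This does not work: $f$ is only assumed convex, so $f''$ need not even be continuous, and closeness of the coupled realisations $W_t, Y$ says nothing about $f''(W_t) - f''(Y)$. The bound you cite from Lemma~\ref{lem:hess} is on $s''$ (smoothed at a \emph{fixed} scale), not on $f''$, and is a factor $\lambda$ too weak for a pointwise $f''$ comparison; nor can one Taylor-expand in $t$, since $h'(t)$ brings in fourth derivatives. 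The missing idea --- exactly what the paper uses to prove Proposition~\ref{prop:hess} --- is a variance-matching identity that converts the problem into an $s''$-comparison: with $Z \sim \cN(\mu, (\tfrac{2}{\lambda}-1)\Sigma)$ independent of $X$ one has $(1-\lambda)X + \lambda Z \stackrel{d}{=} X$, hence $\lambda\,\E[f''(X)] = \E_Z[s''(Z)]$. Now Proposition~\ref{prop:s_hess}, which compares $s''$ at two points for the \emph{same} smoothing scale, gives $s''(Z) \preceq 2s''(\mu) + \delta\Sigma^{-1}$ on the high-probability event $\lambda\|Z - \mu\|_{\Sigma^{-1}} \leq L^{-1/2}$ (note $\lambda^2\xi^2 \leq 2\lambda \leq 2/(dL^2)$), and Lemma~\ref{lem:hess} handles the tail. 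Your coupling tries to compare $f''$ at two nearby realisations; the paper's identity instead compares $s''$ at $\mu$ and at a random $Z$, which is the only comparison the available tools actually support.
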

The quadratic surrogate is close to $s$ on a region around $x$ and consequently:

\begin{lemma}[Proposition 11.6, \citealt{L24}]\label{lem:q:lower2}
For all $z \in \R^d$ such that $\lambda \norm{z - \mu}_{\Sigma^{-1}} \leq 1/L$,
\begin{align*}
s(\mu) - s(z) \leq q(\mu) - q(z) + \frac{\delta}{\lambda^2} \,.
\end{align*}
\end{lemma}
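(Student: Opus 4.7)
}
Since $q(\mu) = 0$, the target inequality is equivalent to
\begin{align*}
\tfrac{1}{4}\norm{z-\mu}^2_{s''(\mu)} \;\leq\; \bigl[s(z) - s(\mu) - \ip{s'(\mu),\,z-\mu}\bigr] + \tfrac{\delta}{\lambda^2}\,.
\end{align*}
So the natural strategy is to compare the ``$\tfrac14$-quadratic'' against the full Bregman divergence of $s$. I would define $\phi(t) = s(\mu + t(z-\mu))$ for $t\in[0,1]$. Since $s$ is infinitely differentiable (Lemma~\ref{lem:s:basic}\ref{lem:s:basic:cvx}), the integral form of Taylor's theorem yields
\begin{align*}
s(z) - s(\mu) - \ip{s'(\mu),\,z-\mu} \;=\; \int_0^1 (1-t)\,\phi''(t)\,\d{t}\,, \qquad \phi''(t) = (z-\mu)^\top s''(\mu + t(z-\mu))(z-\mu)\,.
\end{align*}
Thus it suffices to prove the pointwise Hessian estimate
\begin{align*}
s''\bigl(\mu + t(z-\mu)\bigr) \;\succeq\; \tfrac{1}{2}\,s''(\mu) \;-\; \tfrac{2\delta}{\lambda^2}\,\frac{1}{\norm{z-\mu}^2}\,\id \qquad \text{for all } t\in[0,1],
\end{align*}
(or some variant with a similar scalar slack); integrating against $(1-t)$ and using $\int_0^1(1-t)\d{t}=\tfrac12$ collapses the first term to exactly $\tfrac14\norm{z-\mu}^2_{s''(\mu)}$, and the slack term contributes at most $\delta/\lambda^2$ as required.

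To establish the Hessian stability, I would use the integral representation
\begin{align*}
s''(w) \;=\; \lambda\,\E_{Y \sim \cN(\mu_w,\,(1-\lambda)^2\Sigma)}\!\bigl[f''(Y)\bigr]\,, \qquad \mu_w \;=\; (1-\lambda)\mu + \lambda w\,,
\end{align*}
which comes from differentiating under the expectation in the definition of $s$. Changing the measure from $\cN(\mu_w,(1-\lambda)^2\Sigma)$ to the reference Gaussian $\cN(\mu,(1-\lambda)^2\Sigma)$ introduces a likelihood ratio $R_w$ that only depends on the mean shift $\lambda(w-\mu)$. A direct computation bounds
\begin{align*}
\bigl|\log R_w(Y)\bigr| \;\leq\; \tfrac{\lambda}{1-\lambda}\norm{w-\mu}_{\Sigma^{-1}}\cdot \norm{Y-\mu}_{\Sigma^{-1}} + \tfrac{\lambda^2}{2(1-\lambda)^2}\norm{w-\mu}^2_{\Sigma^{-1}}\,,
\end{align*}
so the hypothesis $\lambda\norm{z-\mu}_{\Sigma^{-1}} \leq 1/L$ (with $L$ a large log factor) forces $R_w(Y)\in[\tfrac12,2]$ except on a set of Gaussian mass at most $\delta$. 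On the good set the Hessian integrand is preserved up to a factor of $2$ (giving the $\tfrac12 s''(\mu)$ lower bound); on the bad set one uses Lemma~\ref{lem:sigma}\ref{lem:sigma:H}, namely $s''(\mu)\preceq \delta^{-1}\id$ (or the analogous bound on $\bar H$), to absorb the discarded contribution into an additive error of order $\delta/\lambda^2$.

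\textbf{Main obstacle.} The hard part is tracking the error term cleanly through the change of measure: the Hessian integrand $\lambda f''(Y)$ is not bounded pointwise, so one cannot simply multiply by a uniform likelihood-ratio bound. I would instead split the expectation into the high-probability region $\{R_w(Y)\in[\tfrac12,2]\}$ (where a multiplicative $\tfrac12$ factor is straightforward) and its complement (controlled by the $\delta$-regularised spectral bound on $s''$ from Lemma~\ref{lem:sigma}\ref{lem:sigma:H} together with the Gaussian tail bound in Lemma~\ref{lem:gaussian-norm}). Balancing the resulting tail and multiplicative errors exactly produces the additive $\delta/\lambda^2$ slack in the statement, and the chained integration in $t$ over $[0,1]$ then yields the claimed inequality.
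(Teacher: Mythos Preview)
Your reduction to the Taylor/Bregman comparison and the plan to integrate a pointwise Hessian lower bound is exactly the right skeleton, and it matches the argument behind the cited Proposition~11.6 in \cite{L24}. The paper itself does not reprove this lemma; it merely imports it together with the Hessian-stability statement Proposition~\ref{prop:s_hess}.

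Two fixes will make your write-up clean. First, you do not need to redo the change-of-measure argument yourself: Proposition~\ref{prop:s_hess} already gives, for every $t\in[0,1]$ (since $\lambda\norm{\mu - (\mu+t(z-\mu))}_{\Sigma^{-1}} \le \lambda\norm{z-\mu}_{\Sigma^{-1}} \le 1/L \le 1/\sqrt{L}$),
\[
s''(\mu) \preceq 2\,s''\bigl(\mu+t(z-\mu)\bigr) + \delta\,\Sigma^{-1},
\]
i.e.\ $s''(\mu+t(z-\mu)) \succeq \tfrac12 s''(\mu) - \tfrac{\delta}{2}\Sigma^{-1}$. Integrating against $(1-t)$ on $[0,1]$ gives
\[
s(z)-s(\mu)-\ip{s'(\mu),z-\mu}\;\ge\;\tfrac14\norm{z-\mu}^2_{s''(\mu)} - \tfrac{\delta}{4}\norm{z-\mu}^2_{\Sigma^{-1}}
\;\ge\;\tfrac14\norm{z-\mu}^2_{s''(\mu)} - \tfrac{\delta}{4\lambda^2 L^2},
\]
which is stronger than the claim. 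Second, the slack you wrote, $\tfrac{2\delta}{\lambda^2}\norm{z-\mu}^{-2}\id$, is not the natural one and does not fall out of the likelihood-ratio computation; the correct additive term is $\delta\Sigma^{-1}$, and it is the hypothesis $\lambda\norm{z-\mu}_{\Sigma^{-1}}\le 1/L$ (not the Euclidean norm of $z-\mu$) that converts it into the scalar $\delta/\lambda^2$. Relatedly, Lemma~\ref{lem:sigma}\ref{lem:sigma:H} is a statement about the algorithm's iterates at times $t\le\tau$, not a general bound on $s''$; if you insist on handling the bad set by hand, the right tool is Lemma~\ref{lem:hess}, but invoking Proposition~\ref{prop:s_hess} directly avoids the issue entirely.
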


Combining Lemmas~\ref{lem:s:basic}, \ref{lem:s:lower} and \ref{lem:q:lower2} with the definition of $\delta$ and $\lambda$ yields the following. 

\begin{lemma}\label{lem:q:lower}
For all $z \in \R^d$ such that $\lambda \norm{z - \mu}_{\Sigma^{-1}} \leq 1/L$,
\begin{align*}
\E[f(X)] - f(z) \leq q(\mu) - q(z) + \frac{2}{\lambda} \tr(s''(\mu) \Sigma) + \frac{1}{n}\,.
\end{align*}
\end{lemma}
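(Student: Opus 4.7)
The claim is essentially a bookkeeping combination of the three cited lemmas, so I will lay out how the inequalities chain together and then check that the leftover error terms are absorbed by the $\tfrac{1}{n}$ slack via the chosen constants.

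First, I would invoke Lemma~\ref{lem:s:lower}, which (using the hypothesis $\lambda \leq 1/(dL^2)$ that follows from the tuning of $\lambda$ and $L$) gives
\begin{align*}
\E[f(X)] \;\leq\; s(\mu) + \frac{2}{\lambda}\tr(s''(\mu)\Sigma) + \frac{2\delta d}{\lambda}\,.
\end{align*}
Next, I would apply Lemma~\ref{lem:s:basic}\ref{lem:s:basic:opt}, i.e.\ the optimism bound $s(z)\leq f(z)$, which is equivalent to $-f(z)\leq -s(z)$. Adding this to the previous inequality yields
\begin{align*}
\E[f(X)] - f(z) \;\leq\; \bigl(s(\mu) - s(z)\bigr) + \frac{2}{\lambda}\tr(s''(\mu)\Sigma) + \frac{2\delta d}{\lambda}\,.
\end{align*}

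The second step is to replace $s(\mu)-s(z)$ by its quadratic approximation. Under the standing hypothesis $\lambda\|z-\mu\|_{\Sigma^{-1}}\leq 1/L$, Lemma~\ref{lem:q:lower2} applies and gives $s(\mu)-s(z)\leq q(\mu)-q(z)+\delta/\lambda^2$. Substituting produces
\begin{align*}
\E[f(X)] - f(z) \;\leq\; \bigl(q(\mu)-q(z)\bigr) + \frac{2}{\lambda}\tr(s''(\mu)\Sigma) + \frac{2\delta d}{\lambda} + \frac{\delta}{\lambda^2}\,.
\end{align*}

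The only remaining task is to argue that the two stray error terms collapse into $\tfrac{1}{n}$. The larger of the two is $\delta/\lambda^2$, so it suffices to check $\delta/\lambda^2 + 2\delta d/\lambda \leq 1/n$. Since the tuning has $\lambda$ polynomially small in $d$ and $L$ (concretely $\lambda = 1/(d^3 L^5)$ in the adversarial setting and $\lambda = 5/(Md^{3/2}L^3)$ in the stochastic setting), and since $\delta$ is assumed to be polynomially small in $n,d$ (the paper takes $\delta$ to be at most polylogarithmically small in $1/n,1/d$, which comfortably forces $\delta \leq \lambda^2/(2n)$ up to the universal constant hidden in $L$), both $2\delta d/\lambda$ and $\delta/\lambda^2$ are bounded by $1/(2n)$, giving the required $1/n$ slack. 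The only mild obstacle is this last check, and it is purely a matter of plugging in the prescribed constants — there is no genuine technical content beyond the chain of three lemmas.
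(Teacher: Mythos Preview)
Your proposal is correct and follows exactly the route the paper indicates: chain Lemma~\ref{lem:s:lower}, the optimism part of Lemma~\ref{lem:s:basic}, and Lemma~\ref{lem:q:lower2}, then absorb the residual $\tfrac{2\delta d}{\lambda}+\tfrac{\delta}{\lambda^2}$ into $\tfrac{1}{n}$ using the tuning of $\delta$ and $\lambda$. The paper's own justification is the one-line ``Combining Lemmas~\ref{lem:s:basic}, \ref{lem:s:lower} and \ref{lem:q:lower2} with the definition of $\delta$ and $\lambda$,'' so there is nothing to add.
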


The reason why the quadratic surrogate is a reasonable alternative to the surrogate $s$ is because the surrogate is nearly quadratic:

\begin{proposition}[Proposition 11.3, \cite{L24}]\label{prop:s_hess}
If $\lambda\norm{x-y}_{\Sigma^{-1}} \leq L^{-1/2}$, then 
\begin{align*}
s''(x)\preceq 2 s''(y) + \delta \Sigma^{-1} \,.
\end{align*}
\end{proposition}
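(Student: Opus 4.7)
The strategy is a change of measure between the two Gaussian laws $p_x = \cN((1-\lambda)\mu + \lambda x, (1-\lambda)^2 \Sigma)$ and $p_y = \cN((1-\lambda)\mu + \lambda y, (1-\lambda)^2 \Sigma)$, exploiting that when their Mahalanobis separation $\rho := \frac{\lambda}{1-\lambda}\norm{x-y}_{\Sigma^{-1}}$ is $O(L^{-1/2})$, the density ratio $r := p_x/p_y$ is log-Gaussian and sharply concentrated around $1$. The starting point is to differentiate twice inside the expectation defining $s$ to get $s''(z) = \lambda \E_{W \sim p_z}[f''(W)]$, where $p_z$ denotes the density of $(1-\lambda)X + \lambda z$.

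\textbf{Main argument.} Fix $v \in \R^d$. By change of measure,
\[
\lambda^{-1} v^\top s''(x) v = \E_{p_y}[r(W)\, v^\top f''(W) v].
\]
Split $r = \min(r, 2) + (r-2)_+$. Since $v^\top f''(W) v \ge 0$ by convexity, the bounded piece gives exactly $2 \lambda^{-1} v^\top s''(y) v$, and the excess
\[
T := \E_{p_y}[(r-2)_+ v^\top f''(W) v] = \E_{p_x}[(1-2/r)_+ v^\top f''(W) v] \le \E_{p_x}[\sind_A v^\top f''(W) v],
\]
where $A := \{r > 2\}$. It remains to show $T \le \delta\, \lambda^{-1} \norm{v}_{\Sigma^{-1}}^2$, which rearranged is exactly the claim after multiplying through by $\lambda$.

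\textbf{Tail bound.} Under $p_x$, $\log r \sim \cN(\rho^2/2, \rho^2)$ and so a Gaussian tail bound gives $\bbP_{p_x}(A) \le \exp(-\Omega(L))$ thanks to $\rho \lesssim L^{-1/2}$. A Cauchy--Schwarz combined with the second-moment control of $v^\top f''(W) v$ implicit in Lemma~\ref{lem:hess} (polynomial in $d$, $\lip(f)$, and $\norm{\Sigma^{-1}} \le 1/\delta$ via Lemma~\ref{lem:sigma}) bounds $T$ by the product of $\exp(-\Omega(L))$ and a polynomial in $d, n, 1/\delta$. Because $L = C[1 + \log\max(n,d,1/\delta)]$ with $C$ an arbitrarily large universal constant, the exponential absorbs every polynomial prefactor and yields $T \le \delta\, \lambda^{-1} \norm{v}_{\Sigma^{-1}}^2$ as required. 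Since $v$ is arbitrary, the operator inequality follows.

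\textbf{Main obstacle.} The difficult part is the tail bound, not the change-of-measure scaffolding. The extension $f$ is not globally smooth---its Lipschitz constant scales like $d/\epsilon = O(d^{1.5}\sqrt{n})$ by Lemma~\ref{lem:lip}---so no pointwise operator-norm bound on $v^\top f''(W) v$ is available. One has to lean entirely on averaged moment bounds for $f''$ and let the exponentially small probability of the bad event $A$ overwhelm the polynomial growth. Tuning constants so that the slack really is of order $\delta \Sigma^{-1}$ rather than some polynomially worse quantity is the only delicate point; morally the argument just expresses that two Gaussians with $O(L^{-1/2})$-close means are indistinguishable for any test function whose moments grow at most polynomially.
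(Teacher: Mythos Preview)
Your change-of-measure scaffolding is correct: the identity $s''(z) = \lambda\,\E_{p_z}[f''(W)]$, the splitting $r = \min(r,2) + (r-2)_+$, the rewriting $T = \E_{p_x}[(1-2/r)_+\, v^\top f''(W) v] \le \E_{p_x}[\sind_A\, v^\top f''(W) v]$, and the Gaussian tail bound $\bbP_{p_x}(r>2) \le \exp(-\Omega(L))$ are all sound. The gap is the Cauchy--Schwarz step. The function $f$ is only assumed convex and Lipschitz, so $f''$ exists only in the distributional sense set up in the paper's ``Distribution theory'' paragraph: $\E[f''(X)]$ is defined via integration by parts against the Gaussian density, and there is no pointwise random variable $v^\top f''(W) v$ whose square you may integrate. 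For piecewise-linear $f$ the Hessian is a singular measure and $(v^\top f''(W) v)^2$ is meaningless; under the smoothing $f_\varrho$ the second moment blows up as $\varrho\to 0$, so the limit does not close. Lemma~\ref{lem:hess} bounds $\|s''(z)\|$ and $\|\Sigma^{1/2}s''(z)\Sigma^{1/2}\|$, which are \emph{first} moments of $f''$ against a Gaussian; it contains no second-moment information, so the appeal to it here does not go through.

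The repair stays entirely within your framework but avoids squaring $f''$. Replace Cauchy--Schwarz by the pointwise bound $\sind_A \le (r/2)^k$ for an integer $k \ge 1$. A direct Gaussian calculation gives $p_x^{k+1}/p_y^{k} = \exp\!\big(\tfrac{k(k+1)}{2}\rho^2\big)\, p_{z_k}$ with $z_k = (k{+}1)x - ky$, whence
\[
T \;\le\; 2^{-k}\,\E_{p_y}\!\big[r^{k+1}\, v^\top f''(W) v\big]
\;=\; 2^{-k} e^{k(k+1)\rho^2/2}\,\lambda^{-1}\, v^\top s''(z_k)\, v\,.
\]
Now Lemma~\ref{lem:hess}\textit{(b)} applies legitimately to $s''(z_k)$ and yields $v^\top s''(z_k) v \le \frac{\lambda\,\lip(f)}{1-\lambda}\sqrt{d\|\Sigma\|}\,\|v\|^2_{\Sigma^{-1}}$. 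Choosing $k \approx (\log 2)/\rho^2 = \Theta(L)$ makes $2^{-k}e^{k(k+1)\rho^2/2} = \exp(-\Omega(L))$, which absorbs the polynomial prefactor exactly as you argued and delivers $T \le \delta\lambda^{-1}\|v\|^2_{\Sigma^{-1}}$. The paper does not give its own proof of this proposition (it is cited from \cite{L24}), so there is no in-paper argument to compare against; the above is the natural completion of your approach.
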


Based on this we prove a new result:

\begin{proposition}\label{prop:hess}
If $\lambda \le \frac{1}{dL^2}$, then
$\E[f''(X)] \preceq \frac{2 s''(\mu)}{\lambda} + \delta(\Sigma^{-1} + \id) $.
\end{proposition}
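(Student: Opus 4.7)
}

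The plan is to reduce $\E[f''(X)]$ to an average of surrogate Hessians and then invoke Proposition~\ref{prop:s_hess} together with a concentration argument. The starting point is the identity, obtained from the definition of $s$ by differentiating twice in $z$ and changing variables (or, equivalently, Gaussian integration by parts applied to the density of $(1-\lambda)X + \lambda z$):
\begin{align*}
s''(z) = \lambda \, \E\bigl[f''\bigl(\mu + (1-\lambda)(X-\mu) + \lambda(z - \mu)\bigr)\bigr],
\end{align*}
so in particular $s''(\mu)/\lambda = \E[f''(Y)]$ for $Y \sim \cN(\mu, (1-\lambda)^2 \Sigma)$. Now introduce an independent $U \sim \cN(0, \sigma_U^2 \Sigma)$ with $\sigma_U^2 = (2-\lambda)/\lambda$, chosen precisely so that $\lambda U + (1-\lambda)\Sigma^{1/2} W$ (with $W \sim \cN(0,\id)$ independent of $U$) has the distribution of $\Sigma^{1/2} W$. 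Taking the tower expectation in the identity above evaluated at $z = \mu + U$ yields the key reduction
\begin{align*}
\E[f''(X)] = \frac{1}{\lambda}\, \E_U\bigl[ s''(\mu + U)\bigr].
\end{align*}

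Next I would split this expectation along the event $E = \{\lambda \norm{U}_{\Sigma^{-1}} \leq L^{-1/2}\}$. Since $\norm{U}^2_{\Sigma^{-1}}$ is $\sigma_U^2 \approx 2/\lambda$ times a $\chi^2_d$ variable and the hypothesis $\lambda \leq 1/(dL^2)$ places $1/(\lambda^2 L)$ comfortably above $d \sigma_U^2$, the chi-square tail (cf.\ Lemma~\ref{lem:gaussian-norm}) gives $\bbP(E^c) \leq \delta^k$ for any fixed $k$, up to absorbing constants into $L$. On $E$, Proposition~\ref{prop:s_hess} applies directly to the pair $(x,y) = (\mu + U, \mu)$ to give $s''(\mu + U) \preceq 2 s''(\mu) + \delta \Sigma^{-1}$, and integrating this pointwise bound yields the $2 s''(\mu)/\lambda + \delta \Sigma^{-1}$ part of the conclusion.

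The only remaining piece is to handle the tail contribution $\frac{1}{\lambda}\E_U[s''(\mu + U)\sind_{E^c}]$, which I expect to be the main obstacle. Using the uniform Hessian bound from Lemma~\ref{lem:hess} together with the spectral bound $\bar \Sigma^{-1} \preceq \id/\delta$ from Lemma~\ref{lem:sigma}, one has $s''(z) \preceq \frac{\lambda \lip(f)\sqrt{d/\delta}}{1-\lambda}\,\id$ uniformly in $z$. The tail probability $\bbP(E^c)$ can be driven below any polynomial power of $\delta$ by enlarging $L$, which (combined with the lip bound $\lip(f) \lesssim d/\epsilon$ from Lemma~\ref{lem:lip} and the tuning of $\epsilon$) is enough to bound this contribution by $\delta\, \id$. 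Adding the two pieces gives $\E[f''(X)] \preceq \frac{2 s''(\mu)}{\lambda} + \delta(\Sigma^{-1} + \id)$ as required. The trickiest accounting is in Step 4: one must check that the polynomial tail decay of $\norm{U}^2_{\Sigma^{-1}}$ beats the worst-case size of $\lip(f)/\sqrt{\delta}$ after the division by $\lambda$, which is precisely where the logarithmic slack in $L$ and the condition $\lambda \leq 1/(dL^2)$ are consumed.
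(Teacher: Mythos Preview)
Your proposal is correct and essentially identical to the paper's argument: your auxiliary variable $\mu + U$ coincides with the paper's $Z \sim \cN(\mu, \xi^2\Sigma)$ with $\xi^2 = 2/\lambda - 1 = (2-\lambda)/\lambda$, and the decomposition into the good event $E$ (handled via Proposition~\ref{prop:s_hess}) and its complement (handled via the uniform Hessian bound from Lemma~\ref{lem:hess} together with the Gaussian tail $\bbP(E^c)\leq\exp(-L)$) is exactly what the paper does. One minor correction: the bound $\norm{\Sigma^{-1}}\leq 1/\delta$ you invoke in the tail step is a standing assumption in this section, not a consequence of Lemma~\ref{lem:sigma}.
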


\begin{proof}
Let $X\sim \cN(\mu,\Sigma)$, and $\xi^2=\frac{2}{\lambda} - 1$, which is chosen so that if $Z$ has law $\cN(\mu, \xi^2 \Sigma^2)$ and is independent of $X$, then
$(1-\lambda)X+\lambda Z$ has the same law as $X$. 
Thus, by definition 
\begin{align*}
\E[s''(Z)]=\lambda \E[f''((1-\lambda)X+\lambda Z)]=\lambda \E[f''(X)] \,.
\end{align*}
Define an event $E = \{\lambda \norm{Z - \mu}_{\Sigma^{-1}} \leq L^{-1/2}\}$.
Proposition \ref{prop:s_hess} shows that on the event $E$, $s''(Z)\preceq 2s''(\mu) + \delta \Sigma^{-1}$, which means that 
\begin{align*}
\E[s''(Z)]
&=\E\left[\sind_E s''(Z)\right] + \E\left[\sind_{E^c} s''(Z)\right] \\
&\preceq \delta \Sigma^{-1} + 2 s''(\mu) + \E\left[\sind_{E^c} s''(Z)\right] \\
\tag*{by Lemma~\ref{lem:hess}}
&\preceq \delta \Sigma^{-1} + 2 s''(\mu) + \frac{\lambda \lip(f)}{1-\lambda} \sqrt{d \norm{\Sigma^{-1}}} \id \bbP(E^c) \,.
\end{align*}
We now show that $\bbP(E^c)$ is small.
Let $W$ have law $\cN(\zeros, \id)$. By the definition of $Z$,
\begin{align*}
\bbP(E^c)
&= \bbP\left(\norm{Z - \mu}^2_{\Sigma^{-1}} > \frac{1}{L \lambda^2}\right) \\
&= \bbP\left(\norm{W}^2 > \frac{1}{L \xi^2 \lambda^2}\right) \\
&= \bbP\left(\norm{W}^2 > \frac{1}{(2 - \lambda) \lambda L}\right) \\
\tag*{By Lemma~\ref{lem:gaussian-norm}}
&\leq \exp(-L) \,.
\end{align*}
Therefore,
\begin{align*}
\lambda \E[f''(X)] 
&= \E[s''(Z)] \\ 
&\preceq 2 s''(\mu) + \delta \Sigma^{-1} + \frac{\lambda \lip(f)}{1 - \lambda} \sqrt{d \norm{\Sigma^{-1}}} \exp(-L) \id \\
&\preceq 2 s''(\mu) + \delta (\Sigma^{-1} + \id)  \,,
\end{align*}
where in the final inequality we used the definitions of the constants.
\end{proof}

\paragraph{Estimation}
The surrogate $s$ is not directly observed, but can be estimated from data collected by a bandit algorithm.
Let $Y = f(X) + \epsilon$ where $\E[\epsilon] = 0$ and $\E[\exp(\epsilon^2)] \leq 2$ and let
\begin{align}
\hat s(z) &= \left(1 + \frac{r(X, z) - 1}{\lambda}\right) Y &
\hat q(z) &= \ip{\hat s'(x), z - x} + \frac{1}{4} \norm{z - x}^2_{\hat s''(x)} \,.
\label{eq:s:est}
\end{align}
where $r$ is a change-of-measure function defined by
\begin{align*}
r(X, z) &= \frac{p\left(\frac{X - \lambda z}{1 - \lambda}\right)}{(1 - \lambda)^d p(X)} \,. 
\end{align*}
The gradient and Hessian of $\hat s_t$ are given by
\begin{align*}
\hat s'_t(z) &= \frac{Y_t R_t(z)}{1-\lambda} \Sigma_t^{-1}\left[\frac{X_t - \lambda z}{1 - \lambda} - \mu_t\right] \\
\hat s''_t(z) &= \frac{\lambda Y_t R_t(z)}{(1 - \lambda)^2}\left(\Sigma_t^{-1}
\left[\frac{X_t - \lambda z}{1 - \lambda} - \mu_t\right]\left[\frac{X_t - \lambda z}{1 - \lambda} - \mu_t\right]^\top \Sigma_t^{-1} - \Sigma_t^{-1}\right)\,.
\end{align*}

\begin{lemma}[Lemma 11.10, \citealt{L24}]\label{lem:Rtsmall}
    $R_t(\mu_t) \leq 3$ for all $t$.
\end{lemma}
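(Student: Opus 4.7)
The plan is to compute $R_t(\mu_t)$ in closed form and then bound it using the fact that $\lambda$ is tuned to be very small relative to $d$. Writing $p_t$ for the density of $\cN(\mu_t, \Sigma_t)$, the key observation is that
\begin{align*}
\frac{X_t - \lambda \mu_t}{1 - \lambda} - \mu_t = \frac{X_t - \mu_t}{1 - \lambda}\,,
\end{align*}
so that in the exponent of the Gaussian we pick up a factor of $(1-\lambda)^{-2}$ relative to $\|X_t - \mu_t\|^2_{\Sigma_t^{-1}}$. This gives the explicit formula
\begin{align*}
R_t(\mu_t) = \frac{1}{(1-\lambda)^d} \exp\!\left(-\frac{1}{2}\left[\frac{1}{(1-\lambda)^2} - 1\right] \norm{X_t - \mu_t}^2_{\Sigma_t^{-1}}\right)\,.
\end{align*}

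The second step is to drop the exponential, which is at most $1$ since the bracketed factor is non-negative for $\lambda \in (0,1)$. It then remains to show that $(1-\lambda)^{-d} \leq 3$. Using the elementary inequality $-\log(1-\lambda) \leq 2\lambda$ for $\lambda \in (0, 1/2)$, we get $(1-\lambda)^{-d} \leq \exp(2 d \lambda)$, so it suffices to check that $2 d \lambda \leq \log 3$. This is immediate from the tuning of the constants in both the adversarial setting (where $\lambda = 1/(d^3 L^5)$, giving $d\lambda = 1/(d^2 L^5)$) and the stochastic setting (where $\lambda = 5/(M d^{3/2} L^3)$ with $M \leq d^{-1/4}$, giving $d\lambda \leq 5/(d^{1/4} L^3)$). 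In both cases $d\lambda \ll 1$, so $(1-\lambda)^{-d} \leq e^{2d\lambda} \leq 3$.

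There is no real obstacle here, the proof is essentially a one-line Gaussian density calculation plus a log expansion. The only thing to be mildly careful about is confirming that the tuning of $\lambda$ in both regimes of the algorithm makes $d\lambda$ small enough; this can be absorbed into the universal constants by taking $L$ sufficiently large, which is permitted by the convention that $L$ hides a suitably large constant $C$.
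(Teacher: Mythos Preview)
Your proof is correct and follows the natural route: compute $R_t(\mu_t)$ explicitly as $(1-\lambda)^{-d}$ times an exponential that is at most $1$, then verify $(1-\lambda)^{-d}\le 3$ from the tuning $d\lambda \ll 1$. The paper does not give its own proof here; it merely cites Lemma~11.10 of \cite{L24}, and the argument there is exactly this density computation.

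One minor slip: in the stochastic case you used the inequality in the wrong direction. Since $\lambda = 5/(M d^{3/2} L^3)$, to \emph{upper} bound $d\lambda = 5/(M d^{1/2} L^3)$ you need a \emph{lower} bound on $M$, namely $M \ge d^{-1/2}$, which gives $d\lambda \le 5/L^3$. Using $M \le d^{-1/4}$ as you did yields a lower bound on $d\lambda$, not an upper bound. The conclusion is unaffected --- $5/L^3$ is still comfortably below $\tfrac{1}{2}\log 3$ for the universal constant $C$ hidden in $L$ --- but the inequality should be flipped. (Alternatively, the paper elsewhere assumes $\lambda \le 1/(dL^2)$, which directly gives $d\lambda \le 1/L^2$ and sidesteps any case analysis.)
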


\begin{lemma}[Propositions 11.6 and 11.7, \citealt{L24}]\label{lem:unbiased}
The following hold:
\begin{multicols}{3}
\begin{enumerate}
\item $\E[\hat s(z)] = s(z)$.
\item $\E[\hat s'(z)] = s'(z)$.
\item $\E[\hat s''(z)] = s''(z)$.
\end{enumerate}
\end{multicols}
\end{lemma}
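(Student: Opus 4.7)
The plan is to prove all three unbiasedness identities by a single change-of-variables argument, and then bootstrap parts (b) and (c) from part (a) by differentiating under the expectation.

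First I would verify part (a). Write $Y_t = f_t(X_t) + \epsilon_t$ with $\E_{t-1}[\epsilon_t \mid X_t] = 0$, so
\begin{align*}
\E_{t-1}\left[R_t(z) Y_t\right] = \E_{t-1}\left[R_t(z) f_t(X_t)\right] = \int_{\R^d} \frac{p_t\!\left(\tfrac{x - \lambda z}{1-\lambda}\right)}{(1-\lambda)^d p_t(x)}\, f_t(x)\, p_t(x)\, \d{x}\,.
\end{align*}
The densities cancel, and substituting $u = (x - \lambda z)/(1-\lambda)$ (so $x = (1-\lambda)u + \lambda z$ and $\d{x} = (1-\lambda)^d \d{u}$) gives
\begin{align*}
\E_{t-1}\left[R_t(z) Y_t\right] = \int_{\R^d} p_t(u)\, f_t\!\left((1-\lambda)u + \lambda z\right) \d{u} = \E_{t-1}\left[f_t((1-\lambda) X_t + \lambda z)\right]\,.
\end{align*}
Plugging this into $\hat s_t(z) = Y_t\left[1 - \tfrac{1}{\lambda} + \tfrac{R_t(z)}{\lambda}\right]$ and using $\E_{t-1}[Y_t] = \E_{t-1}[f_t(X_t)]$ recovers exactly the definition of $s_t(z)$.

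For parts (b) and (c), the natural approach is to differentiate the identity $s_t(z) = \E_{t-1}[\hat s_t(z)]$ in $z$. Since $\hat s_t(z)$ depends on $z$ only through the smooth factor $R_t(z)$ (a ratio of Gaussian densities), $\hat s_t$ is infinitely differentiable in $z$, and its first and second derivatives can be computed explicitly in terms of $\Sigma_t^{-1}$ and the argument $(X_t - \lambda z)/(1-\lambda) - \mu_t$, as recorded just before the lemma. It therefore suffices to justify exchanging differentiation and expectation: for each fixed $z$, take a bounded neighbourhood $B$ of $z$ and upper bound $\sup_{z' \in B} \|\hat s_t'(z')\|$ and $\sup_{z' \in B}\|\hat s_t''(z')\|$ by an integrable random variable. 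Because $R_t(z')$ is a Gaussian ratio with bounded-in-$B$ exponential factor and $Y_t$ has the subgaussian/Lipschitz control established in Lemma~\ref{lem:Ybounds}, a crude polynomial-in-$\|X_t\|$ envelope times a Gaussian tail is integrable, and dominated convergence lets us conclude
\begin{align*}
s_t'(z) = \E_{t-1}[\hat s_t'(z)]\,, \qquad s_t''(z) = \E_{t-1}[\hat s_t''(z)]\,.
\end{align*}

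The only real subtlety is the integrability condition needed to swap differentiation and expectation; once that is set up, everything else is a mechanical change of variables. Since the lemma is quoted from \citep{L24}, I would be brief on the domination step and simply cite the standard bound on $R_t$ together with the Gaussian tails of $X_t$ to justify the exchange.
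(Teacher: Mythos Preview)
Your proposal is correct and follows the standard proof of this result. The paper itself does not give a proof: the lemma is stated with a citation to Propositions~11.6 and~11.7 of \cite{L24} and left at that, so there is no in-paper argument to compare against. Your change-of-variables computation for part~(a) and the differentiation-under-the-integral argument for parts~(b) and~(c) are exactly how one proves this.

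One small remark on the domination step: citing Lemma~\ref{lem:Ybounds} is slightly off, since that lemma gives high-probability bounds specific to the algorithm's $Y_t$ rather than an integrable envelope. In the abstract setting of Appendix~\ref{app:surrogate} where the lemma is stated, $Y = f(X) + \epsilon$ with $f$ Lipschitz and $X$ Gaussian, so $|f(X)|$ grows at most linearly in $\|X\|$ and $\epsilon$ is subgaussian; combined with the fact that $R_t(z')$ and its $z$-derivatives on a bounded neighbourhood $B$ are bounded by a polynomial in $\|X_t - \mu_t\|_{\Sigma_t^{-1}}$ times a bounded exponential factor, this already gives an integrable envelope without appealing to Lemma~\ref{lem:Ybounds}. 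This is a cosmetic fix and does not affect the substance of your argument.
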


\paragraph{Sequential concentration}
We also need concentration for sums of surrogate estimates and quadratic surrogate estimates.
Let $X_1,Y_1,\ldots,X_n,Y_n$ be a sequence of random elements adapted to a filtration $(\sF_t)_{t=1}^n$ and assume that
$Y_t = f(X_t) + \epsilon_t$ and assume that $\bbP_{t-1}(X_t = \cdot)$ has law $\cN(\mu_t, \Sigma_t)$ and $\epsilon_t$ is conditionally subgaussian:
\begin{align*}
\E[\epsilon_t | \cF_{t-1}, X_t] &= 0 &
\E[\exp(\epsilon_t^2)|\cF_{t-1}, X_t] \leq 2 \,.
\end{align*}
Let $\tau$ be a stopping time with respect to $(\sF_t)_{t=1}^n$ and let 
\begin{align*}
s_t(z) &= \E\left[\left(1 - \frac{1}{\lambda}\right) f_t(X_t) + \frac{1}{\lambda} f_t((1 - \lambda) X_t + \lambda z)\right] \\
q_t(z) &= \ip{s'_t(\mu_t), z - \mu_t} + \frac{1}{4} \norm{z - \mu_t}^2_{s''_t(z)} \,.
\end{align*}
Then let $\hat s_t$ and $\hat q_t$ be defined as in \cref{eq:s:est} but with $X_t$ instead of $X$ and similarly $Y_t$ for $Y$ and so on. 
We let $\Ymax = \max_{1 \leq t \leq \tau} |Y_t|$.
Lastly, given an $r > 0$, let $K_0(r) = K$ and
\begin{align*}
K_t(r) = \{x \in K_{t-1}(r) : \lambda \norm{x - \mu_t}_{\Sigma_t^{-1}} \leq r\}\,.
\end{align*}

\begin{lemma}[Proposition 11.19, \citealt{L24}]\label{lem:s:conc-s}
Let $z \in \R^d$ and assume that $z \in K_\tau(1/\sqrt{2L})$ almost surely.  
Then, with probability at least $1 - \delta$,
\begin{align*}
\left| \sum_{t=1}^\tau (s_t(z) - \hat s_t(z)) \right| \leq 1 + \frac{1}{\lambda} \left[\sqrt{\bar M_\tau L} + L \Ymax\right]\,.
\end{align*}
\end{lemma}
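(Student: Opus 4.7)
By unbiasedness (Lemma~\ref{lem:unbiased}) the sequence $D_t := \hat s_t(z) - s_t(z)$ is a martingale difference with respect to $(\sF_t)$, so the approach is to apply a martingale Bernstein (Freedman) inequality to $\sum_{t \le \tau} D_t$. Two inputs are needed: an almost-sure envelope $|D_t| \le M$ with $M = O(\poly(L)\,\Ymax/\lambda)$ and a conditional variance bound $\sum_{t \le \tau} \E_{t-1}[D_t^2] \le V$ with $V = O(\bar M_\tau/\lambda^2)$. Stopping at $\tau$ causes no trouble since Freedman holds in its stopping-time form (or can be derived via Lemma~\ref{lem:conc}).

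\textbf{Reducing to moments of $R_t(z)$.} Both inputs reduce, via the decomposition $\hat s_t(z) = Y_t(1 + (R_t(z) - 1)/\lambda)$, to controlling the importance weight $R_t(z)$. Since $p_t$ is Gaussian, $R_t(z)$ has an explicit exponential form in $X_t - \mu_t$ and $z - \mu_t$, and a direct Gaussian integral gives
\[
\E_{t-1}[R_t(z)^2] \;=\; \frac{1}{(1-\lambda)^d (1+2\lambda - \lambda^2)^{d/2}}\exp\!\left(\frac{\lambda^2 \norm{z - \mu_t}^2_{\Sigma_t^{-1}}}{1 + 2\lambda - \lambda^2}\right) \;=\; O(1),
\]
where the last equality uses the hypothesis $\lambda \norm{z - \mu_t}_{\Sigma_t^{-1}} \le 1/\sqrt{2L}$ (to bound the exponential) together with the algorithm's tuning $d\lambda \ll 1$ (to bound the normalisation). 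A brief conditional-expectation calculation (conditioning first on $X_t$ to dispense with the noise $\epsilon_t$, then applying Cauchy--Schwarz with the moment bound just derived) upgrades this to $\E_{t-1}[\hat s_t(z)^2] \le O(\E_{t-1}[Y_t^2]/\lambda^2)$, yielding the variance input upon summation. For the almost-sure envelope, on the Gaussian concentration event \eventGaussian{} where $\norm{X_t - \mu_t}_{\Sigma_t^{-1}}^2 \le dL$, the same exponential formula yields $R_t(z) \le \poly(L)$ pointwise, so $|D_t| \le \poly(L)\,\Ymax/\lambda$ on \eventGaussian{}.

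\textbf{Assembly and main obstacle.} Applying Freedman's inequality restricted to \eventGaussian{} (with a truncation argument handling its complement, which has probability at most $\delta$), with probability at least $1 - O(\delta)$,
\[
\left|\sum_{t=1}^\tau D_t\right| \;\le\; O\!\left(\sqrt{V L} + M L\right) \;\le\; \frac{L}{\lambda}\Bigl[\sqrt{\bar M_\tau} + L\Ymax\Bigr]
\]
after absorbing universal constants into $L$; the additive $+1$ in the statement provides a buffer for the residual contribution on $\eventGaussian^c$, bounded crudely. The principal technical difficulty is the Gaussian moment arithmetic for $R_t(z)$: the multiplicative normalisation $(1-\lambda)^{-d}(1+2\lambda - \lambda^2)^{-d/2}$ and the exponent in $\lambda^2 \norm{z - \mu_t}^2_{\Sigma_t^{-1}}$ must be controlled simultaneously, which is precisely where the hypothesis $z \in K_\tau(1/\sqrt{2L})$ is used together with the built-in tuning $d\lambda \ll 1$; without both of these the variance bound $O(1)$ on $\E_{t-1}[R_t(z)^2]$ breaks and the rest of the argument collapses.
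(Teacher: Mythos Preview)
The paper does not prove this lemma; it is quoted verbatim from \cite{L24} (Proposition 11.19) without argument. Your overall strategy---treat $D_t=\hat s_t(z)-s_t(z)$ as a martingale difference and apply a Bernstein/Freedman bound, with the variance controlled by the second moment of the importance weight $R_t(z)$---is the natural one and almost certainly the one used there. Your closed-form computation of $\E_{t-1}[R_t(z)^2]$ is correct.

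The gap is in your envelope step. On \eventGaussian{} you only have $\norm{X_t-\mu_t}_{\Sigma_t^{-1}}\le\sqrt{dL}$, and the exponent of $R_t(z)$ contains the cross term
\[
\frac{1}{(1-\lambda)^2}\ip{X_t-\mu_t,\ \lambda(z-\mu_t)}_{\Sigma_t^{-1}}
\;\le\;\frac{\sqrt{dL}\cdot 1/\sqrt{2L}}{(1-\lambda)^2}\;=\;O(\sqrt{d}),
\]
so the pointwise bound on \eventGaussian{} is $R_t(z)\le e^{O(\sqrt d)}$, \emph{not} $\poly(L)$. Fed into Freedman this yields an $e^{O(\sqrt d)}\,\Ymax L/\lambda$ term, which cannot be absorbed into the stated $L\,\Ymax/\lambda$. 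The event \eventGaussian{} is simply the wrong truncation for this purpose.

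The fix is to truncate $R_t(z)$ directly via a high-moment bound rather than via \eventGaussian{}. Extending your second-moment calculation, for any integer $k$ with $k\lambda\ll 1$ one gets
\[
\E_{t-1}[R_t(z)^k]\;\le\;\exp\!\Big(\tfrac{k(k-1)}{2}\,\lambda^2\norm{z-\mu_t}^2_{\Sigma_t^{-1}}+O(dk\lambda^2)\Big)
\;\le\;\exp\!\big(k^2/(4L)\big),
\]
using precisely the hypothesis $\lambda\norm{z-\mu_t}_{\Sigma_t^{-1}}\le 1/\sqrt{2L}$. Taking $k\asymp L$ and applying Markov gives $\bbP_{t-1}(R_t(z)>e)\le e^{-\Omega(L)}\le\delta/n$, so one may truncate at a \emph{constant} level. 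With this truncation both the envelope $|D_t|=O(\Ymax/\lambda)$ and the conditional variance $\E_{t-1}[D_t^2]=O(\E_{t-1}[Y_t^2]/\lambda^2)$ become immediate (the latter avoids the $Y_t$--$R_t$ correlation issue that your Cauchy--Schwarz step glosses over, since on the truncated event $|\hat s_t(z)|\le O(|Y_t|/\lambda)$ deterministically). Freedman then delivers the stated bound with the $+1$ absorbing the residual on the exceptional set.
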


\begin{lemma}[Proposition 11.19, \citealt{L24}]\label{lem:s:conc-s-uniform}
With probability at least $1 - \delta$,
\begin{align*}
\max_{z \in K_\tau(1/\sqrt{2dL})} \left| \sum_{t=1}^\tau (s_t(z) - \hat s_t(z)) \right| \leq 2 + \frac{1}{\lambda} \left [ \sqrt{d \bar M_\tau L} + d L \Ymax\, \right ].
\end{align*}
\end{lemma}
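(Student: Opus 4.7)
The plan is a standard $\varrho$-net argument that lifts the pointwise bound of Lemma~\ref{lem:s:conc-s} to a uniform bound. The extra $\sqrt{d}$ and $d$ factors in the uniform bound (as compared with the pointwise bound) reflect precisely the $\log$ cost of covering a $d$-dimensional set, since $\log|\mathcal{N}| = \Theta(d \log(1/\varrho))$ for any $\varrho$-net $\mathcal{N}$ of a bounded subset of $\R^d$, which enters as an additive term in the effective log factor $L$.

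First I would construct a $\varrho$-net $\mathcal{N}$ of $K$ (with respect to the Euclidean norm); a standard volume argument gives $|\mathcal{N}| \leq (3\diam(K)/\varrho)^d$. Applying Lemma~\ref{lem:s:conc-s} at each point of $\mathcal{N} \cap K_\tau(1/\sqrt{2dL})$ with failure probability $\delta/|\mathcal{N}|$ and taking a union bound yields, with probability at least $1-\delta$, that for every net point $z'$ one has
\begin{align*}
\left|\sum_{t=1}^\tau (s_t(z') - \hat s_t(z'))\right| \leq 1 + \frac{1}{\lambda}\left[\sqrt{\bar M_\tau L'} + L' \Ymax\right],
\end{align*}
where $L' = L + O(d \log(\diam(K)/\varrho))$. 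Provided $\varrho$ is chosen polynomial in $n,d$, the definition $L = C[1 + \log\max(n,d,1/\delta)]$ ensures $L' = O(dL)$, which already recovers the leading factors $\sqrt{d\bar M_\tau L}$ and $dL\Ymax$ in the target bound.

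The remaining step is to pass from the net to arbitrary $z \in K_\tau(1/\sqrt{2dL})$. Picking $z' \in \mathcal{N}$ with $\norm{z-z'}\leq \varrho$, the approximation error is at most $\varrho \sum_{t=1}^\tau [\lip(s_t) + \lip(\hat s_t)]$, where the Lipschitz constants are measured on $K_\tau(1/\sqrt{2dL}) + \ball(\varrho)$. For $s_t$ this is controlled via $\lip(s_t) \leq \lip(f_t) = O(d/\epsilon)$ (Lemma~\ref{lem:lip}). For $\hat s_t$ one differentiates the explicit formula and uses that the gradient of the Gaussian density $R_t(z)$ in $z$ is a linear term multiplied by $\Sigma_t^{-1}$; on the enlarged set the quantity $\norm{(X_t-\lambda z)/(1-\lambda) - \mu_t}_{\Sigma_t^{-1}}$ is polynomially bounded, $\norm{\Sigma_t^{-1}} \leq 1/\delta$ by Lemma~\ref{lem:sigma}, and $|Y_t| \leq \Ymax$, so $\lip(\hat s_t)$ is polynomial in $n, d, L, \Ymax, 1/\epsilon, 1/\lambda, 1/\delta$.

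The main obstacle is carrying out this last bookkeeping cleanly, since a naive bound on $\lip(\hat s_t)$ involves several large factors. However, none of them matter beyond being polynomial: setting $\varrho$ to be the reciprocal of a sufficiently large polynomial in these quantities makes the total approximation error at most $1$ (which together with the net bound's constant $1$ accounts for the constant $2$ in the target inequality), while $\log(1/\varrho) = O(L)$ keeps $L' = O(dL)$ as required. Combining the net bound with the Lipschitz approximation error then yields the stated uniform inequality.
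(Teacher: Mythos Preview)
The paper does not prove this lemma; it is quoted from Proposition~11.19 of the cited monograph and stated without argument in Appendix~\ref{app:surrogate}. Your covering argument is the standard way to lift the pointwise martingale bound of Lemma~\ref{lem:s:conc-s} to a uniform one and is almost certainly what is done there; the arithmetic by which the $\Theta(d)$ log-cardinality of the net inflates $L$ to $dL$, and the choice of $\varrho$ as an inverse polynomial so that the Lipschitz residual is at most $1$, are both correct.

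One step should be tightened. You write that you apply Lemma~\ref{lem:s:conc-s} at the points of $\mathcal{N}\cap K_\tau(1/\sqrt{2dL})$ and take a union bound, but that intersection is random (it depends on $\mu_1,\Sigma_1,\ldots$) while the hypothesis of Lemma~\ref{lem:s:conc-s} asks for $z\in K_\tau(1/\sqrt{2L})$ \emph{almost surely}; a generic net point of $K$ need not satisfy this. The fix is to union-bound over the full deterministic net $\mathcal{N}$, invoking the pointwise martingale bound at each $z'\in\mathcal{N}$ with the $z'$-dependent stopping time $\tau\wedge\min\{t:z'\notin K_t(1/\sqrt{2L})\}$ in place of $\tau$. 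Then for any $z\in K_\tau(1/\sqrt{2dL})$ the nearest net point $z'$ automatically lies in $K_\tau(1/\sqrt{2L})$ --- this is precisely where the slack between the two radii is spent, using $\norm{\Sigma_t^{-1}}\leq 1/\delta$ from Lemma~\ref{lem:sigma} and $\varrho$ polynomially small --- so the stopping time collapses to $\tau$ and the full sum is controlled. With this adjustment your argument goes through.
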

Similar results hold for the quadratic surrogate approximations:
\begin{lemma}[Proposition 11.20, \citealt{L24}]\label{lem:s:conc-q}
Suppose that $z \in K_\tau(1/\sqrt{2L})$ almost surely.
Then, with probability at least $1 - \delta$,
\begin{align*}
\left| \sum_{t=1}^\tau (q_t(z) - \hat q_t(z))\right| \leq \frac{1}{\lambda} \left[\sqrt{\bar M_\tau L} + L \Ymax\right] \,.
\end{align*}
\end{lemma}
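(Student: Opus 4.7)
The plan is to establish this as a standard martingale concentration bound. By Lemma~\ref{lem:unbiased}, $\E_{t-1}[\hat q_t(z)] = q_t(z)$, so $D_t := \hat q_t(z) - q_t(z)$ forms a martingale difference sequence adapted to $(\sF_t)$. I would apply a Bernstein/Freedman-type inequality (in the spirit of Lemma~\ref{lem:conc}) to the stopped sum $\sum_{t=1}^\tau D_t$, which requires an almost-sure bound on $|D_t|$ together with a bound on the total conditional second moment $\sum_{t=1}^\tau \E_{t-1}[D_t^2]$.

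The first step is a clean decomposition of $D_t$ into a linear piece $L_t = \ip{\hat s'_t(\mu_t) - s'_t(\mu_t), z - \mu_t}$ and a quadratic piece $Q_t = \tfrac{1}{4}\norm{z-\mu_t}^2_{\hat s''_t(\mu_t) - s''_t(\mu_t)}$. Changing variables to $W_t := \Sigma_t^{-1/2}(X_t - \mu_t) \sim \cN(\zeros, \id)$ and $u_t := \Sigma_t^{-1/2}(z-\mu_t)$ reduces both pieces to polynomial expressions in the scalar $\ip{W_t, u_t}$ and $\norm{u_t}$, with prefactor $Y_t R_t(\mu_t)/(1-\lambda)^2$ for $L_t$ and $\lambda Y_t R_t(\mu_t)/(1-\lambda)^2$ for $Q_t$. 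The key quantitative input is the region hypothesis, which reads $\norm{u_t}^2 = \norm{z-\mu_t}_{\Sigma_t^{-1}}^2 \leq 1/(2L\lambda^2)$, together with $R_t(\mu_t) \leq 3$ from Lemma~\ref{lem:Rtsmall}.

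Using Gaussian moment identities $\E_{t-1}[\ip{W_t, u_t}^{2k}] = O(\norm{u_t}^{2k})$ and scalar subgaussian concentration $|\ip{W_t, u_t}| \lesssim \sqrt{L}\norm{u_t}$ on the high-probability event from Lemma~\ref{lem:gaussian-norm}, one obtains both the variance bounds
\begin{align*}
\E_{t-1}[L_t^2] + \E_{t-1}[Q_t^2] \lesssim \frac{\E_{t-1}[Y_t^2]}{\lambda^2}
\end{align*}
and the pointwise bound $|D_t| \lesssim L\Ymax/\lambda$ on this event. Summing the former over $t \leq \tau$ gives total conditional variance at most $\bar M_\tau/\lambda^2$ (up to absolute constants), and Freedman's inequality then yields, with probability at least $1-\delta$,
\begin{align*}
\left|\sum_{t=1}^\tau D_t\right| \lesssim \sqrt{L \cdot \bar M_\tau/\lambda^2} + \frac{L\Ymax}{\lambda} \lesssim \frac{1}{\lambda}\left[\sqrt{\bar M_\tau L} + L\Ymax\right],
\end{align*}
absorbing universal constants into the definition of $L$.

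The main obstacle is the variance bound for $Q_t$: the Hessian estimator $\hat s''_t(\mu_t)$ contains the quartic quantity $\ip{\Sigma_t^{-1}(X_t-\mu_t), z-\mu_t}^2$ whose second moment is $O(\norm{u_t}^4)$, and the $\lambda$ prefactor together with two applications of $\norm{u_t}^2 \lesssim 1/(L\lambda^2)$ just barely makes the quadratic piece contribute at the same order as the linear piece. A secondary technical point is the coupling between $Y_t$ and $X_t$: since $Y_t = \pip(X_t)[\ell_t(A_t) + \epsilon_t] + 2v(X_t)$ depends on $X_t$, separating off the conditionally independent noise $\epsilon_t$ and invoking the high-probability bound $\pip(X_t) \lesssim \sqrt{L}$ from Lemma~\ref{lem:Ybounds} is what allows the variance to be controlled by $\E_{t-1}[Y_t^2]$ rather than a higher moment.
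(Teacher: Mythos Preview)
The paper does not prove this lemma; it is quoted as Proposition~11.20 of~\cite{L24}, and the surrounding appendix explicitly defers all surrogate-concentration proofs to that monograph. Your martingale/Freedman outline is the natural argument and is correct in structure: the key inputs are the region constraint $\lambda^2\norm{u_t}^2 \leq 1/(2L)$, the bound $R_t(\mu_t)\leq 3$, and the observation that on the Gaussian event $|\ip{W_t,u_t}| \lesssim \sqrt{L}\,\norm{u_t}$ one has $|\hat q_t(z)| \lesssim |Y_t|/\lambda$, with the extra $\lambda$ in front of the Hessian estimator exactly cancelling the second application of the region bound.

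Two details need tightening. First, Freedman requires an almost-sure bound on $|D_t|$, not a high-probability one; you must either truncate at the Gaussian event $E_t$ and handle $\sum_t D_t\sind_{E_t^c}$ separately via crude moment bounds and the exponentially small $\bbP_{t-1}(E_t^c)$, or absorb the failure of $E_t$ into the stopping time. Second, your variance claim $\E_{t-1}[D_t^2]\lesssim \E_{t-1}[Y_t^2]/\lambda^2$ cannot be obtained by factoring $\E_{t-1}[Y_t^2\ip{W_t,u_t}^2]$, since $Y_t$ depends on $X_t$ through $\pip(X_t)$ and $v(X_t)$. The clean route is to use the pointwise bound $|\hat q_t(z)|\sind_{E_t}\lesssim |Y_t|/\lambda$ directly: from it $|q_t(z)| = |\E_{t-1}[\hat q_t(z)]| \lesssim \E_{t-1}[|Y_t|]/\lambda$ (the $E_t^c$ contribution being negligible), hence $|D_t|\sind_{E_t}\lesssim (|Y_t|+\E_{t-1}|Y_t|)/\lambda$, which squares and integrates to the desired variance bound without any decoupling. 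Your final paragraph's proposed workaround via isolating $\epsilon_t$ and bounding $\pip(X_t)$ is more indirect and, as stated, does not control the $v(X_t)$ contribution to $Y_t$.
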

And for the uniform bound:
\begin{lemma}[Proposition 11.20, \citealt{L24}]\label{lem:s:conc-q-uniform}
With probability at least $1 - \delta$,
\begin{align*}
\max_{z \in K_\tau(1/\sqrt{2L})} \left|\sum_{t=1}^\tau (q_t(z) - \hat q_t(z))\right| \leq \frac{L^2}{\lambda} \left[\sqrt{d \bar M_\tau} + d \Ymax\right]\,.
\end{align*}
\end{lemma}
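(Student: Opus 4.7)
The plan is to reduce the uniform bound over $K_\tau(1/\sqrt{2L})$ to the pointwise bound of Lemma~\ref{lem:s:conc-q} via a standard $\epsilon$-net argument, exploiting the fact that $F_\tau(z) := \sum_{t=1}^\tau (q_t(z) - \hat q_t(z))$ is a quadratic polynomial in $z$. Since $K_\tau(1/\sqrt{2L}) \subset K \subset \ball(2(d+1))$, the focus region has Euclidean diameter $O(d)$ and admits an $\epsilon$-net $N$ with $\log|N| = O(d \log(Cd/\epsilon))$.

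First I would apply Lemma~\ref{lem:s:conc-q} at each point of $N$ with failure probability $\delta/|N|$ and take a union bound. In the pointwise guarantee, the universal factor $L$ absorbs the extra $\log|N|$ term, so provided $\epsilon^{-1}$ is polynomial in $n,d,\Ymax,1/\lambda$, the effective concentration radius picks up at most a $\sqrt{d}\,L$ correction on the martingale term and a $dL$ correction on the deterministic $\Ymax$ term. These inflations exactly match the $\sqrt{d}$ and $d$ factors appearing in the target bound.

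Second, to pass from $N$ to all of $K_\tau(1/\sqrt{2L})$, I would establish a crude Lipschitz bound on $F_\tau$. Writing
\begin{align*}
F_\tau(z) - F_\tau(z')
&= \bigl\langle \textstyle\sum_t (s'_t(\mu_t) - \hat s'_t(\mu_t)), z - z' \bigr\rangle \\
&\quad + \tfrac{1}{4}\textstyle\sum_t (z + z' - 2\mu_t)^\top \bigl(s''_t(\mu_t) - \hat s''_t(\mu_t)\bigr)(z - z')\,,
\end{align*}
each summand is controlled using the explicit formulas for $\hat s'_t(\mu_t)$ and $\hat s''_t(\mu_t)$ together with $\|X_t - \mu_t\|_{\Sigma_t^{-1}} \leq \sqrt{dL}$ (event~\ref{event:gaussian}), $R_t(\mu_t) \leq 3$ (Lemma~\ref{lem:Rtsmall}), $|Y_t| \leq \Ymax$, and Lemma~\ref{lem:hess} for $s'_t, s''_t$. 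The resulting Lipschitz constant is polynomial in $n, d, \Ymax, 1/\lambda$, so choosing $\epsilon$ inverse-polynomial in these quantities makes the discretisation error negligible compared to the bound we are proving.

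The main obstacle is careful bookkeeping so that the additional $dL$ factor attached to the deterministic $\Ymax$ term does not contaminate the $\sqrt{d\bar M_\tau}$ term. The former should come purely from the worst-case pointwise bound on $\|\hat s''_t(\mu_t)\|$ (which scales as $\|X_t - \mu_t\|^2_{\Sigma_t^{-1}} \leq dL$), while the latter should arise only from the conditional second moment times the log-size of the net. Separating these cleanly requires splitting the underlying martingale into its predictable-variation and range contributions when invoking the Freedman-style inequality that already underlies the pointwise Lemma~\ref{lem:s:conc-q}, and then combining these splits with the net argument summand by summand rather than at the level of the aggregate $F_\tau$.
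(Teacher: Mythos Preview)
First, note that the paper does not supply its own proof of this lemma; it is simply quoted as Proposition~11.20 of \cite{L24}, so there is no in-paper argument to compare your proposal against.

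That said, your reduction has a genuine gap at the first step. The hypothesis of Lemma~\ref{lem:s:conc-q}, that $z \in K_\tau(1/\sqrt{2L})$ almost surely, is load-bearing: the conditional second moment and the range of $\hat q_t(z)-q_t(z)$ scale with $\|z-\mu_t\|_{\Sigma_t^{-1}}^2$ and $\|z-\mu_t\|_{\Sigma_t^{-1}}^4$, and these are only $O(1/\lambda^2)$ and $O(1/\lambda^4)$ when $z$ lies in the focus ellipsoid at every $t\le\tau$. But $K_\tau(1/\sqrt{2L})$ is a \emph{random}, data-dependent set. If $N$ is a deterministic $\epsilon$-net of $K$ or of $\ball(2(d+1))$, its points need not lie in $K_\tau(1/\sqrt{2L})$ on any given sample path; indeed for fixed $z\in K$ one can have $\|z-\mu_t\|_{\Sigma_t^{-1}}=\operatorname{poly}(1/\delta)$ since $\bar\Sigma_t^{-1}$ can have eigenvalues up to $1/\delta$ (Lemma~\ref{lem:sigma}\ref{lem:sigma:sinv}), and the Freedman bound behind Lemma~\ref{lem:s:conc-q} is then vacuous. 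If instead $N$ is an $\epsilon$-net of the random focus region, the net points are $\sF_\tau$-measurable and the martingale structure that the pointwise lemma relies on is destroyed. Either way you cannot invoke Lemma~\ref{lem:s:conc-q} at the points of $N$.

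The covering therefore has to be threaded \emph{inside} the concentration argument rather than wrapped around the pointwise statement: one writes the increment as a quadratic in the predictable local coordinate $u_t=\Sigma_t^{-1/2}(z-\mu_t)\in\ball\bigl(1/(\lambda\sqrt{2L})\bigr)$, whose coefficients depend only on $Y_t$, $R_t(\mu_t)\le 3$ and $\Sigma_t^{-1/2}(X_t-\mu_t)$, and builds the cover at that level so that the martingales being union-bounded are indexed by a \emph{deterministic} net of cardinality $\exp(O(d))$; this is what produces the extra $\sqrt d$ on $\sqrt{\bar M_\tau}$ and the extra $d$ on $\Ymax$. Your final paragraph gestures in this direction, but the scheme in your first two steps cannot be executed as written.
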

Finally, we need control of the Hessian estimates:

\newcommand{\sP}{\mathscr{P}}

\begin{lemma}[Proposition 11.21, \citealt{L24}]\label{lem:s:conc-Hs}
Given $x \in \R^d$, let
\begin{align*}
S_\tau(x) &= \sum_{t=1}^\tau \hat s_t''(x) &
\bar S_\tau(x) &= \sum_{t=1}^\tau s''_t(x)
\end{align*}
let $\sP$ be the (possibly random) set of positive definite matrices $\Sigma$ such that $\Sigma_t^{-1} \preceq \Sigma^{-1}$ for all $t \leq \tau$.
Then, with probability at least $1 - \delta$, for all $\Sigma^{-1} \in \sP$ and $x \in K_\tau(1/\sqrt{2dL})$,
\begin{align*}
\bar S_\tau(x) - \lambda L^2 \left[\sqrt{d \bar M_\tau} + d^2 \Ymax\right] \Sigma^{-1}
\preceq S_\tau(x) 
\preceq \bar S_\tau(x) + \lambda L^2\left[\sqrt{d \bar M_\tau} + d^2 \Ymax\right] \Sigma^{-1} \,. 
\end{align*}
\end{lemma}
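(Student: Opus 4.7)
The plan is to prove a pointwise (in a test vector $v$) scalar Freedman bound on $\sum_{t\leq\tau} v^\top(\hat s_t''(x) - s_t''(x))v$, expressed in terms of $\max_t v^\top\Sigma_t^{-1} v$, and then upgrade to the spectral PSD-order statement via an $\varepsilon$-net over $v$ and another over $x$. The observation that makes the uniformity in $\Sigma^{-1}\in\sP$ come for free is that the PSD-order bound $-c\Sigma^{-1}\preceq S_\tau(x)-\bar S_\tau(x)\preceq c\Sigma^{-1}$ is equivalent to $|v^\top(S_\tau-\bar S_\tau)v|\leq c\,v^\top\Sigma^{-1} v$ for every $v$, and by definition of $\sP$ we have $v^\top\Sigma^{-1} v \geq \max_t v^\top\Sigma_t^{-1} v$, so any scalar bound controlled by $\max_t v^\top\Sigma_t^{-1} v$ transfers immediately to every $\Sigma^{-1}\in\sP$.

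For fixed $v\in\sphere(1)$ and $x\in K_\tau(1/\sqrt{2dL})$, let $u_t = (X_t-\lambda x)/(1-\lambda) - \mu_t$, so that $\hat s_t''(x) = \frac{\lambda Y_t R_t(x)}{(1-\lambda)^2}(\Sigma_t^{-1} u_t u_t^\top\Sigma_t^{-1} - \Sigma_t^{-1})$. Cauchy--Schwarz gives
\begin{align*}
|v^\top\hat s_t''(x)v|\leq\frac{\lambda |Y_t|R_t(x)}{(1-\lambda)^2}\bigl(1 + \|u_t\|_{\Sigma_t^{-1}}^2\bigr)\,v^\top\Sigma_t^{-1} v,
\end{align*}
and on event \eventGaussian{} together with $\lambda\|x-\mu_t\|_{\Sigma_t^{-1}}\leq 1/\sqrt{2dL}$ and the bound $R_t(x)=O(1)$ (a generalization of Lemma~\ref{lem:Rtsmall} to points close to $\mu_t$), this yields $|v^\top\hat s_t''(x)v|\lesssim\lambda d L\,|Y_t|\cdot v^\top\Sigma_t^{-1} v$ almost surely, and a Gaussian fourth-moment computation produces $\E_{t-1}[(v^\top\hat s_t''(x)v)^2]\lesssim\lambda^2 d L \,\E_{t-1}[Y_t^2](v^\top\Sigma_t^{-1} v)^2$. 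Since $v^\top(\hat s_t''(x)-s_t''(x))v$ is a scalar martingale difference sequence (mean-zero by Lemma~\ref{lem:unbiased}), the stopping-time form of Freedman's inequality gives, with probability $\geq 1-\delta$,
\begin{align*}
\Bigl|\sum_{t\leq\tau} v^\top(\hat s_t''(x)-s_t''(x))v\Bigr| \lesssim \lambda L\sqrt{d\bar M_\tau}\,\max_t v^\top\Sigma_t^{-1} v + \lambda L^2 d\Ymax\,\max_t v^\top\Sigma_t^{-1} v.
\end{align*}

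Uniformity in $v$ and $x$ is then handled by discretization: take an $\varepsilon$-net of $\sphere(1)$ of log-cardinality $O(d)$ and an $\varepsilon$-net of $K_\tau(1/\sqrt{2dL})$ of log-cardinality $O(dL)$, union-bound the display above over both nets, and absorb the approximation error using the Lipschitz dependence of $\hat s_t''(x)$ and $s_t''(x)$ on both $v$ and $x$ (visible from the explicit formulae and the moment bounds on $X_t$). The combined log-net cost is absorbed into the $L^2$ prefactor and is the source of the $\sqrt{d}$ multiplying $\sqrt{\bar M_\tau}$ and of the $d^2$ multiplying $\Ymax$ in the statement; invoking $\max_t v^\top\Sigma_t^{-1} v\leq v^\top\Sigma^{-1} v$ for any $\Sigma^{-1}\in\sP$ then yields the claimed two-sided PSD bound uniformly in $\Sigma^{-1}$. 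The main obstacle in this plan is the conditional-variance computation: $(v^\top\hat s_t''(x)v)^2$ is a degree-four polynomial in the Gaussian $X_t$ weighted by $Y_t^2 R_t(x)^2$, and extracting the clean $\lambda^2 d\,\E_{t-1}[Y_t^2](v^\top\Sigma_t^{-1} v)^2$ scaling requires an Isserlis/Wick expansion together with tight control of $R_t(x)$ for $x\neq\mu_t$; a secondary nuisance is the stopping-time dependence of $\bar M_\tau$ and $\Ymax$, which forces either the adapted form of Freedman or a dyadic peeling argument over levels of $\bar M_\tau$ and $\Ymax$, and is where the remaining logarithmic factor is absorbed.
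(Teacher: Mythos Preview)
The paper does not supply its own proof of this lemma; it is imported verbatim from Proposition~11.21 of the monograph \cite{L24}. The argument there is precisely the one you outline---a scalar Freedman bound on $v^\top(S_\tau-\bar S_\tau)v$ followed by $\varepsilon$-nets over $v\in\sphere(1)$ and over $x$---and your observation that uniformity over $\Sigma^{-1}\in\sP$ is free (because the scalar bound is in terms of $\max_t v^\top\Sigma_t^{-1}v\leq v^\top\Sigma^{-1}v$) is the right one.

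One point to tighten: your conditional-variance claim
\[
\E_{t-1}\bigl[(v^\top\hat s_t''(x)v)^2\bigr]\lesssim\lambda^2 dL\,\E_{t-1}[Y_t^2]\,(v^\top\Sigma_t^{-1}v)^2
\]
hides the real work. An Isserlis/Wick expansion handles the Gaussian moments of $u_t$, but it does not by itself separate out the factor $\E_{t-1}[Y_t^2]$, because $Y_t=f_t(X_t)+\pip(X_t)\epsilon_t$ is a nonlinear function of the \emph{same} Gaussian $X_t$ that enters $R_t(x)$ and $B_t=(v^\top\Sigma_t^{-1}u_t)^2-v^\top\Sigma_t^{-1}v$; there is no independence to exploit. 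The device that makes the factorisation go through is that $R_t(x)$ is a Radon--Nikodym derivative, so one copy of $R_t(x)$ can be absorbed as a change of measure $X_t\mapsto(1-\lambda)X'_t+\lambda x$ with $X'_t\sim p_t$, after which the remaining $R_t(x)$ and the fourth-order Gaussian term are controlled directly. Done carefully this gives a variance prefactor that is $O(1)$ in $d$, not $dL$; the $dL$ you wrote would propagate to $d^{3/2}\sqrt{\bar M_\tau}$ after the net union bound, overshooting the stated $\sqrt{d\bar M_\tau}$. Your diagnosis of the secondary issues (data-dependent $\bar M_\tau,\Ymax$ handled by peeling, and $R_t(x)$ bounded for $x$ in the focus region) is correct.
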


\begin{lemma}[Proposition 11.22, \citealt{L24}]\label{lem:s:conc-H-sum}
Let $\sP$ be the same set as in Lemma~\ref{lem:s:conc-Hs} and
\begin{align*}
S_\tau &= \sum_{t=1}^\tau \hat s_t''(\mu_t) &
\bar S_\tau &= \sum_{t=1}^\tau s''_t(\mu_t) \,.
\end{align*}
Then, with probability at least $1 - \delta$, for all $\Sigma^{-1} \in \sP$,
\begin{align*}
\bar S_\tau - \lambda L^2 \left[\sqrt{d \bar M_\tau} + d^2 \Ymax\right] \Sigma^{-1} \preceq S_\tau
\preceq \bar S_\tau + \lambda L^2 \left[\sqrt{d \bar M_\tau} + d^2 \Ymax\right] \Sigma^{-1}\,.
\end{align*}
\end{lemma}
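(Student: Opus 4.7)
I would mirror the proof of Lemma~\ref{lem:s:conc-Hs} with one simplification: here the Hessians are already evaluated at the predictable iterates $\mu_t$, so no uniformity in $x$ is required. Set $Z_t = \hat s_t''(\mu_t) - s_t''(\mu_t)$. By Lemma~\ref{lem:unbiased}, $(Z_t)$ is a matrix-valued $(\sF_t)$-martingale difference sequence and the desired PSD bound is equivalent to showing, for every $\Sigma^{-1} \in \sP$,
\begin{equation*}
\bigl\|\Sigma^{1/2} \textstyle\sum_{t \le \tau} Z_t \, \Sigma^{1/2}\bigr\| \le \lambda L^2\bigl[\sqrt{d \bar M_\tau} + d^2 \Ymax\bigr]\,.
\end{equation*}
The plan is to apply a matrix Freedman inequality (Tropp-style) to $\tilde Z_t = \Sigma^{1/2} Z_t \Sigma^{1/2}$, engineering both the a.s.\ increment bound $R$ and the predictable variance bound $W^2$ to be independent of the particular $\Sigma$.

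The uniformity over $\sP$ is handled by a single observation: since $\Sigma^{-1}\in\sP$ gives $\Sigma \preceq \Sigma_t$, the set inclusion $\Sigma^{1/2}\ball(1) \subseteq \Sigma_t^{1/2}\ball(1)$ yields
\begin{equation*}
\|\Sigma^{1/2} M \Sigma^{1/2}\| = \sup_{v \in \Sigma^{1/2}\ball(1)} |v^\top M v| \le \sup_{v \in \Sigma_t^{1/2}\ball(1)} |v^\top M v| = \|\Sigma_t^{1/2} M \Sigma_t^{1/2}\|
\end{equation*}
for \emph{any} symmetric $M$. So every norm bound on $\tilde Z_t$ can be reduced to the corresponding bound with $\Sigma_t$ in place of $\Sigma$, which no longer depends on the particular $\Sigma$.

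For the increment, expand $\hat s_t''(\mu_t) = \tfrac{\lambda Y_t R_t(\mu_t)}{(1-\lambda)^2}\bigl(\Sigma_t^{-1} V_t V_t^\top \Sigma_t^{-1} - \Sigma_t^{-1}\bigr)$ with $V_t = (X_t-\mu_t)/(1-\lambda)$ and whiten via $U_t = (1-\lambda)\Sigma_t^{-1/2}V_t \sim \cN(\zeros,\id)$. On event~\eventGaussian{}, $\|U_t\|^2 = O(dL)$, so $\|\Sigma_t^{1/2}\hat s_t''(\mu_t)\Sigma_t^{1/2}\| = \tfrac{\lambda |Y_t| R_t(\mu_t)}{(1-\lambda)^2}\|U_tU_t^\top - \id\|$ is $O(\lambda d L \Ymax)$ using Lemma~\ref{lem:Rtsmall} and $|Y_t|\le\Ymax$; Lemma~\ref{lem:hess} supplies the matching bound on $s_t''(\mu_t)$. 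Thus $R = O(\lambda d L \Ymax)$. For the variance, $\E_{t-1}[\tilde Z_t^2] \preceq \E_{t-1}[\Sigma^{1/2} Z_t \Sigma_t Z_t \Sigma^{1/2}]$ (again using $\Sigma\preceq\Sigma_t$), and the inner expectation reduces via $U_t$ to a fourth-moment Gaussian calculation of order $\lambda^2 d \E_{t-1}[Y_t^2]$ times a matrix dominated by the identity; summing gives $W^2 = O(\lambda^2 d \bar M_\tau)$. Matrix Freedman at confidence $1-\delta$ then delivers $\|\textstyle\sum_t \tilde Z_t\| = O(W\sqrt{L} + RL)$, which matches the claimed bound after absorbing logarithmic factors into $L$ (with some slack in the stated powers of $L$ and $d$).

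The main obstacle is keeping the uniformity over $\Sigma^{-1}\in\sP$ honest. Its resolution is entirely the symmetric-norm monotonicity argument above: without it, one would be forced to take a union bound over an uncountable family of matrices, which does not work; with it, a single Freedman event certifies the bound simultaneously for every $\Sigma^{-1}\in\sP$.
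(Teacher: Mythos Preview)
The paper does not prove this lemma; it is quoted verbatim from \cite{L24} (Proposition~11.22 there), so there is no in-paper proof to compare against. I can only assess your outline on its own merits.

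Your monotonicity observation is correct and useful: for any symmetric $M$ and any $\Sigma\in\sP$ one has $\|\Sigma^{1/2}M\Sigma^{1/2}\|\le\|\Sigma_t^{1/2}M\Sigma_t^{1/2}\|$ for every $t\le\tau$, and this does give $\Sigma$-free bounds on the increment $R$ and the predictable variance $W^2$. The gap is the last sentence: ``a single Freedman event certifies the bound simultaneously for every $\Sigma^{-1}\in\sP$.'' Matrix Freedman is a statement about a \emph{fixed} martingale difference sequence. The sequence $\tilde Z_t=\Sigma^{1/2}Z_t\Sigma^{1/2}$ is a different martingale for each $\Sigma$, and the fact that the numerical bounds $R,W^2$ happen to be the same does not make the failure events $\{\|\sum_t\tilde Z_t\|>c\}$ coincide. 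One might hope to salvage this by applying Freedman once to some canonical $\Sigma_\ast$ and then transferring the conclusion via monotonicity, but the two requirements are incompatible: to control the increments you need $\Sigma_\ast\preceq\Sigma_t$ for all $t$ (i.e.\ $\Sigma_\ast\in\sP$), while to transfer the conclusion to every $\Sigma\in\sP$ you need $\Sigma_\ast$ to be the Loewner maximum of $\sP$, which need not exist. Picking $\Sigma_\ast=\Sigma_1=\sigma^2\id$ transfers the conclusion (since every $\Sigma\in\sP$ satisfies $\Sigma\preceq\Sigma_1$) but the per-step increment $\sigma^2\|Z_t\|$ now picks up $\|\Sigma_t^{-1}\|$ factors and is uncontrolled.

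A telltale sign that something beyond a single Freedman call is needed is the $d^2\Ymax$ term in the stated bound: your argument, if it worked, would deliver $O(\lambda dL\Ymax)$ from the increment, not $\lambda L^2 d^2\Ymax$. The extra factor of $d$ is exactly what a covering argument over directions produces (a net of size $\exp(O(d))$ adds $d$ to the log-term multiplying $R$). That is almost certainly how \cite{L24} handles the uniformity, and it is the missing ingredient here: reduce to scalar processes $v\mapsto v^\top Z_t v$ for $v$ in a fixed net of a deterministic ball containing $\bigcap_t\Sigma_t^{1/2}\ball(1)$, apply scalar Freedman with a predictable truncation (the constraint $\|v\|_{\Sigma_t^{-1}}\le 1$ is $\sF_{t-1}$-measurable), and union-bound over the net.
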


\section{Follow the regularised leader}

The following is standard:

\begin{theorem}\label{thm:ftrl2}
Suppose that $(\hat f_t)_{t=1}^n$ is a sequence of quadratic functions from $K$ to $\R$ and $(K_t)_{t=0}^n \subset K$ is decreasing and 
\begin{align*}
x_t = \argmin_{x \in K_{t-1}} \underbracket{\frac{\norm{x}^2}{2\sigma^2} + \eta \sum_{u=1}^{t-1} \hat f_u(x)}_{\Phi_{t-1}(x)} 
\quad \text{ and } \quad
\norm{\cdot}_{t\star} = \norm{\cdot}_{\Phi_t''^{-1}}\,.
\end{align*}
Suppose that $(\Phi_t)_{t=1}^n$ are convex. Then, for any $x \in K_n$,
\begin{align*}
\eta \sum_{t=1}^n \left(\hat f_t(x_t) - \hat f_t(x)\right) \leq
\frac{\norm{x}^2}{2 \sigma^2} +  2\eta^2 \sum_{t=1}^n \Vert \hat f_t'(x_t) \Vert^2_{t\star}\,,
\end{align*}
\end{theorem}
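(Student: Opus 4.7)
The plan is to combine a standard ``be-the-leader'' (BTL) induction with a per-round stability estimate that exploits the fact that every $\hat f_t$ is quadratic. Let me set $f_0(x) = \tfrac{1}{2\sigma^2}\norm{x}^2$ and $f_t(x) = \eta \hat f_t(x)$ so that $\Phi_t = \sum_{u=0}^t f_u$ and $x_{t+1} = \argmin_{x \in K_t} \Phi_t(x)$.

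First I would prove by induction on $n$ that $\sum_{t=0}^n f_t(x_{t+1}) \leq \sum_{t=0}^n f_t(x)$ for every $x \in K_n$. The base case ($n=0$) is just the definition of $x_1$. For the inductive step, apply the hypothesis at $x = x_{n+1}$ (which lies in $K_n \subseteq K_{n-1}$ by nesting) and then add $f_n(x_{n+1})$ to both sides, using the minimisation property of $x_{n+1}$. Dropping the nonnegative $f_0(x_1)$ and rearranging gives
\begin{equation*}
\eta \sum_{t=1}^n \left(\hat f_t(x_t) - \hat f_t(x)\right) \leq \frac{\norm{x}^2}{2\sigma^2} + \eta \sum_{t=1}^n \left(\hat f_t(x_t) - \hat f_t(x_{t+1})\right),
\end{equation*}
so the theorem reduces to controlling the stability terms $\eta(\hat f_t(x_t) - \hat f_t(x_{t+1}))$.

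For each such term I would use the identity $\eta \hat f_t = \Phi_t - \Phi_{t-1}$ to write $\eta(\hat f_t(x_t) - \hat f_t(x_{t+1})) = [\Phi_t(x_t) - \Phi_t(x_{t+1})] - [\Phi_{t-1}(x_t) - \Phi_{t-1}(x_{t+1})]$, and then drop the second bracket since it is $\leq 0$ (because $x_{t+1} \in K_t \subseteq K_{t-1}$ and $x_t$ minimises $\Phi_{t-1}$ on $K_{t-1}$). Since $\Phi_t$ is convex quadratic with Hessian $\Phi_t''$, a Taylor expansion around $x_t$ yields
\begin{equation*}
\Phi_t(x_t) - \Phi_t(x_{t+1}) = \bip{\Phi_t'(x_t),\, x_t - x_{t+1}} - \tfrac{1}{2}\norm{x_t - x_{t+1}}^2_{\Phi_t''}.
\end{equation*}
Splitting $\Phi_t'(x_t) = \Phi_{t-1}'(x_t) + \eta \hat f_t'(x_t)$, the first-order optimality condition for the constrained minimiser $x_t$ gives $\bip{\Phi_{t-1}'(x_t), x_{t+1} - x_t} \geq 0$ (using $x_{t+1} \in K_{t-1}$), so that inner product term is again $\leq 0$. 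Applying Young's inequality $\eta \bip{\hat f_t'(x_t), x_t - x_{t+1}} \leq \tfrac{\eta^2}{2}\norm{\hat f_t'(x_t)}^2_{t\star} + \tfrac{1}{2}\norm{x_t - x_{t+1}}^2_{\Phi_t''}$ cancels the quadratic penalty and leaves $\eta(\hat f_t(x_t) - \hat f_t(x_{t+1})) \leq \tfrac{\eta^2}{2}\norm{\hat f_t'(x_t)}^2_{t\star}$, which is in fact tighter than the $2\eta^2$ in the statement. Summing over $t$ and combining with the BTL bound finishes the proof.

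The only delicate point is the use of first-order optimality in the quadratic expansion step: one needs that $x_{t+1}$ is feasible for the minimisation defining $x_t$, i.e.\ $x_{t+1} \in K_{t-1}$. This is exactly where the nestedness hypothesis $K_t \subseteq K_{t-1}$ is used. Everything else is routine convex quadratic algebra, and the hypothesis that the $\Phi_t$ are convex ensures that $\Phi_t''$ is positive semidefinite so that the dual norm $\norm{\cdot}_{t\star}$ is well-defined.
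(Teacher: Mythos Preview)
Your BTL induction and the per-round quadratic analysis are both fine, but the step where you ``drop the second bracket'' goes the wrong way. You write
\[
\eta\bigl(\hat f_t(x_t)-\hat f_t(x_{t+1})\bigr)
=\bigl[\Phi_t(x_t)-\Phi_t(x_{t+1})\bigr]-\bigl[\Phi_{t-1}(x_t)-\Phi_{t-1}(x_{t+1})\bigr]
\]
and correctly observe that the second bracket is $\leq 0$ (since $x_t$ minimises $\Phi_{t-1}$ over $K_{t-1}\ni x_{t+1}$). But then \emph{subtracting} a nonpositive number only gives a \emph{lower} bound by the first bracket: you get $\eta(\hat f_t(x_t)-\hat f_t(x_{t+1}))\geq \Phi_t(x_t)-\Phi_t(x_{t+1})$, not $\leq$. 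So your subsequent bound on $\Phi_t(x_t)-\Phi_t(x_{t+1})$ (which is correct, and in fact sharper than the paper's) does not control the stability term you actually need. Since the $\hat f_t$ are not assumed convex, you cannot bound $\hat f_t(x_t)-\hat f_t(x_{t+1})$ directly via its own Taylor expansion either.

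The fix is to avoid the $\hat f_t$ stability terms altogether: the paper reaches the decomposition
\[
\eta\sum_{t=1}^n\bigl(\hat f_t(x_t)-\hat f_t(x)\bigr)
\leq \frac{\norm{x}^2}{2\sigma^2}+\sum_{t=1}^n\bigl(\Phi_t(x_t)-\Phi_t(x_{t+1})\bigr)
\]
by a direct telescoping (write $\eta\hat f_t(x_t)=\Phi_t(x_t)-\Phi_{t-1}(x_t)$, shift indices, and use $\Phi_n(x_{n+1})\leq\Phi_n(x)$). From here your Taylor-at-$x_t$ plus first-order-optimality plus Young argument applies verbatim and yields $\Phi_t(x_t)-\Phi_t(x_{t+1})\leq \tfrac{\eta^2}{2}\norm{\hat f_t'(x_t)}_{t\star}^2$, which is even tighter than the paper's constant $2$. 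So the repair is small, but as written the inequality direction is wrong and the proof does not close.
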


\begin{proof}
Let $R(x) = \frac{1}{2\sigma^2} \norm{x}^2$.
By the definition of $\Phi_t$,
\begin{align*}
\sum_{t=1}^n \left(\hat f_t(x_t) - \hat f_t(x)\right) 
&= \frac{1}{\eta} \sum_{t=1}^n \left(\Phi_{t}(x_t) - \Phi_{t-1}(x_t)\right) - \frac{\Phi_{n}(x)}{\eta} + \frac{R(x)}{\eta} \\
&= \frac{1}{\eta}\sum_{t=1}^n \left(\Phi_{t}(x_t) - \Phi_{t}(x_{t+1})\right) + \frac{\Phi_{n}(x_{n+1})}{\eta} - \frac{\Phi_{n}(x)}{\eta} + \frac{R(x) - R(x_1)}{\eta} \\
\tag*{$\Phi_n(x_{n+1}) \leq \Phi_n(x)$}
&\leq \frac{1}{\eta} \sum_{t=1}^n \left(\Phi_{t}(x_t) - \Phi_{t}(x_{t+1})\right) + \frac{R(x) - R(x_1)}{\eta} \\
&\leq 2\eta \sum_{t=1}^n \Vert \hat f'_t(x_t)\Vert^2_{t\star} + \frac{\norm{x}^2}{2 \eta \sigma^2} \,. 
\end{align*}
where the final inequality needs to be justified. 
The difference in objectives can be bounded in two ways using that $\Phi_t$ is convex
and only consists of quadratics. Since $\Phi_t$ is quadratic, its second derivative is constant, which we denote by $\Phi''_t$ and let
$\norm{\cdot}_t = \norm{\cdot}_{\Phi_t''}$.
First, 
\begin{align*}
    \Phi_t(x_t) - \Phi_t(x_{t+1}) 
    &= \ip{\Phi'_t(x_{t+1}), x_t - x_{t+1}} 
        + \frac{1}{2}\norm{x_t - x_{t+1}}^2_{t} \\
    &\geq\frac{1}{2}\norm{x_t - x_{t+1}}^2_{t}\,.
        \tag*{First order optimality}
\end{align*}
On the other hand,
\begin{align*}
    \Phi_t(x_t) - \Phi_t(x_{t+1})
    & \leq \ip{\Phi_t'(x_t), x_t -x_{t+1}}
    \tag*{Convexity}\\
    & = \sip{\eta \hat f'_t(x_t) + \Phi_{t-1}'(x_t), x_t -x_{t+1}}\\
    & \leq \sip{\eta \hat f'_t(x_t), x_t -x_{t+1}} 
    \tag*{First order optimality}\\
    \tag*{Cauchy-Schwarz}
    & \leq \eta \Vert \hat f'_t(x_t)\Vert_{t\star} \norm{x_{t+1} -x_t}_{t} \,.
\end{align*}
Combining both gives
\begin{align*}
    (\Phi_t(x_t) - \Phi_t(x_{t+1}))^2
    & \leq \eta^2\Vert \hat f'_t(x_t)\Vert^2_{t\star} \norm{x_t - x_{t+1}}^2_{t}
     \leq 2\eta^2\Vert \hat f'_t(x_t)\Vert^2_{t\star} (\Phi_t(x_t) - \Phi_t(x_{t+1}))
\end{align*}
Dividing both sides by $\Phi_t(x_t) - \Phi_t(x_{t+1})$ gives the result.
\end{proof}

\section{Optimising the surrogate estimate}\label{app:opt_quad}

In the restart condition of the adversarial version of the algorithm we need to
optimise a possibly non-convex function. This problem can be circumvented 
by adding a quadratic term that ensures that the objective becomes convex again. 
Here, we  show that that is possible and that the overhead of this new objective poses no problem for the restart condition. 

For $s \leq t \leq \tau$ we can bound each covariance matrix as in the proof of
Lemma~\ref{lem:cov}
\begin{align*}
    \Sigma^{-1}_s  
    \preceq 2 \bar \Sigma^{-1}_s 
    \preceq 4  \bar \Sigma^{-1}_{t} 
    \preceq 8 \Sigma^{-1}_{t}\,.
\end{align*}
Lemma~\ref{lem:s:conc-Hs} and $t \leq \tau$ then gives us for all $x \in K_t$ that
\begin{align*}
    \sum_{i=1}^{t}\hat s''(x) 
    + \lambda L^2\left[ \sqrt{d \bar M_\tau}  + d^2 \Ymax \right]   8\Sigma^{-1}_t 
    \succeq \zeros
\end{align*}
We can simplify this a bit by noticing
\begin{align*}
    \lambda L^2\left[ \sqrt{d \bar M_\tau}  + d^2 \Ymax \right] 
    \le \lambda L^2\left[ 10L \sqrt{nd} + \frac{d^2 10 L }{\epsilon}  \right]  
    \le 20 \lambda L^{3}\sqrt{nd} 
.\end{align*}
In particular, the function
\begin{align*}
    \sum_{i=1}^{t} \hat s_i(x) + 160\lambda L^{3}\sqrt{nd}\norm{x - \mu_{t}}^2_{\Sigma^{-1}_{t} } 
    =:
    \sum_{i=1}^{t} \hat s_i(x) + Q_t(x)
\end{align*}
is convex on $K_t$. We immediately get
\begin{align}\label{eq:approx_opt_upper_bound}
    \sum_{i=1}^t \hat s_i(\mu_i) 
      - \min_{x \in K_{t}} \left [\sum_{i=1}^t \hat s_i(x) + Q_t(x) \right]
    \leq
    \sum_{i=1}^t \hat s_i(\mu_i) 
        - \min_{x \in K_{t}}\sum_{i=1}^t \hat s_i(x) 
,\end{align}
Because $Q_t(x)$ is positive. We can also bound the original optimum of the non-convex objective as follows. 
\begin{align*}
    \sum_{i=1}^t \hat s_i(\mu_i) - \sum_{i=1}^t \hat s_i(\opthatst)
    & =
    \sum_{i=1}^t \hat s_i(\mu_i) - \sum_{i=1}^t \hat s_i(\opthatst) - Q_t(\opthatst) +  Q_t(\opthatst)\\
    & \leq \sum_{i=1}^t \hat s_i(\mu_i) 
        - \left[\sum_{i=1}^t \hat s_i(\opthatst) + Q_t(\opthatst) \right ] 
        +  160\lambda L^{3} \sqrt{nd} \Fmax\\
    & \leq \sum_{i=1}^t \hat s_i(\mu_i) 
        - \min_{x \in K_{t}} \left [\sum_{i=1}^t \hat s_i(x) + Q_t(x) \right]
        + 160 \lambda L^{3}\sqrt{nd} \Fmax
\end{align*}
Where have used that $\norm{x - \mu_t}^2_{t} \leq \Fmax$ for all $x \in
K_{t}$. With the parameter settings we have
\begin{align*}
    \eta\lambda L^3\sqrt{nd}\Fmax = \frac{\Fmax}{d^2 L^{2.5}} \leq \gamma \Fmax
\end{align*}
The new restart condition becomes
\begin{align*}
    \max_{y \in K_{t}} \eta 
    \sum_{i=1}^t(\hat s_i(\mu_i) - \hat s_i(y) - Q_t(y)) 
    \leq -\left(\frac{160\Fmax}{d^2L^{2.5}} + \frac{\gamma\Fmax}{32}\right)
\end{align*}
A restart should still be triggered when $x^s_{\star,\tau} \not\in K_\tau$. 
This can be achieved by adjusting the original restart condition 
to $\frac{-\gamma \Fmax}{16}$, retuning the parameters and constants such that 
$\frac{dL}{\lambda} + 2\eta C_\tau + 8 \eta L \le \frac{\gamma \Fmax}{16}$ and Lemma~\ref{lem:bonus}
instead gives $\sum_{i=1}^{\tau}\flat_i(x) \le  -\frac{\gamma \Fmax}{8}$. 
By repeating the derivation in Step~$4$  and \eqref{eq:approx_opt_upper_bound} we
get 
\begin{align*}
\max_{x \in K_{t}}\eta \sum_{t=1}^\tau \left(\hat s_t(\mu_t) 
    - \hat s_t(x) - Q_\tau(x)\right) 
&\leq \max_{x \in K_{t}}\eta \sum_{t=1}^\tau \left(\hat s_t(\mu_t) 
    - \hat s_t(x) \right)  \\
&\leq - \frac{\gamma \Fmax}{16} 
\le -\left(\frac{160\Fmax}{d^2L^{2.5}} + \frac{\gamma\Fmax}{32}\right)
\end{align*}
When a restart is triggered, the regret is still negative
\begin{align*}
\eta \fReg_{\tau}(x_{\star, \tau}) 
& \leq \eta \sum_{t=1}^\tau 
    \left(\hat s_t(\mu_t) - \hat s_t(x^{\hat s}_{\star,\tau})\right) 
    + \frac{d L}{\lambda} 
    + 2 \eta C_\tau  
    + 8\eta L  
    \tag*{Repeat of Step 5}
    \\
& \leq \eta \sum_{t=1}^\tau \left(\hat s_t(\mu_t) 
    - \min_{x \in K_{t}} \left [\sum_{t=1}^\tau \hat s_t(x) + Q_\tau(x) \right]\right) 
    + \eta8\lambda \sqrt{nd} \Fmax
    + \frac{d L}{\lambda} + 2 \eta C_\tau \\
&\leq -\left(\frac{160\Fmax}{d^2L^{2.5}} + \frac{\gamma\Fmax}{32}\right) 
    + \eta 160 \lambda \sqrt{nd}\Fmax 
    + \frac{d L}{\lambda} + 2 \eta C_\tau + 8 \eta L
\tag*{Approximate Restart condition}\\
&\leq  \frac{d L}{\lambda} + 2 \eta C_\tau  + 8 \eta L - \frac{\gamma\Fmax}{16} \\
\constantCheck
&\leq 0\,.
\end{align*}

\end{document}